\newtheorem{theorem}{Theorem}[section]
\newtheorem{lemma}{Lemma}[section]
\newtheorem{corollary}{Corollary}[section]
\newtheorem{definition}{Definition}[section]
\newtheorem{remark}{\textbf{Remark}}[section]
\begin{document}

\noindent This is a pre-print version of the article \textit{Sharp Estimates for the Principal Eigenvalue of the $p-$Operator} which appeared in Calculus of Variations and Partial Differential Equations 57:49 on April 18th 2018, 
{\centering \url{https://link.springer.com/article/10.1007/s00526-018-1331-0}}.
\\
\title{Sharp Estimates for the Principal Eigenvalue of the $p-$Operator}
\author{Thomas Koerber}
\address{	Albert-Ludwigs-Universit\"at Freiburg, Mathematisches Institut, Eckerstr. 1, D-79104 Freiburg, Germany \\
	Tel.: +49-761-203-5614}
	\email{thomas.koerber@math.uni-freiburg.de}

\begin{abstract}
 Given an elliptic diffusion operator $L$ defined on a compact and connected manifold (possibly with a convex boundary in a suitable sense) with an $L$-invariant measure $m$, we introduce the non-linear $p-$operator $L_p$, generalizing the notion of the $p-$Laplacian. Using techniques of the intrinsic $\Gamma_2$-calculus, we prove the sharp estimate $\lambda\geq (p-1)\pi_p^p/D^p$ for the principal eigenvalue of $L_p$ with Neumann boundary conditions under the assumption that $L$ satisfies the curvature-dimension condition BE$(0,N)$ for some $N\in[1,\infty)$. Here, $D$ denotes the intrinsic diameter of $L$. Equality holds if and only if $L$ satisfies BE$(0,1)$. We also derive the lower bound $\pi^2/D^2+a/2$ for the real part of the principal eigenvalue of a non-symmetric operator $L=\Delta_g+X\cdot\nabla$ satisfying $\operatorname{BE}(a,\infty)$.   
\end{abstract}
\date{}
\maketitle

\medskip

\section{Introduction}
\label{introduction}
Estimating the principal eigenvalue of second-order operators is an important topic for various reasons. In numerical analysis, the principal eigenvalue corresponds to the convergence rate of numerical schemes, and good estimates can lead to an optimization of these schemes. On the other hand, the first eigenvalue often corresponds to the optimal constant of Poincar\'e-type inequalities. While a good understanding of such constants is important for numerical purposes, it can be useful for purely mathematical reasons too (see for instance the solution of the Yamabe problem \cite{yamabe}). Finally, in quantum mechanics, the principal eigenvalue describes the energy of a particle in the ground state (see for instance \cite{cohen}), whereas in thermodynamics, it gives  a lower bound on the decay rate of certain heat flows (see \cite{widder}).\\
 \indent Given its physical and mathematical importance, the first eigenvalue of the Laplace-Beltrami operator on compact Riemannian manifolds with Neumann boundary conditions has been studied in various articles: one of the first remarkable results was \cite{payne} in 1960, where Payne and Weinberger showed the sharp estimate $\lambda\geq\pi^2/D^2$ for the first eigenvalue of the Laplacian defined on a convex and bounded open subset of $\mathbb{R}^n$ with diameter $D$. In 1970, by relating the principal eigenvalue of $L$ to the isoperimetric constant of a Riemannian manifold, Cheeger showed in \cite{cheeger} that the principal eigenvalue can be estimated by a quantity depending only on the diameter, the Ricci-curvature and the dimension.  In 1980, assuming non-negative Ricci-curvature, Li and Yau (\cite{li}) showed the estimate $\lambda\geq\pi^2/4D^2$ for any compact Riemannian manifold, possibly with  convex boundary, using a gradient estimate technique. In 1984, Zhong and Yang finally derived the sharp estimate 
 $
 \lambda\geq\pi^2/D^2
 $
 using a barrier argument (see \cite{Yang}). Afterwards, Chen and Wang (\cite{chen}), and also Kr\"oger (\cite{kroger}), recovered this result independently by comparing the principal eigenvalue to the first eigenvalue of a one-dimensional model space. While Chen and Wang used a variational formula, Kr\"oger used a gradient comparison technique. Meanwhile, more general linear elliptic operators have been studied, and in \cite{emery}, Bakry and Emery introduced intrinsic objects like a generalized metric $\Gamma$, a Hessian $H$, a diameter $D$, and Ricci curvature $R$ related to a so-called diffusion operator $L$. They also introduced a curvature dimension condition solely depending on $L$, which is now known as $\operatorname{BE}(\kappa,N)$, where $\kappa$ is a lower bound for the curvature and $N$ an upper bound for the dimension. In general, $N$ does not coincide with the topological dimension of the manifold. In the year 2000, Bakry and Qian used techniques similar to \cite{kroger} to obtain the sharp estimate $\lambda\geq\pi^2/D^2$ for the principal eigenvalue of $L$ assuming the condition $\operatorname{BE}(0,N)$ for some $N\in[1,\infty)$ and ellipticity (see (\cite{Bakry1}). Thereby, they also obtained sharp results for positive or negative lower bounds on the Ricci-curvature. It is worth remarking that positive bounds are a lot easier to deal with and that Lichnerowicz already obtained the sharp estimate $\lambda\geq \kappa N$ in 1958, where $\kappa>0$ is a lower bound for the Ricci-curvature and $N$ the dimension of the manifold (see \cite{lich}).\\ 
 \indent Recently, the attention has turned towards non-linear operators, especially the so-called $p$-Laplacian whose applications range from the description of non-Newtonian fluids (see \cite{newtonian}) to non-linear elasticity problems (\cite{elasticity}). Remarkably, the principal eigenvalue of the $p$-Laplacian could also be linked to the Ricci flow (see \cite{wu}). In 2003, Kawai and Nakauchi showed the lower bound $\lambda\geq \frac{1}{p-1}\frac{\pi_p^p}{(4D)^p}$ for the principal eigenvalue of the $p-$Laplacian with Neumann boundary conditions assuming non-negative Ricci-curvature and $p>2$. Here, $\pi_p$ denotes one half of the period of the so-called $p$-sine function. Similarly to \cite{Bakry1}, they used a gradient comparison technique which they proved with a maximum principle involving the $p$-Laplacian. In 2007, Zhang improved this result to $\lambda\geq (p-1)\pi_p^p/(2D)^p$ for any $p>1$, assuming non-negative Ricci-curvature and at least one point with positive Ricci-curvature. In 2012, assuming non-negative Ricci-curvature, Valtorta finally obtained the sharp estimate $\lambda \geq (p-1)\pi^p_p/D^p$ valid for the first eigenvalue of the $p-$Laplacian for any $p\in(1,\infty)$. The first improvement in his proof was a generalization of the celebrated Bochner formula to the linearization of the $p$-Laplacian, which yielded an improved gradient comparison with a one-dimensional model space using a maximum principle argument. The second improvement was a better estimate for the maximum of the eigenfunction, motivated by the techniques in \cite{Bakry1}. In \cite{val2}, these results were extended to negative lower bounds for the Ricci-curvature using similar techniques with a slightly more complicated model space. \\ 
 \indent In this paper, we combine the approaches of \cite{Bakry1} and \cite{val1} and define a non-linear $p$-operator $L_p$, which arises from a generic elliptic diffusion operator $L$ defined on a compact differentiable manifold which is allowed to have a convex boundary (in a suitable sense). More precisely, we define $L_pu:=\Gamma(u)^\frac{p-2}{2}(Lu+(p-2)H_u(u,u)/\Gamma(u))$, where $\Gamma$ is the so-called Carr\'e du Champ operator, which can be seen as a metric on $T^*M$ induced by $L$. Using intrinsic objects similar to \cite{Bakry1} and constraints which solely depend on the operator $L$, we generalize the approach by \cite{val1}. In particular, we prove the following theorem: 
 \begin{theorem}
 	Let $M$ be a a compact and connected smooth manifold with an elliptic diffusion operator $L$ with a smooth and L-invariant measure $m$ and let $\lambda$ be the principal Neumann-eigenvalue   of the $p-$operator $L_pu:=\Gamma(u)^\frac{p-2}{2}(Lu+(p-2)H_u(u,u)/\Gamma(u))$. If $L$ satisfies $\operatorname{BE}(0,N)$ for some $N\in[1,\infty)$ and if the boundary of $M$ is  either empty or  convex,  then the sharp estimate
 	\begin{align*}
 	\lambda\geq (p-1)\frac{\pi_p^p}{D^p}
 	\end{align*}
 	holds, where $D$ is the diameter associated with the intrinsic metric $d$. Equality holds if and only if $L$ satisfies BE$(0,1)$.
 	\label{maintheorem1}
 \end{theorem}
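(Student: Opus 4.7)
The plan is to adapt Valtorta's gradient comparison method \cite{val1} to the intrinsic $\Gamma_2$-setting of diffusion operators satisfying BE$(0,N)$, blending it with the techniques of Bakry-Qian \cite{Bakry1}. Let $u$ be an eigenfunction for $\lambda$, normalized so that $\min_M u = -1$ and $k := \max_M u \in (0,1]$. First I would introduce a one-dimensional model eigenvalue problem on an interval $[a,b]$,
\[
\bigl((p-1)|v'|^{p-2} v'\bigr)' = -\lambda\,|v|^{p-2} v, \qquad v(a)=-1,\ v(b)=k,\ v'(a)=v'(b)=0,
\]
whose solution is given explicitly in terms of the $p$-sine and whose length $b-a$ is a monotone function of $\lambda$ which at $k=1$ equals $\pi_p((p-1)/\lambda)^{1/p}$. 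The scheme is to derive a pointwise comparison $\Gamma(u)^{1/2}(x)\leq v'(v^{-1}(u(x)))$ globally on $M$, integrate along an intrinsic minimizing curve between a minimum and a maximum point of $u$ to get $b-a \leq D$, and finally appeal to monotonicity in $\lambda$ to conclude the stated lower bound.

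The analytic engine is a Bochner-type inequality for $L_p$. Expanding $L\Gamma(u)$ via the $\Gamma_2$-calculus and exploiting the $(p-2)H_u(u,u)/\Gamma(u)$ correction built into $L_p$ to perform a Cauchy-Schwarz decomposition of the Hessian $H_u$ into its $du$-direction and the orthogonal complement, one should obtain, under BE$(0,N)$, a pointwise inequality bounding $L(\Gamma(u)^{p/2})$ from below by terms of the schematic form $\Gamma(L_p u,u)\Gamma(u)^{(p-2)/2}$ together with an $N^{-1}$-dimensional quadratic term in $L_p u$. Plugging in $L_p u = -\lambda|u|^{p-2}u$ and introducing the auxiliary function $F:=\Gamma(u)^{p/2}-\Psi(u)$, where $\Psi$ is the first integral of the one-dimensional model so that $\Psi(v) = |v'|^p$, the Bochner inequality translates into an elliptic differential inequality for $F$ that is compatible with the maximum principle. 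A careful maximum-principle argument -- with regularization to cope with the degeneracy of $L_p$ at critical points of $u$ -- then yields $F \leq 0$ on $M$, which is exactly the desired pointwise gradient comparison. The convexity of $\partial M$ combined with the Neumann condition $\Gamma(u,\nu)=0$ is precisely what prevents the maximum of $F$ from being attained on the boundary, via the standard observation that convex boundaries imply $\partial_\nu \Gamma(u)\leq 0$.

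The remaining steps are to sharpen the normalization and to establish rigidity. Following \cite{Bakry1}, BE$(0,N)$ supplies an auxiliary inequality that, combined with the gradient comparison and a suitable choice of parameters in the one-dimensional model, effectively reduces matters to the symmetric case $k=1$, after which the integrated gradient comparison gives $\pi_p((p-1)/\lambda)^{1/p} \leq D$, i.e.\ the claimed bound. For the equality case $\lambda = (p-1)\pi_p^p/D^p$, every inequality above must saturate: in particular the Cauchy-Schwarz used in the Bochner step forces the trace-free part of $H_u$ relative to $du$ to vanish pointwise, which together with the characterization of BE$(0,1)$ in the $\Gamma_2$-framework yields the rigidity statement. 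The hardest technical step, I expect, is the Bochner-type inequality itself: extracting from BE$(0,N)$ the correct dimensional Cauchy-Schwarz once the $L_p$-correction has been algebraically absorbed is delicate, and getting the precise form of the lower-order terms -- not just their existence -- is what makes the constant $(p-1)\pi_p^p/D^p$ sharp rather than off by a dimensional factor.
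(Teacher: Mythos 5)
Your outline captures the broad shape of the argument (Bochner identity, maximum-principle gradient comparison, boundary term killed by convexity, rigidity from saturated Cauchy--Schwarz), but there is a structural gap at the step you describe as ``effectively reduces matters to the symmetric case $k=1$,'' and it infects your model-ODE setup as well.

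First, the one-dimensional model you write down is overdetermined when $k<1$. The first integral of $\bigl((p-1)|v'|^{p-2}v'\bigr)'=-\lambda|v|^{p-2}v$ is $(p-1)|v'|^p+\lambda|v|^p=\text{const}$; imposing $v'(a)=v'(b)=0$, $v(a)=-1$, $v(b)=k$ forces $|k|=1$. So there is no family of $T\equiv 0$ model solutions interpolating $k\in(0,1]$, and the gradient comparison $\Gamma(u)^{1/2}\le v'(v^{-1}(u))$ cannot be run against such a $v$ when $\max u<1$. The paper resolves this by working with the \emph{relaxed} model
\[
\frac{\partial}{\partial t}\bigl(w'|w'|^{p-2}\bigr)-T\,w'|w'|^{p-2}+\lambda w|w|^{p-2}=0,\qquad T=-\frac{n-1}{t},\ n\ge N,
\]
whose solutions $w_a$ (parameterized by the left endpoint $a\ge 0$) have maxima $m(a)<1$ sweeping out $[m(0),1)$, and whose $T$-term is exactly what the $N$-dimensional Cauchy--Schwarz in the self-improved BE$(0,N)$ inequality produces. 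Your sketch omits this damping term, and therefore omits the single most important new input from the curvature-dimension condition.

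Second, even granting a family of admissible models with $m(a)<1$, the crucial step is the \emph{maximum comparison} $\max u\ge m(0)$, which guarantees that some $w_a$ actually matches the oscillation of $u$. This is not ``an auxiliary inequality'' from BE$(0,N)$ in the sense you invoke; in the paper (following Valtorta) it is proved through a measure-comparison argument: one pushes forward $m$ by $w_a^{-1}\circ u$, proves a monotonicity statement for the resulting density against $t^{n-1}dt$, deduces $m(B_r(x_0))\le c\,r^{n}$ near the minimum point, and derives a contradiction with the Bishop--Gromov lower bound $m(B_r(x_0))\gtrsim r^{N}$ coming from BE$(0,N)$. Your proposal does not contain this volume-comparison mechanism, and without it the argument cannot close (except in the special case $\max u=1$). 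A smaller but related issue: the test function in the maximum-principle step needs a multiplicative weight, $F=\psi(u)\bigl(\Gamma(u)^{p/2}-\phi(u)\bigr)$ with $\psi$ chosen through an auxiliary ODE, in order to make the coefficient of $F$ in the resulting differential inequality have a favorable sign; the unweighted $F=\Gamma(u)^{p/2}-\Psi(u)$ you propose will generically not yield $\mathcal{L}^u_p(F)\le 0\Rightarrow F\le 0$.
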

 \indent We emphasize that the constraints are satisfied by a much larger class than the Laplace-Beltrami operators. For instance, every operator, which satisfies the condition BE$(\kappa,N_0)$ for some $\kappa>0$ and $N_0<\infty$, satisfies BE$(0,N)$ for some large $N>N_0$ if $M$ is compact. In particular, our result applies to certain Bakry-Emery Laplacians in the form of $L=\Delta_g+\nabla\phi\cdot\nabla $.\\
 The rest of this paper is organized as follows:
 in section 2, we briefly review the definitions of $\Gamma_2$-calculus introduced by Bakry, Emery, and Ledoux (\cite{emery}, \cite{bakry2}), define the $p$-operator and discuss the existence of the first eigenvalue. In section 3, we prove Theorem 1.1. More precisely, we prove a generalized $p$-Bochner formula for the linearization $\mathcal{L}^u_p$ of $L_p$ and use a self-improvement property of the $\operatorname{BE}(0,N)$ condition similar to \cite{Bakry1} to obtain a good estimate for $\mathcal{L}^u_p(\Gamma(u)^\frac{p}{2})$. Next, we use a maximum principle argument in the fashion of \cite{val1} to compare the gradient of the eigenfunction with a suitable one-dimensional model space. This also yields a maximum comparison similar to \cite{val1} and allows us to obtain a sharp estimate for the principal eigenvalue. In the case $N=\infty$ the maximum comparison breaks down and we obtain a weaker estimate, which we expect not to be sharp.  We will also address the question of equality: While the estimate is sharp regardless of the dimension, equality can only be attained if $\dim(M)=1$ and $L=\Delta_g$ for some Riemannian metric $g$. In particular, $L$ must satisfy the condition BE$(0,1)$, which is in line with the results obtained by Hang and Wang in \cite{hang} and Valtorta in \cite{val1}.  Similarly to the $p$-Laplacian, we expect the introduced $p$-operator to be useful to model various non-linear problems in physics. \\
 \indent In section 4, we turn our attention towards non-symmetric operators of the form $L=\Delta_g+X\cdot\nabla$ for some Riemannian metric $g$, that is, operators that might not possess an invariant measure. Non-symmetric operators are important in quantum mechanics (see \cite{nonqm}) and can be used to describe damped oscillators.  It is a well known fact that such operators have a discrete and typically complex spectrum.
 Only recently, in \cite{andrews}, Andrews and Ni proved a lower bound for the first eigenvalue of the so-called Bakry-Emery Laplacian, which is symmetric with respect to a conformal measure. The main ingredient in their proof is a comparison theorem for the modulus of continuity of solutions of the heat equation with drift, which is a variation of the argument in the celebrated proof of the fundamental gap conjecture by Andrews and Clutterbuck (see \cite{andrews2}). In 2015, Wolfson proved a generalized fundamental gap conjecture for non-symmetric Schr\"odinger operators. In the same spirit, we generalize the results obtained in \cite{andrews} to non-symmetric operators; that is, we do not require the first order part to be the gradient of a function. Here, we restrict ourselves to the linear case $p=2$. More precisely, we will prove the following theorem:
 \begin{theorem}
 	Let $(M,g)$ be a compact and connected Riemannian manifold, possibly with a strictly convex boundary together with a non-symmetric diffusion operator $L=\Delta_g+X\cdot\nabla$ satisfying $\operatorname{BE}(a,\infty)$. Let $\lambda$ be a non-zero eigenvalue of $L$ with Neumann Boundary conditions. Then one has the  estimate
 	$$
 	\operatorname{Re}(\lambda)\geq \frac{\pi^2}{D^2}+\frac{a}{2},
 	$$
 	where $D$ is the diameter of the Riemannian manifold.
 	\label{maintheorem2}
 \end{theorem}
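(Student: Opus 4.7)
The plan is to extend the modulus-of-continuity comparison of Andrews and Ni \cite{andrews} from the symmetric Bakry-Emery setting to the non-symmetric operator $L=\Delta_g+X\cdot\nabla$. Let $\lambda=\alpha+i\beta$ be a non-zero eigenvalue with a (possibly complex) eigenfunction $\varphi$ normalized by $L\varphi=-\lambda\varphi$ and $\max_M|\varphi|=1$, and consider the complex heat evolution $U(x,t):=e^{-\lambda t}\varphi(x)$, which satisfies $\partial_t U=LU$ on $M\times[0,\infty)$ with Neumann boundary and $|U(x,t)-U(y,t)|=e^{-\alpha t}|\varphi(x)-\varphi(y)|$.

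First I would introduce the two-point modulus of continuity
\[
\omega(s,t):=\tfrac{1}{2}\sup\bigl\{|U(x,t)-U(y,t)|:x,y\in M,\ d(x,y)\leq 2s\bigr\},\qquad s\in[0,D/2],
\]
and prove that $\omega$ is a viscosity subsolution of the one-dimensional drift equation associated with the model space for $\operatorname{BE}(a,\infty)$ on $[-D/2,D/2]$. The heart of the argument is a two-point maximum principle on $M\times M$: at a touching point $(x_0,y_0,t_0)$ with $d(x_0,y_0)=2s_0>0$, choose a phase $\theta_0\in\mathbb{R}$ such that $e^{i\theta_0}(U(x_0,t_0)-U(y_0,t_0))\in\mathbb{R}_{>0}$ and work with the real-valued solution $V:=\operatorname{Re}(e^{i\theta_0}U)$, which again satisfies $\partial_t V=LV$ and realizes $\omega$ at the contact point. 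The second-derivative test along the minimizing geodesic $\gamma:[0,L]\to M$ joining $y_0$ to $x_0$, combined with the $\operatorname{BE}(a,\infty)$ condition (which for $L=\Delta_g+X\cdot\nabla$ amounts to $\operatorname{Ric}_g-\tfrac{1}{2}\mathcal{L}_Xg\geq ag$), then yields the desired 1D subsolution inequality.

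Second, I would compare $\omega(\cdot,t)$ to the first odd eigenfunction $\phi_1$ of the corresponding 1D Sturm-Liouville problem on $[-D/2,D/2]$, whose first positive eigenvalue equals $\mu_1=\pi^2/D^2+a/2$ by an explicit substitution of the form $\phi=e^{-as^2/8}\psi$. Multiplying $\phi_1$ by a constant $C>0$ chosen so that $\omega(\cdot,0)\leq C\phi_1$ on $[0,D/2]$ and applying the 1D parabolic comparison yields $\omega(s,t)\leq Ce^{-\mu_1 t}\phi_1(s)$. On the other hand, $\omega(s,t)=e^{-\alpha t}\omega(s,0)$ with $\omega(\cdot,0)\not\equiv 0$ since $\varphi$ is non-constant; dividing and sending $t\to\infty$ forces $\alpha\geq\mu_1=\pi^2/D^2+a/2$.

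The main obstacle will be executing the two-point maximum principle for the non-self-adjoint $L$. In the Andrews-Ni setting the drift is a scalar gradient and pairs cleanly with the symmetric second-order term, whereas for a generic vector field $X$ one must observe that the drift contribution at the contact point can be rewritten as
\[
\tfrac{\phi_s}{2}\bigl(X(x_0)\cdot\gamma'(L)-X(y_0)\cdot\gamma'(0)\bigr)=\tfrac{\phi_s}{4}\int_0^L\mathcal{L}_Xg(\gamma',\gamma')\,dt
\]
via the fundamental theorem of calculus along the geodesic, so that only the symmetric part $\tfrac{1}{2}\mathcal{L}_Xg$ of $\nabla X$ enters and the antisymmetric part is annihilated by the symmetric input $(\gamma',\gamma')$. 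This is precisely the combination controlled by $\operatorname{BE}(a,\infty)$, which closes the argument. Strict convexity of $\partial M$ is essential to ensure the contact point never reaches the boundary, where the Neumann data on $\varphi$ (hence on $U$ and $V$) would otherwise be insufficient.
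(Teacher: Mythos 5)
Your overall strategy is the same as the paper's: a two-point modulus-of-continuity comparison with a one-dimensional Ornstein--Uhlenbeck-type drift model, with the non-gradient drift handled at the touching point by integrating $g(\nabla_{\gamma'}X,\gamma')=\tfrac12\mathcal{L}_Xg(\gamma',\gamma')$ along the minimizing geodesic, so that only the symmetric part of $\nabla X$ — i.e.\ exactly the combination controlled by $\operatorname{BE}(a,\infty)=\{\operatorname{Ric}-\tfrac12\mathcal{L}_Xg\geq ag\}$ — survives. Two points of genuine difference from the paper: you treat the actual modulus $\omega(s,t)$ as a viscosity subsolution of the one-dimensional equation, whereas the paper fixes a smooth comparison function $\phi$ and shows that $v(y,t)-v(x,t)-2\phi(d/2,t)-\epsilon e^t$ stays negative (this avoids ever having to make sense of derivatives of a sup); and you handle the complex eigenfunction by a phase rotation $V=\operatorname{Re}(e^{i\theta_0}U)$ chosen at the touching point, while the paper applies the comparison separately to $\operatorname{Re}(v)$ and $\operatorname{Im}(v)$. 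Your phase trick is a clean simplification and both routes close the argument.

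The one substantive error is the claim that the first nontrivial Neumann eigenvalue of $\partial_s^2-as\partial_s$ on $[-D/2,D/2]$ \emph{equals} $\pi^2/D^2+a/2$ via the substitution $\phi=e^{-as^2/8}\psi$ (or $e^{as^2/4}\psi$). The Liouville conjugation turns the equation into a Schr\"odinger problem $-\psi''+\tfrac{a^2s^2}{4}\psi=(\mu+\tfrac a2)\psi$ with Robin, not Neumann, boundary data, so the potential and the altered boundary condition both perturb the spectrum; there is no exact closed form, and in fact the paper explicitly remarks that $\pi^2/D^2+a/2$ is \emph{not} sharp. What you actually need — and what the paper uses — is only the inequality $\bar\lambda\geq\pi^2/D^2+a/2$ for the model eigenvalue; this is nontrivial and is Proposition 3.1 of Andrews--Ni, which the paper cites rather than reproves. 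To make your proposal correct, replace the claimed equality by this inequality and either prove it or cite it. A secondary point worth making explicit: the model on $[-D/2,D/2]$ has Neumann data at the endpoints, which does not satisfy condition (iv) of the comparison principle at $s=D/2$ where $\phi'$ must stay positive; the paper gets around this by running the comparison on a slightly larger interval $[-\tilde D/2,\tilde D/2]$ with $\tilde D>D$ (on which the model eigenfunction has strictly positive derivative throughout $[0,D/2]$) and then letting $\tilde D\searrow D$. You should incorporate the same enlargement, or an equivalent regularization, before applying the parabolic comparison.
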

 The main difference to the result in \cite{andrews} is that we do not impose any additional constraints on the first order term $X$, whereas \cite{andrews} requires $X$ to be the gradient of a function $\phi$. Such operators are called Bakry-Emery Laplacians and have the invariant measure $e^{-\phi}dVol$, which implies that the spectrum is real. If $X$ is not a gradient, then even the principal eigenvalue is generally complex. \\ \indent We follow \cite{andrews} to prove the theorem: indeed, we compare the decay of a heat flow associated with the operator $L$ to the decay of the heat flow in a one-dimensional model space, and the estimate is obtained by using a maximum principle argument. Contrary to the proof of the first theorem, the argument relies heavily on the Riemannian geometry induced by the operator $L$. The distortion of the geometry induced by the first order term $X$ will only play a minor part. Although the eigenvalue comparison with the model space is sharp, we prefer to state the lower bound $\pi^2/D^2+a/2$, which is not sharp, but more useful. A better lower bound can be obtained through a better understanding of the model function.  \\
 \indent For the non-linear case $p\neq 2$, it is not even clear if an eigenvalue exists. Even if the existence could be established, our methods would not be applicable: 
 the approach we used to obtain the sharp estimate for the principal eigenvalue of the non-linear $p$-operator explicitly exploits that $L$ is self-adjoint with respect to the invariant measure $m$, particularly that $\lambda$ is real, whereas the argument in our second approach relies heavily on the linearity of $L$. Hence,  it does not seem that they could be generalized to the non-symmetric, non-linear case.

\section{Preliminaries}
\label{preliminaries}
\subsection{The geometry of diffusion operators}

We repeat the basic definitions of $\Gamma_2$-calculus in the setting of a smooth manifold and refer to \cite{bakry2} for a good introduction. Let $M$ denote a connected smooth manifold, which is allowed to have a boundary. 
\begin{definition}A linear second-order operator $L:C^\infty(M)\to C^\infty(M)$ is defined to be an elliptic diffusion operator \label{diffusion.operatprs}
	if for any smooth function $\psi:\mathbb{R}^r\to\mathbb{R}$, any $f,f_i\in C^{\infty}(M)$
	and at every point $x\in M$ one has
	$$
	L(\psi(f_1,\dots,f_r))=\sum_{i=1}^{r}\partial_i\psi L(f_i)+\sum_{i,j=1}^{r}\partial_i\partial_j\psi\Gamma(f_i,f_j)
	$$
	and $\Gamma(f,f)\geq0$ with equality if and only if $df=0$. Here,
	$\Gamma$ denotes the so-called Carr\'e du Champ operator; that is,
	$$
	\Gamma(f,g):=\frac12\bigg(L(fg)-fLg-gLf\bigg).
	$$
	
\end{definition}
One easily verifies that diffusion operators are exactly all linear, possibly degenerate elliptic differential operators depending only on the first and second derivatives but not on the function itself. If $L$ is elliptic, then the operator $\Gamma$ induces a Riemannian metric $g$ which satisfies $\Gamma(f,f)=|\nabla _g f|^2$ and $L=\Delta_g+X\cdot\nabla_g$ for some vector field $X$. If $C=\nabla_g \phi$ for some smooth function $\phi$ then $L$ is called Bakry-Emery Laplacian and one easily verifies that $L$ is the Laplace-Beltrami operator of some metric $\tilde g$ if and only if $\phi$ is constant. 
\par In order to view the pair $(M,L)$ as a metric measure space we need the following definitions: 
\begin{definition}
	\label{invariance}
	We say that a locally finite Borel measure $m$ is $L$-invariant if there is a generalized function $\nu$ such that
	$$
	\int_M \Gamma(f,h)dm=-\int_M fLhdm+\int_{\partial M}g\Gamma(h,\nu)dm
	$$
	holds for all smooth $f,g$. $\nu$ is called the outward normal function and is defined to be a set of pairs $(\nu_i,U_i)_{i\in I}$ for a covering $U_i$ of $\partial M$ such that $\nu_i\in C^\infty(U_i)$ and $\Gamma(\nu_i-\nu_j,\cdot)|_{U_i\cap U_j}\equiv 0$. Furthermore, we say that $m$ is smooth if the pushforward of $m$ by any chart of $M$ has a smooth density with respect to the Lebesque measure. 
\end{definition}

\begin{definition}
	The intrinsic distance $d:M\times M \to [0,\infty]$ is defined by
	$$
	d(x,y):=\operatorname{sup}\bigg\{f(x)-f(y) \big| f\in C^\infty(M), \Gamma(f)\leq1\bigg\}
	$$
	and the diameter of $M$ by $D:=\sup\{d(x,y)|x,y\in M\}$.
\end{definition}
The above definitions can be iterated to produce the Hessian and $\Gamma_2$-operator. These operators will then induce a geometry on $(M,L)$.
\begin{definition}
	For any $f,a,b\in C^{\infty}(M)$, we define the Hessian by
	$$
	H_f(a,b)=\frac12\bigg(\Gamma(a,\Gamma(f,b))+\Gamma(b,\Gamma(f,a))-\Gamma(f,\Gamma(a,b))\bigg)
	$$
	and the $\Gamma_2-$operator by
	$$
	\Gamma_2(a,b)=\frac12\bigg(L(\Gamma(a,b))-\Gamma(a,Lb)-\Gamma(b,La)\bigg).
	$$
\end{definition}
The Hessian only depends on the second order terms of $L$ and thus can be seen as the Hessian of some Riemannian metric $g$. On the other hand, $\Gamma_2$ also depends on first order terms of $L$ and thus induces a geometry which in general cannot be seen as a Riemannian object.
\begin{definition}
	For $N\in[1,\infty]$, $x\in M$ and $f\in C^{\infty}(M)$, we define the $N-$Ricci-curvature pointwise by
	$$
	R_N(f,f)(x)=\inf\bigg\{\Gamma_2(\phi,\phi)(x)-\frac{1}{N}(L\phi)^2(x)\big|\phi\in C^{\infty}(M), \Gamma(\phi-f)(x)=0\bigg\}
	$$
	and the Ricci-curvature by $R:=R_\infty$, where we use the convention $1/\infty=0$.\\
	Let $k\in\mathbb{R}$ and $N\in[1,\infty]$. We say that $L$ satisfies 
	$\operatorname{BE}(k,N)$ (the \textit{Bakry-Emery curvature-dimension condition}) if and only if
	\begin{align}
	R_N(f,f)\geq k\Gamma(f). \label{BE}
	\end{align}
	for any $f\in C^{\infty}(M)$.
	This inequality is called the \textit{curvature dimension inequality}.
\end{definition}
\begin{remark}
	The Bochner-formula implies that $\Delta_g$ satisfies $\operatorname{BE}(\kappa,N)$ if and only if $\operatorname{ric}\geq \kappa$ and $\dim(M)\leq N$, but in general, the curvature-dimension condition does not have such an intuitive meaning. For instance, one can easily show that the Ornstein-Uhlenbeck operator $L(f):=\Delta f-x\cdot\nabla f$  on $M=\mathbb{R}^N$ induces the Euclidean distance and satisfies BE$(1,\infty)$. However, one has $R_N\equiv-\infty$ for all $N\in[1,\infty)$. So we see that the geometry induced by $L$ is very different from the geometric situation in $\mathbb{R}^N$ equipped with the standard inner product.
\end{remark}
\par \indent
In spite of such unexpected behaviour, the Ricci curvature turns out to be computable in an easier way. Indeed, in \cite{sturm1}, Sturm showed a generalized Bochner formula:
\begin{theorem}\label{2.bochner.formula}
	For any diffusion operator defined on a Riemannian manifold, we have
	$$
	\Gamma_2(f,f)=R(f,f)+|H_f|^2_{HS}
	$$
	for each $f\in C^\infty(M)$, where $|H_f|^2_{HS}$ denotes the square of the Hilbert-Schmidt norm of the Hessian, that is, $|H_f|^2_{HS}(x):=\operatorname{sup}\big\{\sum_{i,j=1}^{\tilde N(x)}|H_f(e_i,e_j)|^2(x)\big|\{e_i\} \text{ is an ONB of } \Gamma \text{ at } x\}$.
\end{theorem}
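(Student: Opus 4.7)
The plan is to prove the identity pointwise at $x\in M$: I will compute $\Gamma_2(f,f)$ explicitly in Riemannian terms, isolate a ``Ricci-like'' symmetric bilinear form that depends only on $df(x)$, and then recognize $R(f,f)(x)$ as the infimum of $\Gamma_2(\phi,\phi)(x)$ over competitors with prescribed gradient by killing the Hessian contribution through an affine choice of $\phi$ in normal coordinates.

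Since $\Gamma$ is induced by a Riemannian metric $g$, I write $L=\Delta_g+X\cdot\nabla_g$ for a smooth vector field $X$ and compute
\[
\Gamma_2(f,f)=\tfrac{1}{2}\Delta_g(|\nabla f|^2)-\langle\nabla f,\nabla\Delta_g f\rangle+\tfrac{1}{2}X(|\nabla f|^2)-\langle\nabla f,\nabla(X\cdot\nabla f)\rangle.
\]
The first two terms give $|\mathrm{Hess}_g(f)|^2_{HS}+\mathrm{Ric}(\nabla f,\nabla f)$ by the classical Bochner formula. In the $X$-contribution the two occurrences of $\mathrm{Hess}_g(f)$ cancel by the symmetry of the Hessian, leaving $-(\nabla^{\mathrm{sym}} X)(\nabla f,\nabla f)$, where $\nabla^{\mathrm{sym}} X$ denotes the symmetric part of $\nabla X$. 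Since the abstract Hessian $H_f$ only involves the second-order part of $L$ (via $\Gamma$), it coincides with $\mathrm{Hess}_g(f)$, so setting $T:=\mathrm{Ric}-\nabla^{\mathrm{sym}} X$ yields the pointwise identity
\[
\Gamma_2(f,f)=|H_f|^2_{HS}+T(\nabla f,\nabla f).
\]

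Next, the critical observation is that $T(\nabla f,\nabla f)(x)$ is tensorial, i.e.\ depends only on $df(x)$. Hence for any competitor $\phi\in C^\infty(M)$ with $\Gamma(\phi-f)(x)=0$, equivalently $\nabla\phi(x)=\nabla f(x)$, one has $\Gamma_2(\phi,\phi)(x)=|H_\phi|^2_{HS}(x)+T(\nabla f,\nabla f)(x)\geq T(\nabla f,\nabla f)(x)$, with equality iff $H_\phi(x)=0$. To realize equality I will pick normal coordinates $y^i$ centered at $x$ and take $\phi(y)=\sum_i(\partial_i f)(x)\,y^i$ on a neighborhood of $x$, extended to $M$ by a smooth cutoff; the cutoff is irrelevant since $H_\phi(x)$ only depends on the $2$-jet of $\phi$ at $x$. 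Then $\nabla\phi(x)=\nabla f(x)$ and $\mathrm{Hess}_g(\phi)(x)=0$ because the Christoffel symbols vanish at $x$ in normal coordinates. This proves $R(f,f)(x)=T(\nabla f,\nabla f)(x)$, and combining with the identity above gives the theorem.

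The hard part will be the careful bookkeeping of the $X$-terms and the verification that $H_f=\mathrm{Hess}_g(f)$; both are routine manipulations of $\Gamma$-calculus identities, but must be done in coordinates to see the cancellations. The real conceptual content is the minimization step: the definition of $R=R_\infty$ as an infimum over test functions sharing $df(x)$ was designed precisely so that the Hessian term drops out, leaving only the tensorial Ricci-like residue.
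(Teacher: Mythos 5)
The paper does not prove this theorem; it simply states it with a citation to Sturm. Your proof is correct and is essentially the standard argument for this result: split $L=\Delta_g+X\cdot\nabla_g$, apply the classical Bochner formula to the $\Delta_g$ part, observe that in the drift contribution $\tfrac12 X(|\nabla f|^2)-\langle\nabla f,\nabla(X\cdot\nabla f)\rangle$ the two Hessian terms cancel leaving only the tensorial residue $-(\nabla^{\mathrm{sym}}X)(\nabla f,\nabla f)$, verify $H_f=\mathrm{Hess}_g f$ from the definition of $H$ in terms of $\Gamma$, and then exploit that $R(f,f)(x)$ is defined as an infimum over competitors sharing $df(x)$ so that the pointwise-tensorial term is forced and the Hessian term can be driven to zero by an affine function in normal coordinates. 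Your minimization step is exactly why $R$ is defined the way it is, and the normal-coordinate competitor does achieve the infimum. This matches the paper's intent (which defers to Sturm) and fills in the details cleanly; no gap.
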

When studying the principal eigenvalue, the geometry of $\partial M$ will also play an important part, hence we make the following definition.
\begin{definition} \label{s.fund.form}
	Let $\nu$ be the outward normal function, $U\subset M$ be an open set,
	and $\phi,\eta\in C^\infty(U)$,  such that $\Gamma({\nu},\eta),\Gamma({\nu},\phi)\equiv0$ on $U\cap\partial M$. Then we define the second fundamental form on $\partial M$ by
	$$
	II(\phi,\eta)=-H_\phi(\eta,\nu)=-\frac12\Gamma(\nu,\Gamma(\eta,\phi)).
	$$
	If for any $\phi\in C^\infty(U)$ as above with $\Gamma(\phi)>0$ on $\partial M\cap U$ we have that
	$$
	II(\phi,\phi)\leq0
	$$
	on $\partial M \cap U$, then we say that $\partial M$ is convex or that $M$ has a convex boundary. If the inequality is strict, we say that $M$ has a strictly convex boundary.
\end{definition}
\par \indent
As an example, we consider the Bakry-Emery Laplacian $L=\Delta_{\bar g}+\nabla_{\bar g} \phi\cdot\nabla_{\bar g}$ for some Riemannian metric $\bar g$ and a smooth function $\phi$. We define the conformal metric $g:=e^{-\frac{2}{N}\phi}\tilde g$ and let $m=dVol_g,\bar m=dVol_{\bar g}$. Obviously, we have $e^\phi m=\bar m$. Using the divergence theorem one can easily see that $m$ is $L$-invariant. Furthermore, one can show that the Ricci-curvature of $L$ is the Ricci-curvature of $\bar g$ plus a first-order perturbation of the metric $\bar g$ which can be controlled by the $C^2$-norm of $\phi$. So if  $\operatorname{ric}_{\bar g}\geq\kappa \bar g$ and $\phi$ is not too big with respect to its $C^2$ norm, we obtain  $R\geq \kappa' \Gamma$ for some $0<\kappa'<\kappa$.  Now the Bochner formula, the inequality $\Delta_{\tilde g}=(\operatorname{tr}H)^2\leq \dim(M)|H|^2_{HS}$, and the Cauchy-Schwarz inequality imply that $L$ satisfies BE$(\kappa ',N)$ for some finite $N>\dim(M)$.

\subsection{$p$-operators}
In this section, we will assume that $M$ is a closed manifold and $L$ an elliptic diffusion operator with a smooth $L$-invariant measure $m$. The measure $m$ induces the space $W^{1,p}(M)$ and we can use the Riemannian metric induced by $L$ to define the spaces $C^{k,\alpha}(M)$.  We define the $p-$Operator to be the natural generalization of the $p$-Laplacian.
\begin{definition}
	Let $p\in(1,\infty)$ and $f\in C^\infty(M)$. We define the $p-$operator $L_p$ by
	$$
	L_pf(x):=\begin{cases}
	&\Gamma(f)^{\frac{p-2}{2}}\bigg(Lf+(p-2)\frac{H_f(f,f)}{\Gamma(f)}\bigg)\bigg|_x \text{ if } \Gamma(f)(x)\neq 0 \\&0 \qquad\qquad\qquad\qquad\qquad\qquad\qquad\text{ else}
	\end{cases}
	$$
	and the main part of its linearization at $f$ by
	$$
	\mathcal{L}^f_p(\psi)=\begin{cases}
	&\Gamma(f)^{\frac{p-2}{2}}\bigg(L\psi+(p-2)\frac{H_\psi(f,f)}{\Gamma(f)}\bigg)\bigg|_x \text{ if } \Gamma(f)(x)\neq 0 \\&0 \qquad\qquad\qquad\qquad\qquad\qquad\qquad\text{ else}
	\end{cases}
	$$
	for any $\phi\in C^{\infty}(M)$.
\end{definition}
\begin{remark} Since $L_p$ is quasi-linear, we have $\mathcal{L}^f_p(f)=L_p(f)$.
	The $p-$operator can often be seen as a first order perturbation of a $p-$Laplacian: Let $L=\Delta_{\tilde g}+\nabla\phi\cdot\nabla$ for some Riemannian metric $\tilde g$. Then we have $\Gamma (f)=|\nabla f|_{\tilde g}^2$ and $H=H^{\tilde g}$. Hence, we have
	$$
	L_p(f)=\Delta^p_{\tilde g}f+|\nabla f|^{p-2}_{\tilde g}\nabla \phi\cdot\nabla f.
	$$
\end{remark}
Next we would like to define an eigenvalue of $L$. Adjusting for scaling factors, we expect an eigenfunction $u$ with eigenvalue $\lambda$ to satisfy
\begin{align*}
\begin{cases}
&L_pu=-\lambda u|u|^{p-2} \text{ on } M^\circ\\
&\Gamma(u,\tilde \nu)=0 \text{ on }\partial M
\end{cases}
\end{align*}
in a suitable sense. Since $L_pu$ will not be defined everywhere, we have to integrate the equation. We start with the following lemma which can easily be deduced from the invariance of $m$. 
\begin{lemma}\label{int.by.parts}
	Let $\phi\in{C^\infty(M)}$ and $f\in C^2(\operatorname{supp}(\phi))$ as well as $\Gamma(f)>0$ on $\operatorname{supp}(\phi)$. Then we have
	$$
	\int_M \phi L_pudm=-\int_M\Gamma(f)^{\frac{p-2}{2}}\Gamma(f,\phi)dm+\int_{\partial M} \Gamma(f,\tilde{\nu})\Gamma(f)^{\frac{p-2}{2}}\phi dm'.
	$$
\end{lemma}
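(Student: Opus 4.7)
The plan is to recast $L_p f$ in divergence form and then apply the $L$-invariance of $m$. Two calculus facts from the diffusion property are needed: the chain rule $\Gamma(\psi(g),h) = \psi'(g)\Gamma(g,h)$ and the Leibniz rule $\Gamma(gh,f) = g\,\Gamma(h,f) + h\,\Gamma(g,f)$, both immediate consequences of Definition \ref{diffusion.operatprs}. Taking $a=b=f$ in the definition of the Hessian gives $H_f(f,f) = \tfrac12\,\Gamma(f,\Gamma(f))$, and the chain rule applied with $\psi(x)=x^{(p-2)/2}$ yields
\begin{equation*}
\Gamma\bigl(\Gamma(f)^{(p-2)/2},\, f\bigr) \;=\; \tfrac{p-2}{2}\,\Gamma(f)^{(p-4)/2}\,\Gamma(\Gamma(f), f) \;=\; (p-2)\,\Gamma(f)^{(p-4)/2}\,H_f(f,f).
\end{equation*}
Comparing with the definition of $L_p$, this shows that on the open set $\{\Gamma(f)>0\} \supset \operatorname{supp}(\phi)$ one has the pointwise identity
\begin{equation*}
L_p f \;=\; \Gamma(f)^{(p-2)/2}\, Lf \;+\; \Gamma\bigl(\Gamma(f)^{(p-2)/2},\, f\bigr),
\end{equation*}
which is the analogue of the familiar divergence-form expression $\Delta_p f = \operatorname{div}(|\nabla f|^{p-2}\nabla f)$.

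Multiplying this identity by $\phi$, integrating against $m$, and applying the invariance formula from Definition \ref{invariance} to the first summand with test function $\phi\,\Gamma(f)^{(p-2)/2}$ gives
\begin{equation*}
\int_M \phi\,\Gamma(f)^{(p-2)/2}\,Lf\, dm \;=\; -\!\int_M \Gamma\bigl(\phi\,\Gamma(f)^{(p-2)/2},\, f\bigr)\, dm \;+\; \int_{\partial M} \phi\,\Gamma(f)^{(p-2)/2}\,\Gamma(f,\tilde\nu)\, dm'.
\end{equation*}
Expanding the interior integrand via Leibniz as $\phi\,\Gamma(\Gamma(f)^{(p-2)/2}, f) + \Gamma(f)^{(p-2)/2}\,\Gamma(\phi, f)$, the first piece cancels precisely against the $\int_M \phi\,\Gamma(\Gamma(f)^{(p-2)/2}, f)\,dm$ contribution coming from the divergence-form rewriting of $\int_M \phi\,L_p f\,dm$, leaving, after using the symmetry $\Gamma(\phi,f)=\Gamma(f,\phi)$, exactly the claimed identity.

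The main technical point is regularity: Definition \ref{invariance} is formulated for smooth arguments, while here $f$ is only $C^2$ on $\operatorname{supp}(\phi)$, so $\Gamma(f)^{(p-2)/2}$ is in general only $C^1$ on this set, and the test function $\phi\,\Gamma(f)^{(p-2)/2}$ inherits this reduced regularity. I would handle this by a standard cutoff-and-mollification argument: extend $f$ to a $C^2$ function on a slightly larger open neighborhood of $\operatorname{supp}(\phi)$ on which $\Gamma(f)$ stays bounded away from zero, approximate it by smooth $f_n$ in $C^2$ on this set (so that $\Gamma(f_n)>0$ there for large $n$), apply the argument above to each $f_n$, and pass to the limit. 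The smoothness of $\phi$ and $m$, together with the continuity of $L$, $\Gamma$, and the $(p-2)/2$-power on $(0,\infty)$, ensure convergence of both the interior and boundary integrals, completing the proof.
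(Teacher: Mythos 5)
Your proof is correct and follows the natural argument that the paper leaves implicit (the paper's proof consists of the single sentence that the lemma ``can easily be deduced from the invariance of $m$''). You rewrite $L_p f$ in divergence form as $\Gamma(f)^{(p-2)/2}Lf + \Gamma(\Gamma(f)^{(p-2)/2},f)$ using $H_f(f,f)=\tfrac12\Gamma(f,\Gamma(f))$ and the chain rule, apply the $L$-invariance of $m$ with test function $\phi\,\Gamma(f)^{(p-2)/2}$, and expand by Leibniz so the $\Gamma(\Gamma(f)^{(p-2)/2},f)$ terms cancel — this is exactly what the author intends. Your explicit remark about the regularity gap (the invariance identity is stated for smooth arguments, whereas $\phi\,\Gamma(f)^{(p-2)/2}$ is only $C^1$) and the approximation remedy is a genuine refinement the paper does not bother to articulate; it is the right fix, and the uniform positivity of $\Gamma(f)$ on $\operatorname{supp}(\phi)$ is precisely what makes the power map smooth and the limit passage unproblematic.
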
 
This formula suggests the following weak eigenvalue equation. Homogeneous Neumann boundary conditions will arise naturally from this definition if $M$ has a non-empty boundary.  
\begin{definition}
	We say that $\lambda$ is an eigenvalue of $L_p$ if there is a
	$u\in {W^{1,p}(M)}$, such that for any $\phi\in {C^{\infty}(M)}$ the following identity holds:
	$$
	\int_M\Gamma(u)^{\frac{p-2}{2}}\Gamma(u,\phi)dm=\lambda\int_M \phi u|u|^{p-2}dm.
	$$
	By density, this also holds for all $\phi\in W^{1,p}(M)$.
\end{definition}
Choosing $\phi\equiv 1$, we see that $\int_M u|u|^{p-2}dm=0$ unless $\lambda=0$. This a priori constraint allows us to show the existence of the principal eigenvalue using standard variational techniques.
\begin{lemma} \label{existence.regularity}
	Let $M$ be a compact and smooth Riemannian manifold with an elliptic diffusion operator $L$ and a smooth $L$-invariant measure $m$. Then the principal eigenvalue of $L_p$ (with Neumann boundary conditions) is well-defined and the eigenfunction $u$ is in
	$C^{1,\alpha}(M)$ for some $\alpha>0$. $u$ is smooth near points $x\in M$ satisfying
	$\Gamma(u)(x),u(x)\neq 0$, and in $C^{3,\alpha}$ and $C^{2,\alpha}$
	for $p>2$ and $p<2$, respectively, near points with $\Gamma(u)(x)\neq0$ and $u(x)= 0$.
\end{lemma}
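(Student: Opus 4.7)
The plan is to obtain existence by a direct variational argument and then to deduce the regularity claims from the $p$-Laplace-type quasilinear theory of Tolksdorf, DiBenedetto and Lieberman, transferred to the Riemannian structure induced by $\Gamma$.

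For existence, I will minimize the functional
\[
\mathcal{E}(u):=\int_M\Gamma(u)^{p/2}\,dm
\]
over the admissible class
\[
\mathcal{A}:=\Big\{u\in W^{1,p}(M)\,:\,\int_M|u|^p\,dm=1,\ \int_M u|u|^{p-2}\,dm=0\Big\}.
\]
The second constraint rules out nonzero constants (which correspond to the trivial eigenvalue), so by a standard compactness-contradiction argument based on Rellich--Kondrachov the infimum $\lambda$ is strictly positive. Any minimizing sequence is bounded in $W^{1,p}(M)$; reflexivity yields a weakly convergent subsequence, and the compact embedding $W^{1,p}(M)\hookrightarrow L^p(M)$ ensures that both constraints pass to the limit. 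Since $\mathcal{E}$ is convex and continuous on $W^{1,p}(M)$ it is weakly lower semicontinuous, so the limit realizes the infimum. Writing the Euler--Lagrange equation with Lagrange multipliers for both constraints and testing against $\phi\equiv 1$ forces the multiplier attached to the constraint $\int u|u|^{p-2}\,dm=0$ to vanish, which, combined with Lemma~\ref{int.by.parts}, reproduces the weak eigenvalue equation in the definition. The Neumann condition $\Gamma(u,\tilde\nu)=0$ is natural in the sense that it arises from the vanishing of the boundary integral in Lemma~\ref{int.by.parts} when the test functions are not required to vanish on $\partial M$.

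For the regularity, I will localize the equation and, in charts adapted to the metric induced by $\Gamma$, view it as a Neumann problem for a quasilinear operator in divergence form of $p$-Laplace type. The interior $C^{1,\alpha}$-regularity of weak solutions is the theorem of Tolksdorf and DiBenedetto, while the boundary $C^{1,\alpha}$-regularity for the corresponding conormal problem is due to Lieberman; since $\partial M$ is smooth or empty, both apply and glue to give $u\in C^{1,\alpha}(M)$. At points where $\Gamma(u)\neq 0$, the linearization $\mathcal{L}^u_p$ is uniformly elliptic with H\"older continuous coefficients, and Schauder theory bootstraps the regularity of $u$ to match the regularity of the nonlinearity $-\lambda u|u|^{p-2}$. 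Where additionally $u\neq 0$ this nonlinearity is smooth, so $u$ is smooth there. Where $u=0$ but $\Gamma(u)\neq 0$, the scalar map $t\mapsto t|t|^{p-2}$ is $C^{1,\min(p-2,1)}$ for $p>2$ and only $C^{0,p-1}$ for $1<p<2$, which after one further Schauder step yields the asserted $C^{3,\alpha}$ and $C^{2,\alpha}$ regularity, respectively.

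The main obstacle is the $C^{1,\alpha}$-regularity on the degeneracy set $\{\Gamma(u)=0\}$: this is precisely where $L_p$ loses uniform ellipticity and becomes a genuinely degenerate (or singular, if $p<2$) quasilinear operator of $p$-Laplace type, so the intrinsic $\Gamma_2$-calculus powering the rest of the paper is of no direct help. The rescue is that the standing hypotheses, namely that $L$ is elliptic and $m$ is smooth, ensure that in local coordinates $L_p$ falls into the structural framework required by the Tolksdorf--DiBenedetto--Lieberman theory, so the desired global $C^{1,\alpha}$-estimate can be quoted and the subsequent bootstrap carried out.
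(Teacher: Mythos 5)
Your proposal is correct and takes essentially the same route as the paper: the paper's (very terse) proof invokes the direct method on the Rayleigh quotient $\sigma_p$ over the zero-mean constraint set, followed by regularity from Tolksdorf's Theorem 1 and standard Schauder estimates, which is precisely what you spell out. The only minor imprecision is that for $p>2$ passing from $u\in C^{1,\alpha}$ to $u\in C^{3,\alpha'}$ near $\{\Gamma(u)\neq0,\,u=0\}$ actually requires two successive Schauder applications (first lifting the coefficients of $\mathcal{L}^u_p$ to $C^{1,\alpha}$, then using the $C^{1,\min(p-2,1)}$ regularity of $t\mapsto t|t|^{p-2}$), not one, but this does not affect the conclusion.
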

\begin{proof}
	This follows from using the direct method of the calculus of variations to compute the quantity
	$$
	\sigma_p:=\inf\bigg\{\frac{\int_M \Gamma(u)^{\frac{p}{2}}}{\int_M |u|^{p}} \bigg| u\in W^{1,p}(M),   \int_M u|u|^{p-2}=0\bigg\}.
	$$
	The regularity statement follows from \cite[Theoem 1]{tol} and standard Schauder estimates.
	
\end{proof}
\begin{remark}
	\label{strong.ev.eqn}
	Near interior points where $\Gamma(u)$ does not vanish, we have $L_p(u)=-\lambda u|u|^{p-1}$ and one easily shows that $\Gamma(u,\nu)_{|\partial M}\equiv 0$. Since
	$\int_M u|u|^{p-2}dm=0$, $u$ must change its sign and the eigenvalue equation is invariant under rescaling, so we can
	assume without loss of generality that $\min u=-1$ and $\max u\leq 1$.
\end{remark}

\section{Eigenvalue Estimate for the $p$-Operator}
\label{eigenvalue.estimate}
In this section, we prove a sharp estimate for the principal eigenvalue of $L_p$. Throughout this section, we will assume that $M$ is compact and connected and $L$ an elliptic diffusion operator with invariant measure $m=dVol$. If $M$ has a non-empty boundary, we assume it to be  convex. We define $\lambda$ to be the principal eigenvalue of $L_p$ with Neumann boundary conditions and $u$ the corresponding eigenfunction, with $\min u=1$ and $\max u\leq 1$. Finally, we assume that $L$ satisfies $\operatorname{BE}(0,N)$ for some $N\in[1,\infty]$. If $M=[-D/2,D/2]$, $L=\Delta$ and $m=\mathcal{L}^1$ then one can easily show that  $\lambda=(p-1)\frac{D^p}{\pi_p^p}$ and $u(\cdot)=\sin_p(\frac{\pi_p}{D}\cdot)$, where
$$
\frac12\pi_p =\int_{0}^{1}\frac{1}{(1-s^p)^{\frac{1}{p}}}
$$
and the $p$-sine function is defined implicitly for $t\in[-\frac{\pi_p}{2},\frac{\pi_p}{2}]$ by
$$
t=\int_{0}^{\sin_p(t)}\frac{1}{(1-s^p)^{\frac{1}{p}}}ds.
$$
It is natural to think that this operator minimizes the principal eigenvalue amongst all admissible operators. However, they do not always turn out to be suitable comparison models. Given a model function $w$, we would like to use the function $u\circ w^{-1}$ to estimate the diameter from below. This will only be optimal if $\max u=\max w$. If $\max u=1$, the one-dimensional eigenvalue equation will be a good comparison model, otherwise, we will consider a relaxed equation dampening the growth of $w$ to give $\max w=\max u$.
More precisely, we follow \cite[section 5]{val1}
and consider for any $n\in(1,\infty)$ the equation
\begin{align}
\label{model.ode}
\begin{cases}
&\frac{\partial}{\partial t} \big(w'|w'|^{p-2}\big)-Tw'|w'|^{p-2}+\lambda w|w|^{p-2}=0 \text{ on } (0,\infty)\\&w(a)=-1 \qquad w'(a)=0
\end{cases}
\end{align} 
where $a\geq 0$ and $T$ is a solution of the differential equation $T^2/(n-1)=T'$, that is, either $T=-(n-1)/t$ or $T\equiv 0$. If $T\equiv 0$, this is simply the eigenvalue equation, otherwise, it can be seen as a relaxed eigenvalue equation. For any $n\in(1,\infty)$, we will denote the solution corresponding to $T=-(n-1)/t$ and $a\geq 0$ by $w_a$ and for ease of notation we define $w_\infty$ to be the solution corresponding to $T\equiv 0$. \\\indent
We define $\alpha:=(\lambda/(p-1))^{\frac{1}{p}}$ and note that
the differential equation implies that for any $t>a$ which is close to $a$, one has $w_a''(t)>0$, so there exists a first time $b=b(a)$ such that $w_a'(b(a))=0$ and $w_a'>0$ in $(a,b(a))$. In particular, the restriction
of $w_a$ to $[a,b(a)]$ is invertible and we identify $w_a=w_a|_{[a,b(a)]}$. Finally, we define $\delta(a):=b(a)-a$ to be the length of the interval and  $m(a)=w_a(b(a))$ to be the maximum of $w_a$. As we have seen, $\delta(\infty)=\pi_p/\alpha$ and $m(\infty)=1$ for any $\lambda>0$.
\\ \indent In order to compare the maximum and gradient of an eigenfunction $u$ and $w_a$, we need to understand the behaviour of the solutions of Equation (\ref{model.ode}) and the asymptotic behaviour of the functions $m$ and $\delta$. This is done in the following two theorems.
\begin{theorem} 
	For any $\lambda>0$, equation (\ref{model.ode}) has a solution $w_a\in C^1(0,\infty)$ which is also in $C^0([0,\infty)])$ if $a=0$. Furthermore, $w_a'|w_a'|^{p-2}\in C^1((0,\infty))$. The solution depends continuously on
	$n$, $\lambda$, and $a$ in terms of local uniformal convergence of $w_a$ and $w_a'$ in $(0,\infty)$. Additionally, for each solution there is a sequence $t_i\in(0,\infty)$ with $t_i\to\infty$ and $w_a(t_i)=0$.
\end{theorem}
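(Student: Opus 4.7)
The plan is to recast the quasilinear second-order equation as a first-order system via the substitution $v := |w'|^{p-2}w'$. Under this substitution the ODE becomes $v' - Tv = -\lambda w|w|^{p-2}$, which is linear in $v$, coupled to the algebraic relation $w' = |v|^{(2-p)/(p-1)}v$. Multiplying by the integrating factor $t^{n-1}$ (resp.\ $1$ when $T\equiv 0$) and integrating from $a$, using $v(a)=0$, one arrives at the integral system
\begin{equation*}
t^{n-1}v(t)=-\lambda\int_{a}^{t}s^{n-1}w(s)|w(s)|^{p-2}\,ds,\qquad w(t)=-1+\int_{a}^{t}|v(s)|^{(2-p)/(p-1)}v(s)\,ds.
\end{equation*}
Local existence on a right neighbourhood of $a$ follows from Banach's fixed point theorem applied to this system on a small ball around the constant $-1$, since the composed map is a contraction on short intervals. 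For $a=0$, the weight $t^{n-1}$ absorbs the singularity of $T$; iterating the integral once yields the asymptotic $w(t)+1\sim c\,t^{p/(p-1)}$, which gives $w_0\in C^0([0,\infty))$.

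Global existence and regularity come from an energy estimate. Setting $E(t):=\tfrac{p-1}{p}|w'|^p+\tfrac{\lambda}{p}|w|^p$ and differentiating with the help of the ODE, I obtain the identity $E'(t)=T(t)|w'(t)|^p$, which is nonpositive because $T\le 0$. Hence $|w|$ and $|w'|$ remain uniformly bounded, the local solution extends to all of $(0,\infty)$, and from the linear ODE $v'=Tv-\lambda w|w|^{p-2}$ with continuous coefficients one reads off $v\in C^{1}((0,\infty))$. Since $w'=|v|^{(2-p)/(p-1)}v$ is then continuous on $(0,\infty)$ with one-sided limit $0$ at $a$, one concludes $w_a\in C^{1}(0,\infty)$.

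Continuous dependence of $w_a$ and $w_a'$ on $(n,\lambda,a)$ in the topology of local uniform convergence is obtained from a Gr\"onwall-type estimate applied to the integral system: for parameter triples in a compact set the integrands are uniformly Lipschitz on bounded subsets of $(w,v)$-space, so solutions form a uniformly Cauchy family on each compact subinterval.

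The main obstacle is the oscillation statement, since the energy is only non-increasing and could a priori decay along a non-oscillating trajectory converging to the equilibrium $w\equiv 0$. To rule this out, I would introduce a $p$-Pr\"ufer transformation $w=R\sin_{p}\theta$, $v=\alpha^{p-1}R^{p-1}\cos_{p}\theta\,|\cos_{p}\theta|^{p-2}$ with $\alpha:=(\lambda/(p-1))^{1/p}$. A direct computation converts the ODE into a pair for $(R,\theta)$ of the form $\theta'=\alpha+T\cdot F(\theta)$ and $R'=T\cdot G(\theta)\,R$, with $F,G$ explicit and bounded. When $T\equiv 0$ this recovers the explicit oscillating solution $w=-\cos_{p}(\alpha(t-a))$; when $T=-(n-1)/t$, the fact that $T(t)\to 0$ as $t\to\infty$ forces $\theta'(t)\to\alpha>0$, so $\theta(t)\to\infty$, and each crossing of a level $\theta=k\pi_{p}$ with $k\in\mathbb{Z}$ produces a zero of $w_a$, yielding the required sequence $t_i\to\infty$.
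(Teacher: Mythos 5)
Your overall plan — reduce the second-order quasilinear ODE to the first-order system in $(w,v)$ with $v=|w'|^{p-2}w'$, establish local existence near $t=a$, globalize with the energy $E=\tfrac{p-1}{p}|w'|^p+\tfrac{\lambda}{p}|w|^p$ (which indeed satisfies $E'=T|w'|^p\le 0$), and prove oscillation via a $p$-Pr\"ufer transformation — is exactly the argument in Valtorta's Section~5, which the paper cites for this theorem, so the strategy is the intended one.

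There is, however, a genuine gap in the continuous-dependence step. You assert that ``the integrands are uniformly Lipschitz on bounded subsets of $(w,v)$-space,'' but for $p>2$ the map $v\mapsto |v|^{(2-p)/(p-1)}v=\operatorname{sign}(v)\,|v|^{1/(p-1)}$ has unbounded derivative as $v\to 0$, and $v$ vanishes precisely at $t=a$ and at each subsequent critical point of $w_a$. So a na\"{\i}ve Gr\"onwall estimate on the integral system does not close near these points; in fact, even Lipschitz uniqueness of the forward solution at a critical point is not immediate in these coordinates. The same concern also makes your ``contraction on short intervals'' claim for local existence require a more careful weighted argument (one must track that $v_1,v_2$ are both comparable to $(t-a)$ so that the non-Lipschitz factor $|v|^{(2-p)/(p-1)}$ is integrable after multiplication by $|v_1-v_2|$). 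The clean way to resolve both issues — and the one Valtorta actually uses — is the Pr\"ufer transformation you introduce only at the end: in the variables $(R,\theta)$ the system $\theta'=\alpha+T\,F(\theta)$, $R'=T\,G(\theta)R$ has \emph{genuinely} Lipschitz right-hand side (for $t$ bounded away from $0$), so uniqueness, continuous dependence on $(n,\lambda,a)$, and global existence all follow from standard Picard--Lindel\"of/Gr\"onwall theory in those variables; you then transfer back to $(w,w')$. I would therefore promote the Pr\"ufer change of variables from a device for oscillation to the central tool for the well-posedness and continuous-dependence parts of the statement, rather than resting those on a Lipschitz property that fails.
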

\begin{proof}
	This follows from basic ODE theory (see \cite[section 5]{val1}).  \end{proof}
\begin{theorem} \label{ode.asymptotics}
	For any $n>1$, the function $\delta(a)$ is continuous on $[0,\infty)$
	and  strictly greater than $\pi_p/\alpha$. Furthermore, for any $a\in[0,\infty)$ we have
	$$
	\lim_{a\to\infty}\delta(a)=\frac{\pi_p}{\alpha},\qquad m(a)<1, \qquad
	\lim_{a\to\infty}m(a)=1.
	$$
\end{theorem}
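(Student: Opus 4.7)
The plan centers on the energy
$$E(t) := \tfrac{p-1}{p}|w_a'(t)|^p + \tfrac{\lambda}{p}|w_a(t)|^p.$$
Multiplying (\ref{model.ode}) by $w_a'$ yields $E'(t) = -\tfrac{n-1}{t}|w_a'(t)|^p \le 0$, strictly where $w_a'\neq 0$. Evaluating at the endpoints of $[a,b(a)]$ gives $E(a)=\lambda/p$ and $E(b(a))=\lambda|m(a)|^p/p$, so
$$1-|m(a)|^p \;=\; \tfrac{p}{\lambda}\int_a^{b(a)}\tfrac{n-1}{t}|w_a'|^p\,dt \;>\;0$$
since $w_a'>0$ on $(a,b(a))$; this gives $m(a)<1$ immediately (at $a=0$ integrability of the kernel near $t=0$ is inherited from $|w_0'|^p\sim t^{p/(p-1)}$).

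For continuity of $\delta$ and $m$ on $[0,\infty)$ I would appeal to the continuous dependence of $(w_a,w_a')$ on $a$ provided by the preceding theorem; transversality of the zero of $w_a'$ at $b(a)$, guaranteed by $(w_a'|w_a'|^{p-2})'(b(a)) = -\lambda|m(a)|^{p-1}<0$, lets the implicit function theorem yield continuity of $b(a)$, and hence of $\delta(a)$ and $m(a)$. For the asymptotics as $a\to\infty$ I rescale $s=t-a$: the friction coefficient $(n-1)/(a+s)$ tends to $0$ uniformly on bounded $s$-intervals, so by continuous dependence on coefficients the rescaled solution $w_a(a+\cdot)$ converges in $C^1_{\mathrm{loc}}$ to the unperturbed solution $v$ with $v(0)=-1$, $v'(0)=0$, whose first critical point is at $s=\pi_p/\alpha$ with value $1$. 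This yields $\delta(a)\to\pi_p/\alpha$ and $m(a)\to 1$.

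The most delicate assertion is the strict lower bound $\delta(a)>\pi_p/\alpha$. My plan is to introduce generalised Pr\"ufer coordinates
$$w_a(t) = R(t)^{1/p}\sin_p\theta(t), \qquad w_a'(t) = \alpha R(t)^{1/p}\cos_p\theta(t),$$
with $R=pE/\lambda$, so that $R(a)=1$, $R(b(a))=|m(a)|^p$, and the trajectory winds from $\theta=-\pi_p/2$ to $\theta=\pi_p/2$. Combining the dissipation identity $R'/R=-\tfrac{p(n-1)}{(p-1)t}|\cos_p\theta|^p$ with the defining relation for $w_a'$ yields
$$\theta'(t) = \alpha + \tfrac{n-1}{(p-1)t}\,|\cos_p\theta|^{p-2}\cos_p\theta\,\sin_p\theta.$$
The perturbation is odd in $\theta$ (since $\sin_p$ is odd and $|\cos_p|^{p-2}\cos_p$ is even) while $1/t$ is strictly decreasing along the trajectory. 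Pairing each $\varphi\in(0,\pi_p/2)$ with $-\varphi$ and exploiting the asymmetry $t(-\varphi)<t(\varphi)$ produces $1/\theta'(-\varphi)+1/\theta'(\varphi)>2/\alpha$, whose integration over $(0,\pi_p/2)$ gives the strict excess $\delta(a)>\pi_p/\alpha$. The main obstacle is that for small $a$ the perturbation in $\theta'$ can dominate $\alpha$, so $\theta$ may fail to be monotonic and the change of variables requires modification; following \cite[Section~5]{val1} this is handled by first establishing the strict inequality for $a$ large via the Pr\"ufer argument and then extending it to all $a\in[0,\infty)$ by combining continuity of $\delta$ with a Sturm-type comparison ruling out the limiting equality $\delta(a_0)=\pi_p/\alpha$.
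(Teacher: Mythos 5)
Your route is genuinely different from the paper's. The paper's proof is a one-line deferral to \cite[Section~5]{val1} that packages everything through two monotonicity claims ($\delta$ decreasing, $m$ increasing); combined with $\delta(\infty)=\pi_p/\alpha$ and $m(\infty)=1$ those yield all four assertions at once. You instead argue the individual claims directly. Your energy computation is correct: multiplying \eqref{model.ode} by $w_a'$ gives $E'=-\tfrac{n-1}{t}|w_a'|^p\le 0$, and evaluating at $a$ and $b(a)$ yields $1-|m(a)|^p=\tfrac{p}{\lambda}\int_a^{b(a)}\tfrac{n-1}{t}|w_a'|^p\,dt>0$, so $m(a)<1$ falls out cleanly (and your asymptotic $|w_0'|^p\sim t^{p/(p-1)}$ does indeed make the integral converge at $a=0$). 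The continuity via transversality of $w_a'|w_a'|^{p-2}$ at $b(a)$, and the limit $a\to\infty$ via rescaling and continuous dependence on the vanishing friction coefficient, are both sound. The Pr\"ufer identity $\theta'=\alpha+\tfrac{n-1}{(p-1)t}|\cos_p\theta|^{p-2}\cos_p\theta\sin_p\theta$ is correct, and your pairing computation ($1/\theta'(-\varphi)+1/\theta'(\varphi)>2/\alpha$ using $t(-\varphi)<t(\varphi)$) is valid whenever $\theta$ is strictly increasing on the whole of $(a,b(a))$.

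The genuine gap is in the last step, and it is exactly where you flag it: for small $a$ the perturbation can exceed $\alpha$ on the lower half $\theta\in(-\pi_p/2,0)$, so $\theta$ need not be monotone and the change of variables $t\mapsto\theta$ breaks. Your proposed repair — strict inequality for large $a$, then continuity of $\delta$ plus a ``Sturm-type comparison ruling out $\delta(a_0)=\pi_p/\alpha$'' — is not an argument as written. Continuity plus ``equality never holds'' would indeed propagate the strict inequality by the intermediate value theorem, but the claim that equality is impossible is precisely what needs proof; it reduces to showing $\int_a^{b(a)}\tfrac{Q(\theta(t))}{t}\,dt<0$ with $Q(\theta)=|\cos_p\theta|^{p-2}\cos_p\theta\sin_p\theta$, which in the non-monotone regime is not a standard Sturm comparison and you have not supplied the mechanism. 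The paper's (i.e.\ Valtorta's) route via monotonicity of $\delta$ in $a$ is structured specifically to avoid this pitfall: once $\delta$ is shown strictly decreasing, $\delta(a)>\delta(\infty)=\pi_p/\alpha$ is immediate for every $a$, with no case split on whether $\theta$ is monotone in $t$. So either fill in the non-monotone case directly, or prove monotonicity of $\delta$ in $a$ as the paper suggests; as it stands the proposal does not establish $\delta(a)>\pi_p/\alpha$ for all $a\in[0,\infty)$.
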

\begin{proof}
	Continuity just follows from continuous dependence of the data. The rest of the proof is a bit technical, but straightforward: The statement is true for $a=\infty$, hence it suffices to show that $\delta$ is decreasing and $m$ is increasing (see \cite[section 5]{val1}). 
\end{proof} 
\subsection{Gradient comparison}
In this subsection, we compare the gradient of the eigenfunction with the gradient of the model function in the one-dimensional model space. Following \cite{val1}, we will prove the gradient comparison using a maximum principle involving the linearization $\mathcal{L}^u_p$. The first step  
to generalize the $p$-Bochner formula:
\begin{lemma}[$p$-Bochner formula] \label{p.bochner.formula} Let $u\in C^{1,\alpha}(M)$ be the first eigenfunction of $L_p$. Let $x\in M$ be a point such that 
	$\Gamma(u)(x)\neq 0$ and $u(x)\neq 0 $ if $1<p<2$. Then we have the p-Bochner formula
	\begin{align*}
	\frac{1}{p}\mathcal{L}^u_p(\Gamma(u)^{\frac{p}{2}})=&\Gamma(u)^{\frac{p-2}{2}}\bigg(\Gamma(L_pu,u)-(p-2)L_puA_u\bigg)\\&+\Gamma(u)^{p-2}\bigg(\Gamma_2(u)+p(p-2)A_u^2\bigg),
	\end{align*}
	where $A_u=H_u(u,u)/\Gamma(u)$.
\end{lemma}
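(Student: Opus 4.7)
The plan is to reduce the claim to a direct computation carried out at points where $\Gamma(u)>0$ (and, for $1<p<2$, $u\neq 0$), so that by Lemma \ref{existence.regularity} the eigenfunction $u$ is $C^{3}$ locally and every quantity appearing in the formula is smooth. The strategy is to expand $\mathcal{L}^u_p(\Gamma(u)^{p/2})$ using the definitions of $\mathcal{L}^u_p$, $L$, and $H_f$, then show that the resulting expression matches the right-hand side after rewriting $\Gamma(L_pu,u)$ via Leibniz applied to the explicit formula $L_p u=\Gamma(u)^{(p-2)/2}(Lu+(p-2)A_u)$.

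First I would treat the two pieces of $\mathcal{L}^u_p(\Gamma(u)^{p/2})=\Gamma(u)^{(p-2)/2}\bigl(L(\Gamma(u)^{p/2})+(p-2)H_{\Gamma(u)^{p/2}}(u,u)/\Gamma(u)\bigr)$ separately. For the first, the diffusion property applied to $\psi(s)=s^{p/2}$ yields
$$L(\Gamma(u)^{p/2})=\tfrac{p}{2}\Gamma(u)^{(p-2)/2}L(\Gamma(u))+\tfrac{p(p-2)}{4}\Gamma(u)^{(p-4)/2}\Gamma(\Gamma(u)),$$
and the definition of $\Gamma_2$ rewrites $L(\Gamma(u))=2\Gamma_2(u)+2\Gamma(u,Lu)$, which injects the $\Gamma_2(u)$ term visible on the right-hand side. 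For the second, a short calculation from the definition of $H_f$ gives the chain rule $H_{\psi(g)}(a,b)=\psi'(g)H_g(a,b)+\psi''(g)\Gamma(g,a)\Gamma(g,b)$, which applied to $g=\Gamma(u)$ and $a=b=u$ produces
$$H_{\Gamma(u)^{p/2}}(u,u)=\tfrac{p}{2}\Gamma(u)^{(p-2)/2}H_{\Gamma(u)}(u,u)+\tfrac{p(p-2)}{4}\Gamma(u)^{(p-4)/2}\Gamma(u,\Gamma(u))^{2}.$$
Here I would use the key identity $\Gamma(u,\Gamma(u))=2H_u(u,u)=2A_u\Gamma(u)$ (immediate from the definition of $H_u$) to convert the last factor into $4A_u^2\Gamma(u)^2$, producing a clean $p(p-2)A_u^2$ contribution.

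On the right-hand side I would expand $\Gamma(L_p u,u)$ using Leibniz together with the same identity $\Gamma(u,\Gamma(u))=2A_u\Gamma(u)$, yielding terms in $\Gamma(u,Lu)$, $\Gamma(u,A_u)$, and $A_uLu$; the $A_uLu$ contribution cancels against the $-(p-2)L_puA_u$ summand modulo $A_u^2$ corrections. After collecting powers of $\Gamma(u)$ and cancelling the common $\Gamma(u)^{p-2}\Gamma_2(u)$ and $\Gamma(u)^{p-2}\Gamma(u,Lu)$ pieces appearing on both sides, the equality reduces to showing
$$\tfrac{1}{4}\Gamma(\Gamma(u))+\tfrac{1}{2}H_{\Gamma(u)}(u,u)=\Gamma(u,H_u(u,u)),$$
which I would verify by expanding $H_{\Gamma(u)}(u,u)$ directly from the definition (with $f=\Gamma(u)$, $a=b=u$): the definition produces $2H_{\Gamma(u)}(u,u)=2\Gamma(u,\Gamma(\Gamma(u),u))-\Gamma(\Gamma(u),\Gamma(u))=4\Gamma(u,H_u(u,u))-\Gamma(\Gamma(u))$, which rearranges to exactly the required identity.

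The main obstacle is bookkeeping rather than any conceptual difficulty: the formula involves three different powers of $\Gamma(u)$ and many cross terms, and one must keep track of how the $(p-2)$ factors distribute when applying the two chain rules. The essential algebraic miracle is the final identity between $\Gamma(\Gamma(u))$, $H_{\Gamma(u)}(u,u)$ and $\Gamma(u,H_u(u,u))$, which is structurally forced by the definition of the Hessian and is what makes the apparently complicated $p$-Bochner formula collapse to a clean expression in $\Gamma(L_pu,u)$, $\Gamma_2(u)$ and $A_u$.
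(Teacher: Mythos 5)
Your proposal is correct and follows essentially the same line of attack as the paper's proof: expand $L(\Gamma(u)^{p/2})$ via the diffusion property and $H_{\Gamma(u)^{p/2}}(u,u)$ via a chain rule, expand $\Gamma(L_pu,u)$ via Leibniz, and match coefficients. The only cosmetic difference is that the paper normalizes $\Gamma(u)(x)=1$ first and carries the intermediate computation in terms of $\Gamma(u,\Gamma(u,\Gamma(u)))$, whereas you keep $\Gamma(u)$ general, use the Hessian chain rule $H_{\psi(g)}(u,u)=\psi'(g)H_g(u,u)+\psi''(g)\Gamma(g,u)^2$ (note: the paper's stated Lemma \ref{chain.rule} has a typo here, writing $\Gamma(u)^2$ where it should be $\Gamma(a,u)^2$; your version is the correct one), and isolate the residual identity $\tfrac14\Gamma(\Gamma(u))+\tfrac12H_{\Gamma(u)}(u,u)=\Gamma(u,H_u(u,u))$ explicitly. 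Both this identity and the coefficient arithmetic $(p-2)(p-4)-(p-2)^2+p(p-2)=(p-2)^2$ check out, so the argument goes through.
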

\begin{proof}
	We can assume that $\Gamma(u)(x)=1$, since both sides scale in the same way. In an environment of $x$ we have that $u\in C^{3,\alpha}$, so we can perform all of the following computations. Since $L$ is a diffusion operator, we have
	\begin{align*}
	L(\Gamma(u)^{\frac{p}{2}})&=\frac{p}{2}L(\Gamma(u))+\frac{1}{4}p(p-2)\Gamma(\Gamma(u),\Gamma(u))\\&=p(\Gamma_2(u)+\Gamma(u,Lu))+\frac{1}{4}p(p-2)\Gamma(\Gamma(u),\Gamma(u)).
	\end{align*}
	For the next calculation we use the chain rule (see {Lemma \ref{chain.rule}} below)
	\begin{align*}
	H_{\Gamma(u)^{\frac{p}{2}}}(u,u)&=\Gamma(u,\Gamma(u,{\Gamma(u)^{\frac{p}{2}}}))
	-\frac{1}{2}\Gamma(\Gamma(u)^{\frac{p}{2}},\Gamma(u))
	\\&=\frac{p}{2}\Gamma(u,\Gamma(u,\Gamma(u))\Gamma(u)^{\frac{p-2}{2}})
	-\frac{1}{4}p\Gamma(\Gamma(u),\Gamma(u))
	\\
	&=\frac{p}{2}\Gamma(u,\Gamma(u,\Gamma(u)))+p(p-2)H_u(u,u)^2
	-\frac{1}{4}p\Gamma(\Gamma(u),\Gamma(u)).
	\end{align*}
	Finally,
	\begin{align*}
	\Gamma(L_pu,u)=&\Gamma(\Gamma(u)^{\frac{p-2}{2}}L(u)+(p-2)\Gamma(u)^{\frac{p-4}{2}}H_u(u,u),u)
	\\
	=&\Gamma(u,Lu)+\frac{1}{2}(p-2)Lu\Gamma(u,\Gamma(u))+(p-2)\frac12\Gamma(u,\Gamma(u,\Gamma(u)))\\&+(p-2)\frac{p-4}{2}H_u(u,u)\Gamma(u,\Gamma(u))
	\\=&\Gamma(u,Lu)+(p-2)LuH_u(u,u)+(p-2)\frac12\Gamma(u,\Gamma(u,\Gamma(u)))\\&+
	(p-2)(p-4)H_u(u,u)^2.
	\end{align*}
	Now, using that $A_u=H_u(u,u)$ for $\Gamma(u)=1$
	and the fact that $(p-2)(p-4)-(p-2)^2+p(p-2)=(p-2)^2$ for the $A_u$ terms on
	the right-hand side, the claim follows.  \end{proof} 
In the proof, we have used
\begin{lemma}\label{chain.rule}
	Let $a,b,c\in C^2(M)$ and $f:\mathbb{R}\to\mathbb{R}$ be smooth, then
	$$\Gamma(a,bc)=\Gamma(a,b)c+\Gamma(a,c)b
	$$
	and $$
	\Gamma(f(a),b)=f'(a)\Gamma(a,b).
	$$
	Furthermore, we have
	$$
	H_{ab}(u,u)=bH_a(u,u)+aH_b(u,u)+2\Gamma(u,a)\Gamma(u,b)
	$$
	and 
	$$
	H_{f(a)}(u,u)=f'(a)H_a(u,u)+f''(a)\Gamma(u)^2.
	$$
\end{lemma}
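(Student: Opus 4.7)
The entire lemma is an algebraic consequence of the defining diffusion property $L(\psi(f_1,\dots,f_r))=\sum_i \partial_i\psi\,L(f_i)+\sum_{i,j}\partial_i\partial_j\psi\,\Gamma(f_i,f_j)$ combined with the polarisation $\Gamma(a,b)=\tfrac12(L(ab)-aL(b)-bL(a))$, so the plan is simply to specialise the diffusion identity to the right polynomial or composite test function in each case and to unfold the definition of the Hessian.

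For the first identity I would expand $L(abc)$ by applying the diffusion rule with $\psi(x,y,z)=xyz$, and $L(bc)$ by applying it with $\psi(y,z)=yz$, then substitute into $\Gamma(a,bc)=\tfrac12(L(abc)-aL(bc)-bcL(a))$. All first-order $L$-terms and the $\Gamma(b,c)$ contribution cancel, leaving exactly $c\Gamma(a,b)+b\Gamma(a,c)$. For the second identity I would apply the diffusion rule twice: first with $\psi(y)=f(y)$ to obtain $L(f(a))=f'(a)L(a)+f''(a)\Gamma(a)$, and then with $\psi(y,z)=f(y)z$ to expand $L(f(a)b)$. Substituting into $\Gamma(f(a),b)=\tfrac12(L(f(a)b)-f(a)L(b)-bL(f(a)))$ makes every term cancel except the single cross term $2f'(a)\Gamma(a,b)$, which gives $f'(a)\Gamma(a,b)$ after the factor $\tfrac12$.

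For the Hessian identities I would unfold the definition $H_f(u,u)=\Gamma(u,\Gamma(f,u))-\tfrac12\Gamma(f,\Gamma(u,u))$ and apply the first two identities to each occurrence of $\Gamma$ whose first slot is a product or a composition. In the product case, $\Gamma(ab,u)=a\Gamma(b,u)+b\Gamma(a,u)$ and analogously for $\Gamma(ab,\Gamma(u,u))$; pushing $\Gamma(u,\cdot)$ through the scalar factors $a$ and $b$ by the derivation rule then contributes two cross terms $\Gamma(u,a)\Gamma(u,b)$, while the remaining pieces reassemble into $aH_b(u,u)+bH_a(u,u)$. In the composition case, $\Gamma(f(a),\cdot)=f'(a)\Gamma(a,\cdot)$ factors cleanly; the only non-linear contribution arises when the outer $\Gamma(u,\cdot)$ differentiates the scalar coefficient $f'(a)$, and since $\Gamma(u,f'(a))=f''(a)\Gamma(u,a)$ this produces the stated quadratic $f''$-term, with the remainder collapsing into $f'(a)H_a(u,u)$.

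No step poses a real obstacle; the lemma is pure bookkeeping within the algebra of $L$ and $\Gamma$. The only point of care is to remember that in the Hessian identities the "vector" is encoded by $u$ through $\Gamma(u,\cdot)$, so derivatives of scalar coefficients of the form $\Gamma(u,a)$ or $\Gamma(u,f'(a))$ must be tracked consistently when the derivation rule is invoked inside the outer $\Gamma(u,\cdot)$; once those contributions are collected, the claimed formulas fall out immediately.
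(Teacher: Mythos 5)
Your approach is exactly the paper's, which dispatches the lemma with the single line that it ``follows directly from the diffusion property of $L$''; you have simply written out that computation term by term, and it is correct. One small sharpening is worth recording: carrying your last calculation through gives $\Gamma(u,f'(a))\Gamma(a,u)=f''(a)\Gamma(u,a)^2$, so the final identity should in fact read $H_{f(a)}(u,u)=f'(a)H_a(u,u)+f''(a)\Gamma(u,a)^2$; the $\Gamma(u)^2$ in the lemma statement is a typo (harmless in the paper, where every application takes $a=u$ or normalizes $\Gamma(u)=1$), so be careful not to conflate the two when you describe $f''(a)\Gamma(u,a)^2$ as ``the stated quadratic $f''$-term.''
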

\begin{proof}
	This follows directly from the diffusion property of $L$. \end{proof} 
\indent
To apply the maximum principle argument, we will have to estimate the term $\mathcal{L}^u_{p}(\Gamma(u)^{\frac{p}{2}})$ from below. Afterwards, we will rewrite the inequality in terms of a model function which satisfies a certain differential equation and can be expressed in terms of $u$. 
We can replace the $L_p(u)$ terms by $-u|u|^{p-1}$ and bearing in mind that the first derivatives of a function vanish at a maximum point, we will be able to replace the $A_u$ terms, too. Hence, the last ingredient is a good estimate for the $\Gamma_2$ term.
\begin{lemma}\label{improved.BE}
	Let $u$ be the first eigenfunction of $L_p$ and consider a point $x\in M$ with  $\Gamma(u)(x)\neq0$. Assume that $L$ satisfies $BE(0,N)$ for some $N\in[1,\infty]$. Let $n\geq N$. If $n>1$, we have
	$$
	\Gamma(u)^{p-2}\bigg(\Gamma_2(u,u)+p(p-2)A_u^2\bigg)\geq\frac{(L_p(u))^2}{n}+\frac{n}{n-1}\bigg(\frac{L_p(u)}{n}-(p-1)\Gamma(u)^{\frac{p-2}{2}}A_u\bigg)^2,
	$$	
	where we use the convention $1/\infty=0$ and $\infty/\infty=1$.
	For $n=1$, we get
	$$
	\Gamma(u)^{p-2}\bigg(\Gamma_2(u,u)+p(p-2)A_u^2\bigg)\geq{(L_p(u))^2}.
	$$	
\end{lemma}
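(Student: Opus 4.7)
The plan is to reduce the claimed inequality, by purely algebraic manipulation, to the standard Bakry--Qian self-improvement of $\operatorname{BE}(0,N)$, which I would then invoke. Writing $X := L_pu$ and $Y := (p-1)\Gamma(u)^{(p-2)/2}A_u$ for brevity, a short calculation
\begin{align*}
\frac{X^2}{n} + \frac{n}{n-1}\bigg(\frac{X}{n} - Y\bigg)^2 = \frac{X^2 - 2XY + nY^2}{n-1} = Y^2 + \frac{(X-Y)^2}{n-1}
\end{align*}
combined with the simplification $X - Y = \Gamma(u)^{(p-2)/2}(Lu - A_u)$, immediate from $L_pu = \Gamma(u)^{(p-2)/2}(Lu + (p-2)A_u)$, shows that after dividing through by $\Gamma(u)^{p-2}$ and using the identity $(p-1)^2 - p(p-2) = 1$ the claim is equivalent to
\begin{align*}
\Gamma_2(u,u) \geq A_u^2 + \frac{(Lu - A_u)^2}{n-1}.
\end{align*}

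Next I would establish this reduced inequality as a consequence of the Bakry--Qian self-improvement of $\operatorname{BE}(0,N)$. At a fixed point $x$ with $\Gamma(u)(x) \neq 0$, the idea is to exploit the flexibility of the infimum defining $R_N$: for any smooth $g$ with $\Gamma(g)(x) = 0$, the perturbation $\phi = u + \epsilon g$ satisfies $\Gamma(\phi - u)(x) = 0$, so BE$(0,N)$ yields $\Gamma_2(\phi,\phi)(x) \geq (L\phi)^2(x)/N$ for every $\epsilon$. Expanding in $\epsilon$, rearranging, and optimizing the resulting quadratic in $\epsilon$ over an appropriate choice of $g$ leads precisely to
\begin{align*}
\Gamma_2(u,u) \geq A_u^2 + \frac{(Lu - A_u)^2}{N-1}.
\end{align*}
In the purely Riemannian case $L = \Delta_g$, the same bound is transparent from Sturm's Bochner formula (Theorem \ref{2.bochner.formula}): choosing a $\Gamma$-orthonormal frame with $e_1 = \nabla u/\sqrt{\Gamma(u)}$, we have $H_u^{11} = A_u$ and $\sum_{i\geq 2} H_u^{ii} = Lu - A_u$; retaining only diagonal terms in $|H_u|_{HS}^2$ and applying Cauchy--Schwarz to the tail $\sum_{i\geq 2}(H_u^{ii})^2$ produces the claim.

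Since the right-hand side $A_u^2 + (Lu - A_u)^2/(n-1)$ is decreasing in $n$, the case $n \geq N$ follows automatically from the case $n = N$. For the borderline case $n = 1$, I would argue by passing to the limit $n \to 1^+$: the term $(Lu - A_u)^2/(n-1)$ blows up unless $Lu = A_u$, so BE$(0,1)$ forces both $Lu = A_u$ and $\Gamma_2(u,u) \geq A_u^2 = (Lu)^2$. Substituting back and using $1 + p(p-2) = (p-1)^2$ then gives $\Gamma(u)^{p-2}(\Gamma_2(u,u) + p(p-2)A_u^2) \geq \Gamma(u)^{p-2}(p-1)^2 A_u^2 = (L_pu)^2$, as required. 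The main obstacle is carrying out the self-improvement in the general diffusion setting rather than the purely Riemannian one: the extra degree of freedom built into the infimum defining $R_N$ is essential for absorbing the first-order drift of $L$ into the effective dimension, which is the heart of the Bakry--Qian technique.
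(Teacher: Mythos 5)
Your proposal is correct, and the core mechanism is the same as the paper's — the Bakry--Qian self-improvement of $\operatorname{BE}(0,N)$, implemented by perturbing $u$ at the point $x$ and forcing a non-positive discriminant — but the two arguments organize the computation differently. You first strip out the $p$-dependence: with $X=L_pu$, $Y=(p-1)\Gamma(u)^{(p-2)/2}A_u$, the identities $\frac{X^2}{n}+\frac{n}{n-1}\bigl(\frac{X}{n}-Y\bigr)^2=Y^2+\frac{(X-Y)^2}{n-1}$, $X-Y=\Gamma(u)^{(p-2)/2}(Lu-A_u)$ and $(p-1)^2-p(p-2)=1$ reduce the statement to the $p$-independent improved Bochner inequality $\Gamma_2(u,u)\geq A_u^2+(Lu-A_u)^2/(n-1)$, which you then obtain from the classical self-improvement; the paper instead runs the self-improvement \emph{inside} the $p$-formalism, rewriting $(Lv)^2/N$ in terms of $\mathcal L^u_p(v)$ and $H_v(u,u)$, taking $v=\phi(u)$, and optimizing over $\phi'(u(x))$ with $\phi''(u(x))=1$, so that the $p$-dependent statement comes out directly. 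Your route is algebraically cleaner and makes the purely linear input visible; the paper's keeps everything in the variables ($\mathcal L^u_p$, $A_u$) that the subsequent gradient comparison actually uses. To make your sketch complete you should fix the perturbation explicitly — for instance $g=(u-u(x))^2$ works: normalizing $\Gamma(u)(x)=1$ one gets $Lg(x)=2$, $\Gamma_2(u,g)(x)=2A_u$, $\Gamma_2(g,g)(x)=4$, and the discriminant condition for $\epsilon\mapsto\Gamma_2(u+\epsilon g,u+\epsilon g)-(L(u+\epsilon g))^2/N\geq 0$ is exactly the desired bound. One small conceptual correction: the constraint $\Gamma(g)(x)=0$ is a convenient normalization, not the source of the improvement. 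Since $\operatorname{BE}(0,N)$ already forces $\Gamma_2(\phi,\phi)\geq (L\phi)^2/N$ for \emph{every} $\phi$ (take $\phi$ itself as a competitor in the infimum defining $R_N(\phi,\phi)$), the extra information is extracted from the freedom to tune the second-order jet of the perturbation at $x$, not from the infimum structure per se. Your monotonicity-in-$n$ observation, the limiting argument for $n=1$, and the frame-based cross-check via Sturm's Bochner formula are all correct.
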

\begin{proof} Since $\Gamma(u)(x)\neq 0$, it holds that $u\in C^{2,\alpha}$ near $x$, so all of the following calculations can be performed. Since both sides scale in the same way, we can assume that
	$\Gamma(u)(x)=1$. The condition BE$(0,N)$ implies BE$(0,n)$, so we can assume that $n=N$. If $N=1$, then the condition $\operatorname{BE}(0,N)$ implies $\operatorname{BE}(0,\dim(M))$ which gives $A_u=\operatorname{tr}H_u=Lu$ and the result follows immediately by using the estimate $\Gamma_2(u)\geq (Lu)^2$. If $N=\infty$, we use the trivial estimate $\Gamma_2(u)\geq|H_u|^2_{HS}\geq A_u^2$, so we can assume that $1<N<\infty$. The idea is that the curvature-dimension inequality has a self-improvement property (see \cite{Bakry1} for the linear case): Let $v$ be an arbitrary smooth function. Since $L$ satisfies $\operatorname{BE}(0,N)$, we have at $x$
	\begin{align*}
	\Gamma_2(v,v)\geq \frac{1}{N}(Lv)^2&=\frac{1}{N}(\mathcal{L}^u_p(v))^2-\frac{2(p-2)}{N}LvH_v(u,u)-\frac{(p-2)^2}{N}H_v(u,u)^2 \\
	\\&=\frac{1}{N}(\mathcal{L}^u_p(v))^2-\frac{2(p-2)}{N}\mathcal{L}^u_pvH_v(u,u)+\frac{(p-2)^2}{N}H_v(u,u)^2
	\\&=:\frac{1}{N}(\mathcal{L}^u_p(v))^2+C(v,v).
	\end{align*}
	Now we define the quadratic form $B(v,v)=\Gamma_2(v,v)-(\mathcal{L}^u_p(v))^2/N-C(v,v)$ and let $\phi\in C^\infty(\mathbb{R})$ be a smooth function. By assumption, we have $B(\phi(u),\phi(u))(x)\geq 0$. Using that $\Gamma(u)(x)=1$ and $H_u(u,u)(x)=A_u(x)$, we have the following identities at $x$: $$\Gamma_2(\phi(v),\phi(v))=\phi'^2\Gamma_2(u,u)+2\phi'\phi''A_u+\phi'',$$
	$$
	\mathcal{L}^u_p(\phi(u))=\phi'L_p(u)+(p-1)\phi'',\qquad H_{\phi(u)}(u,u)=\phi'A_u+\phi''.
	$$
	This gives
	\begin{align}
	B(\phi(u),\phi(u))=&\phi'^2 \notag B(u,u)+2\phi'\phi''\bigg(A_u-\frac{p-1}{N}L_pu+\frac{(p-2)}{N}\big((p-1)A_u+L_pu\big)\\&-\frac{(p-2)^2}{N}A_u\bigg)+\phi''^2\bigg(1-\frac{1}{N}(p-1)^2+2\frac{(p-2)(p-1)}{N}-\frac{(p-2)^2}{N}\bigg) \notag
	\\=&\phi'^2 B(u,u)+2\phi'\phi''\bigg(-\frac{L_p u}{N}+\big(1+\frac{p-2}{N}\big)A_u\bigg)+\phi''^2\frac{N-1}{N}. \label{discriminant}
	\end{align}
	Now for any $a$, we can choose a function $\phi$ such that $\phi'(u(x))=a$ and $\phi''(u(x))=1$, so equation (\ref{discriminant}) becomes a non-negative, quadratic polynomial in $a$ and hence must have a non-negative discriminant, that is:
	$$
	4B(u,u)\frac{N-1}{N}\geq 4 \bigg(\frac{L_p u}{N}-\big(1+\frac{p-2}{N}\big)A_u\bigg)^2.
	$$
	This, however, is equivalent to
	\begin{align*}
	\Gamma_2(u,u)+p(p-2)A_u^2\geq& \frac{1}{N}(L_pu)^2+p(p-2)A_u^2+C(u,u)\\&+\frac{N}{N-1}
	\bigg(\frac{L_p u}{N}-\big(1+\frac{p-2}{N}\big)A_u\bigg)^2
	\\=&\frac{1}{N}(L_pu)^2+\frac{N}{N-1}
	\bigg(\frac{L_p u}{N}-(p-1)A_u\bigg)^2,
	\end{align*}
	where the last equality can be verified by direct computation.
\end{proof}
\begin{remark}
	The advantage of the self-improvement property is that it automatically gives a sharp estimate which is not immediate if we chose a local framework and discard certain terms. Although the estimate for $N=1$ seems slightly weaker, it is in fact just as strong since $L_pu-(p-1)\Gamma(u)^\frac{p-2}{2}A_u\equiv 0$ in one dimension.
\end{remark}
Before we prove the gradient comparison, we summarize the results we have
obtained so far:
\begin{corollary}\label{estimate.summary}
	Let $u$ be an eigenvalue of $L_p$, $x\in M$ and $\Gamma(u)(x)\neq 0$ and let $n\geq N$ and $n>1$. If $1<p<2$, let $u(x)\neq 0$. Then we have at $x$
	\begin{align*} 
	\frac{1}{p}\mathcal{L}^u_p(\Gamma(u)^{\frac{p}{2}})
	\geq &-\lambda(p-1)|u|^{p-2}\Gamma(u)^{\frac{p}{2}}+\lambda(p-2)u|u|^{p-2}
	\Gamma(u)^{\frac{p-2}{2}}A_u+\frac{\lambda^2u^{2p-2}}{n}\\&+\frac{\lambda^2u^{2p-2}}{n(n-1)}
	+2\frac{(p-1)\lambda}{n-1}u|u|^{p-2}\Gamma(u)^{\frac{p-2}{2}}A_u
	+\frac{n}{n-1}(p-1)^2\Gamma(u)^{\frac{2p-4}{2}}A_u^2
	\\=&-\lambda(p-1)|u|^{p-2}\Gamma(u)^{\frac{p}{2}}+\lambda \frac{(n+1)(p-1)-(n-1)}{(n-1)}u|u|^{p-2}
	\Gamma(u)^{\frac{p-2}{2}}A_u\notag\\&+\frac{\lambda^2u^{2p-2}}{n-1}
	+\frac{n}{n-1}(p-1)^2\Gamma(u)^{\frac{2p-4}{2}}A_u^2. \notag
	\end{align*}
\end{corollary}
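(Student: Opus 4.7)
The plan is to assemble the corollary from the three ingredients already proved: the $p$-Bochner formula (Lemma 3.1), the chain rule (Lemma 3.2), and the improved curvature-dimension estimate (Lemma 3.3). The statement is pointwise at $x$ with $\Gamma(u)(x)\neq 0$, and the extra hypothesis $u(x)\neq 0$ in the case $1<p<2$ is imposed precisely so that Lemma 2.7 yields enough regularity (at least $C^{2,\alpha}$ near $x$) to make all of the algebraic manipulations below legitimate.

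First, I would apply the $p$-Bochner formula to rewrite
$$\tfrac{1}{p}\mathcal{L}^u_p(\Gamma(u)^{p/2}) = \Gamma(u)^{(p-2)/2}\bigl(\Gamma(L_pu,u)-(p-2)L_pu\cdot A_u\bigr)+\Gamma(u)^{p-2}\bigl(\Gamma_2(u)+p(p-2)A_u^2\bigr).$$
At a point with $\Gamma(u)\neq 0$, Remark 2.8 gives the pointwise eigenvalue equation $L_pu=-\lambda u|u|^{p-2}$. Applying the chain rule of Lemma 3.2 to $f(t)=-\lambda t|t|^{p-2}$, whose derivative is $f'(t)=-\lambda(p-1)|t|^{p-2}$, produces $\Gamma(L_pu,u)=-\lambda(p-1)|u|^{p-2}\Gamma(u)$. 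Substituting both identities into the first bracket immediately yields the two ``linear'' contributions $-\lambda(p-1)|u|^{p-2}\Gamma(u)^{p/2}$ and $\lambda(p-2)u|u|^{p-2}\Gamma(u)^{(p-2)/2}A_u$ on the right-hand side of the claimed inequality.

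Next, I would bound the $\Gamma_2$-bracket from below via Lemma 3.3, giving
$$\Gamma(u)^{p-2}\bigl(\Gamma_2(u)+p(p-2)A_u^2\bigr) \geq \frac{(L_pu)^2}{n}+\frac{n}{n-1}\left(\frac{L_pu}{n}-(p-1)\Gamma(u)^{(p-2)/2}A_u\right)^2.$$
Expanding the square and inserting $L_pu=-\lambda u|u|^{p-2}$ produces three pieces: a pure $\lambda^2u^{2p-2}$ contribution split as $\tfrac{1}{n}+\tfrac{1}{n(n-1)}$, a cross term $\tfrac{2(p-1)\lambda}{n-1}u|u|^{p-2}\Gamma(u)^{(p-2)/2}A_u$, and the quadratic $\tfrac{n}{n-1}(p-1)^2\Gamma(u)^{p-2}A_u^2$. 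Adding these to the linear contributions gives exactly the first displayed inequality of the corollary.

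The second equality then follows by cosmetic simplification: collecting the two $\lambda^2u^{2p-2}$ terms via $\tfrac{1}{n}+\tfrac{1}{n(n-1)}=\tfrac{1}{n-1}$, and combining the two $A_u$-linear coefficients through
$$\lambda(p-2)+\frac{2(p-1)\lambda}{n-1}=\lambda\,\frac{(p-2)(n-1)+2(p-1)}{n-1}=\lambda\,\frac{(n+1)(p-1)-(n-1)}{n-1}.$$
There is no genuine obstacle here; all the geometric content sits in Lemmas 3.1 and 3.3. The only care required is bookkeeping of signs and of the non-integer powers of $|u|$, which is precisely why the hypothesis $u(x)\neq 0$ is imposed when $1<p<2$.
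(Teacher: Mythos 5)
Your proposal is correct and follows exactly the same route as the paper: apply the $p$-Bochner formula (Lemma 3.1), substitute the strong eigenvalue equation $L_pu=-\lambda u|u|^{p-2}$ and the resulting identity $\Gamma(L_pu,u)=-\lambda(p-1)|u|^{p-2}\Gamma(u)$, bound the $\Gamma_2$-bracket below with Lemma 3.3, and collect terms. The paper's own proof is a one-liner citing these ingredients; your write-up simply spells out the arithmetic, including the correct recombination $(p-2)(n-1)+2(p-1)=(n+1)(p-1)-(n-1)$.
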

\begin{proof}
	This follows directly from  
	{Lemma \ref{p.bochner.formula}}, {Lemma \ref{improved.BE}}, and the strong eigenvalue equation $L_p(u)=-\lambda u|u|^{p-2}$. We remark that $\Gamma(L_p,u)=\Gamma(-\lambda u|u|^{p-2},u)=-\lambda(p-1)|u|^{p-2}\Gamma(u)$. 
\end{proof} 
We are now able to prove the gradient comparison with a  suitable one-dimensional
model function. The proof is motivated by \cite[Theorem 4.1]{val1}.
\begin{theorem}[Gradient comparison theorem]\label{grad.comp.thm}
	Let $\lambda$ be the principal eigenvalue of $L_p$ with Neumann boundary conditions and $u$ be a corresponding eigenfunction. Assume that $L$ satisfies BE$(0,N)$.  Let $n\geq N$ with $n>1$, $a\geq 0$, and
	$w=w_a$ be the solution of the one-dimensional model equation \ref{model.ode},
	with either $T=-(n-1)/t$ or $T\equiv 0$. If $n=\infty$, we let $T\equiv 0$. Let $b=b(a)$ be the first
	root of $w'$ after $a$, as above. Now, if 
	$[\operatorname{min}(u),\operatorname{max}(u)]\subset[w(a),w(b)]$, then the following inequality holds on all of $M$:
	$$\Gamma (w^{-1} \circ u)\leq1.$$ 
\end{theorem}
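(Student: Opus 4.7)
The plan is to show the equivalent inequality $\Gamma(u) \leq \psi(u)^2$ on $M$, where $\psi(t) := w'(w^{-1}(t))$ is defined and non-negative on $[w(a), w(b)] \supset [\min u, \max u]$. Equivalently, define $P(x) := \Gamma(u)(x) - \psi(u(x))^2$; the target is $P \leq 0$ pointwise. I will argue by contradiction, assuming $\sup_M P > 0$ and deriving an incompatibility via a maximum principle for the linearized operator $\mathcal{L}^u_p$, which is a (possibly degenerate) elliptic operator for $p > 1$ since its principal symbol has eigenvalues $1$ and $p-1$. As $\mathcal{L}^u_p$ is defined only where $\Gamma(u) \neq 0$, the first observation is that $P(x_0) > 0$ forces $\Gamma(u)(x_0) > \psi(u(x_0))^2 \geq 0$, so $u$ enjoys the full regularity of Lemma \ref{existence.regularity} near $x_0$; the borderline case $u(x_0) = 0$ with $1 < p < 2$ will be handled by an approximation argument in the spirit of \cite{val1}.

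Suppose first that $x_0 \in M^\circ$. The critical point condition $\nabla P(x_0) = 0$, together with the identity $\Gamma(\Gamma(u), v) = 2H_u(u, v)$, yields $A_u(x_0) = (\psi \psi')(u(x_0))$. The maximum principle gives $\mathcal{L}^u_p(P)(x_0) \leq 0$, and it remains to compute both sides. For $\mathcal{L}^u_p(\Gamma(u)^{p/2})$ I invoke the lower bound in Corollary \ref{estimate.summary}, which after the substitution $A_u = \psi \psi'$ becomes an explicit polynomial in $u$, $\psi$, $\psi'$. For $\mathcal{L}^u_p(\psi(u)^2)$, the identity $\mathcal{L}^u_p(\eta(u)) = \eta'(u) L_p u + (p-1)\eta''(u) \Gamma(u)^{p/2}$ (derived from the chain rule of Lemma \ref{chain.rule}) reduces it to $L_pu = -\lambda u|u|^{p-2}$ and the derivatives of $\psi^2$ evaluated along $u$. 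The one-dimensional equation (\ref{model.ode}), rewritten as a first-order ODE for $\psi$ as a function of $w$, is then used to rearrange these derivatives; the drift term $T = -(n-1)/t$ corresponds precisely to the improved dimensional error $\frac{n}{n-1}(L_p u/n - (p-1)A_u)^2$ from Lemma \ref{improved.BE}. After these substitutions, the inequality $\Gamma(u)(x_0) > \psi(u(x_0))^2$ (the strict positivity of $P$) is used exactly once to strictly compare the two expressions, producing $\mathcal{L}^u_p(P)(x_0) > 0$ and the desired contradiction.

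For the boundary case $x_0 \in \partial M$, the Neumann condition $\Gamma(u,\nu)(x_0) = 0$ and the relation $\Gamma(\nu, \Gamma(u)) = 2H_u(u,\nu)$ give $\Gamma(\nu, P)(x_0) = -2II(u,u)(x_0)$, which is non-negative by convexity and hence compatible with $x_0$ being a maximum; I dispose of this by adding a perturbation $-\epsilon \eta$ to $P$, where $\eta \in C^\infty(M)$ is an auxiliary function with $\Gamma(\eta,\nu) > 0$ on $\partial M$, so that the maximum of $P - \epsilon\eta$ necessarily lies in $M^\circ$ for every $\epsilon > 0$; applying the interior argument to $P - \epsilon\eta$ and letting $\epsilon \to 0^+$ yields the result (for strictly convex boundary a direct argument suffices). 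The regularity issue at $\{u = 0\}$ when $1 < p < 2$ is circumvented by first approximating with $w_{a + \delta}$ and appealing to the continuous dependence of the model solutions stated before Theorem \ref{ode.asymptotics}. The principal obstacle is entirely the algebraic bookkeeping in the interior computation: matching the self-improved Bakry-\'Emery terms of Lemma \ref{improved.BE} against the second-derivative terms of $\psi(u)^2$ forced by the ODE is exactly what dictates the choice of drift $T = -(n-1)/t$, and any sloppiness here destroys the sharp constants.
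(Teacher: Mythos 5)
Your framework matches the paper's skeleton: a maximum-principle argument for the degenerate-elliptic operator $\mathcal{L}^u_p$ applied at an interior or boundary maximum of a suitable auxiliary quantity, using Corollary \ref{estimate.summary}, the chain rule, the strong eigenvalue equation, and the model ODE. But there is a genuine gap at the decisive step. Your auxiliary quantity is $P = \Gamma(u) - \psi(u)^2$ (with your $\psi(t) = w'(w^{-1}(t))$, which corresponds to $\phi^{1/p}$ in the paper's notation), and you assert without derivation that at a maximum $x_0$ of $P$ with $P(x_0)>0$, the combination of Corollary \ref{estimate.summary}, the ODE, and ``the strict positivity of $P$ used exactly once'' yields $\mathcal{L}^u_p(P)(x_0)>0$. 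This is precisely the non-trivial part of the proof, and the way the paper makes it work is by \emph{not} taking your $P$, but rather $F := \psi(u)\bigl(\Gamma(u)^{p/2}-\phi(u)\bigr)$ with an auxiliary multiplier $\psi(u)$ whose logarithmic derivative $h$ is a free function to be chosen. After inserting the $p$-Bochner inequality and the ODE, the paper reaches $0 \geq \mathcal{L}^u_p(F) \geq a + bF + cF^2$ with $a=0$; the coefficients $b$ and $c$ involve $\psi$, $\psi'$, $\psi''$, and without the multiplier (i.e.\ $\psi\equiv 1$, $\psi'=\psi''=0$) one has $c=0$ and $b$ contains the sign-unfavourable term $-\lambda p(p-1)|u|^{p-2}$, so the inequality $0\geq bF$ does not force $F\leq 0$. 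Securing $b,c>0$ is exactly the content of Lemma \ref{lemma.grad.comp} (which in \cite{val1} is a non-trivial Pr\"ufer-transformation analysis of the ODE for $f=-h(w)w'$), and your argument contains no analogue of it.

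Two smaller points. First, working with $\Gamma(u) - \psi(u)^2$ rather than $\Gamma(u)^{p/2} - \phi(u)$ is not merely cosmetic: the $p$-Bochner formula of Lemma \ref{p.bochner.formula} controls $\mathcal{L}^u_p\bigl(\Gamma(u)^{p/2}\bigr)$, not $\mathcal{L}^u_p\bigl(\Gamma(u)\bigr)$, so the exponent should be $p/2$ to make the Bochner term usable without extra chain-rule corrections that degrade the structure. Second, your boundary treatment by adding $-\epsilon\eta$ with $\Gamma(\eta,\nu)>0$ on $\partial M$ is a reasonable alternative to the paper's direct argument (which shows $\Gamma(F,\tilde\nu)(x_0)=0$ from convexity and the Neumann condition, then uses Hopf-type reasoning), but the paper's version is cleaner because it does not require the existence of such an $\eta$ globally on $M$ and avoids passing to a limit in $\epsilon$ inside an already delicate limiting argument in $\xi$ and $\delta$. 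The essential missing ingredient in your proposal remains the auxiliary multiplier and the existence lemma that makes the coefficient signs come out right; without it the maximum-principle contradiction cannot be closed.
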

\begin{proof} We can assume that $[\operatorname{min}(u),\operatorname{max}(u)]
	\subset(w(a),w(b))$ by replacing $u$ by $\xi u$ and  letting $\xi\nearrow 1$ afterwards. The regularity theory for ordinary differential equations gives that $w$ is smooth on $(a,b)$. Using the chain rule we see that it is equivalent to prove
	$$
	\Gamma(u)^{\frac12}(x)\leq w'(w^{-1}(u(x)))
	$$
	for all $x\in M$. In order to prove this,
	let $\phi(u(x)):=		w'(w^{-1}(u(x)))^p$ and
	$\psi\in C^2(\mathbb{R})$
	be a positive function which will be specified later. Define
	$$
	F:=\psi(u)(\Gamma (u)^{\frac{p}{2}}-\phi(u)).
	$$
	It suffices to show that $F\leq 0$. Since $M$ is compact, $F$ attains its maximum
	in $x\in M$. Furthermore, we have $\phi(u)>0$, so it suffices to consider the case
	$\Gamma (u)(x)>0$. If $p>2$, then we have $u\in C^{3,\alpha}$ around $x$ and all the following computations can be performed. We will explain below how to modify the proof in the case $1<p<2$. At the point $x$, we have 
	$$
	\Gamma(F,u)(x)=0, \qquad
	\mathcal{L}^u_p(F)(x)\leq 0.
	$$
	This is obvious if $x$ lies in the interior: $\Gamma$ is induced by a Riemannian metric on $T^*M$ so we get $\Gamma(F,\cdot)(x)=0$, which implies the first identity. On the other hand, $\mathcal{L}^u_p$ is elliptic away from critical points of $u$ and the first-order derivatives of $F$ vanish, which implies the inequality $\mathcal{L}^u_p(F)(x)\leq 0$. If $x$ lies on the boundary, we need to be more careful: it is immediate that $\Gamma(F,\cdot)(x)$ vanishes in all directions tangent to the boundary, in particular, $\Gamma(F,u)(x)=0$ since $\Gamma(u,\tilde \nu)|_{\partial M}\equiv 0$. Moreover, the Neumann boundary conditions and the convexity of $\partial M$ imply at $x$
	\begin{align}
	0\leq
	\Gamma(F,\tilde \nu)&=\psi'(u)\Gamma(u,\tilde \nu)\frac{F}{\psi(u)}-\psi(u)\phi'(u)\Gamma(u,\tilde \nu)
	+\frac{p}{2}\psi(u)\Gamma(u)^{\frac{p-2}{2}}\Gamma(\Gamma(u),\tilde \nu) \notag \\
	&=-p\psi(u)II(u,u)\leq0, \notag
	\end{align}
	where $\tilde \nu$ is the outward normal function at $x$. This gives $\Gamma(F,\tilde \nu)(x)=0$ and since $x$ is a maximum point, this implies that the second derivative in normal direction must be non-positive. Obviously, all the second-order derivatives in tangent directions must be non-positive as well, so the ellipticity yields   $\mathcal{L}^u_p(F)(x)\leq 0$, as desired. 
	Now the identity $\Gamma(F,u)(x)=0$ and the product and chain rule imply at $x$ that
	$$
	0
	=\psi'(u)\frac{F}{\psi(u)}\Gamma(u)+\psi(u)\frac{p}{2}\Gamma(u)^{\frac{p-2}{2}}\Gamma(\Gamma(u),u)-\psi(u)\phi'(u)\Gamma(u),
	$$
	which yields
	\begin{align}
	\label{first.der.zero}
	\Gamma (u)^{\frac{p-2}{2}}A_u=-\frac{1}{p}\bigg(\frac{\psi'}{\psi^2}F-\phi'\bigg).
	\end{align}
	Next, we would like to take a closer look at the inequality $\mathcal{L}^u_p(F)(x)\leq0$. We compute at $x$, using the diffusion property of $L$, that
	\begin{align*}
	L(F)=&\psi(u)(L(\Gamma(u)^{\frac{p}{2}})-L(\phi(u))
	+(\Gamma (u)^{\frac{p}{2}}-\phi(u))(\psi'(u)L(u)+\psi''(u)\Gamma(u))
	\\&+2\big(p\Gamma(u)^{\frac{p-2}{2}}\psi'(u)H_u(u,u)-\phi'(u)\psi'(u)\Gamma(u)\big).
	\end{align*}
	On the other hand, using the product and chain rule {Lemma \ref{chain.rule}}, we compute
	\begin{align*}
	H_F(u,u)=&(\psi'(u)H_u(u,u)+\psi''\Gamma(u)^2)(\Gamma(u)^{\frac{p}{2}}-\phi(u))
	+\psi(u)H_{\Gamma(u)^{\frac{p}{2}}}(u,u)\\&-\psi(u)H_{\phi(u)}(u,u)
	+2p\Gamma(u)^{\frac{p}{2}}\psi'(u)H_u(u,u)
	-2\psi'(u)\phi'(u)\Gamma(u)^2.
	\end{align*}
	Using these two identities as well as (\ref{first.der.zero}), the strong eigenvalue equation and $\Gamma(u)^{\frac{p}{2}}-\phi(u)=F/\psi(u)$ as well as $\Gamma(u)^{\frac{p}{2}}=F/\psi(u)
	+\phi(u)$, we obtain at $x$ that
	\begin{align}
	\mathcal{L}^u_p(F)=&p\psi(u)\frac{1}{p}\mathcal{L}^u_p(\Gamma(u)^{\frac{p}{2}}) \notag
	-\psi(u)\mathcal{L}^u_p(\phi(u))-\lambda F\frac{\psi'(u)}{\psi}u|u|^{p-2}
	\\&+(p-1)F\frac{\psi''(u)}{\psi(u)}\bigg(\frac{F}{\psi(u)}+\phi(u)\bigg)
	-2(p-1)F\frac{\psi'(u)^2}{\psi(u)^2}\bigg(\frac{F}{\psi(u)}+\phi(u)\bigg). \notag
	\end{align}
	Now the idea is to use {Corollary \ref{estimate.summary}} to estimate $\mathcal{L}^u_p(F)$ further and express the resulting inequality in terms of $w$. Since $w$ satisfies a differential equation, an appropriate choice of $\psi$ will enforce $F\leq 0$. If $n=\infty$ and $T\equiv 0$, the proof stays the same with the conventions $\infty/\infty=1$ and $1/\infty=0$. Noting that we have excluded the case $\Gamma(u)=0$ and using {Corollary \ref{estimate.summary}}, equation (\ref{first.der.zero}) as well as $\Gamma(u)^{\frac{p}{2}}=F/\psi(u)+\phi(u)$, we see that
	\begin{align}
	\frac{1}{p}\mathcal{L}^u_p(\Gamma(u)^{\frac{p}{2}})
	\geq &-\lambda(p-1)|u|^{p-2}\Gamma(u)^{\frac{p}{2}}+\lambda \frac{(n+1)(p-1)-(n-1)}{(n-1)}u|u|^{p-2}
	\Gamma(u)^{\frac{p-2}{2}}A_u \notag
	\\&+\frac{\lambda^2u^{2p-2}}{n-1}
	+\frac{n}{n-1}(p-1)^2\Gamma(u)^{\frac{2p-4}{2}}A_u^2 \notag
	\\=&\frac{\lambda^2u^{2p-2}}{n-1}+\lambda\frac{(n+1)(p-1)-(n-1)}{p(n-1)} \phi'u|u|^{p-2}
	\notag
	\\&+\frac{n}{n-1}\frac{(p-1)^2}{p^2}\phi'^2
	-\lambda \phi(p-1)|u|^{p-2} \notag
	\\&+F\bigg(-\lambda(p-1)|u|^{p-2}\frac{1}{\psi}-\lambda\frac{(n+1)(p-1)-(n-1)}{p(n-1)}\frac{\psi'}{\psi^2}u|u|^{p-2}
	\notag
	\\&-2\frac{n}{n-1}\frac{(p-1)^2}{p^2}\phi'\frac{\psi'}{\psi^2} \bigg) 
	+F^2\frac{n}{n-1}\frac{(p-1)^2}{p^2}\frac{\psi'^2}{\psi^4}. \label{applied.bochner.formula}
	\end{align}
	On the other hand, we easily verify that
	$$
	\mathcal{L}^u_p(\phi(u))=\phi'L_p(u)+(p-1)\phi''\Gamma(u)^\frac{p}{2}
	=-\lambda \phi'u|u|^{p-2}+(p-1)\phi''\phi+F(p-1)\frac{\phi''}{\psi}.
	$$
	Combining these identities and summing up the $u|u|^{p-2}$ terms, we get that
	\begin{align}
	0\geq\mathcal{L}^u_p(F)\geq&p\psi\bigg(\frac{\lambda^2u^{2p-2}}{n-1}+\lambda\frac{(n+1)(p-1)}{p(n-1)}\phi 'u|u|^{p-2}
	\notag\\&+\frac{n}{n-1}\frac{(p-1)^2}{p^2}\phi'^2
	-\lambda \phi(p-1)|u|^{p-2}-\frac{(p-1)}{p}\phi''\phi\bigg)\notag
	\\&+F\bigg(-\lambda p(p-1)|u|^{p-2}-\lambda\frac{(n+1)(p-1)}{n-1}\frac{\psi'}{\psi}u|u|^{p-2}\notag\\&-2\frac{n}{n-1}\frac{(p-1)^2}{p}\phi'\frac{\psi'}{\psi}-(p-1)\phi''
	+(p-1)\phi\bigg(\frac{\psi''}{\psi}-2\frac{\psi'^2}{\psi^2}\bigg)\bigg) \notag
	\\& +F^2 \frac{p-1}{\psi}\bigg(\frac{\psi''}{\psi}+\frac{\psi'^2}{\psi^2}\bigg(\frac{n(p-1)}{p(n-1)}-2\bigg)\bigg)
	\notag
	\\=:& a+bF+cF^2. \label{final.inequality}
	\end{align}
	The last part of the proof is similar to \cite[Theorem 4.1]{val1} and we only include it for the convenience of the reader. We consider the function $\phi(s):=w'(w^{-1}(s))^p$ and the chain rule gives
	$$
	\phi'(s)=\frac{p}{p-1}\Delta_p(w)(w^{-1}(s)), \qquad \phi''(s)=\frac{p}{p-1}\frac{(\Delta_p(w))'}{w'}(w^{-1}(s)).
	$$
	On the other hand, by our assumption there exists $t\in(a,b)$ with $w(t)=u(x)$,
	so we obtain at $x$ or $t$ respectively that
	\begin{align}
	\frac{a}{p\psi}=&\frac{\lambda^2w^{2p-2}}{n-1}+\lambda\frac{n+1}{n-1}\Delta_p(w)w|w|^{p-2}+\frac{n}{n-1}(\Delta_p(w))^2\notag\\&-\lambda (p-1)(w')^p|w|^{p-2}-(\Delta_p(w))'w'^{p-1}. \notag
	\end{align}
	Now since $T$ is one of the solutions of $T'=T^2/(n-1)$, that is, $T\equiv 0$ or $T=-(n-1)/t$, using $((w')^{p-1})'=\Delta_p w$, one directly verifies that
	\begin{align}
	\frac{a}{p\psi}=&\frac{1}{n-1}\bigg(\Delta_pw-Tw'^{p-1}+\lambda w|w|^{p-2}\bigg)\bigg(n\Delta_p w+Tw'^{p-1}\notag+\lambda w|w|^{p-2}\bigg)\\&
	-w'^{p-1}\bigg(\Delta_pw+T|w'|^{p-1}+\lambda w|w|^{p-2}\bigg)'\notag= 0.\notag
	\end{align}
	In order to treat the terms $b$ and $c$, we define
	$$
	X(t):=\lambda^{\frac{1}{p-1}}\frac{w(t)}{w'(t)},
	\qquad \psi(t)=\exp\bigg(\int_{0}^{t}h(s)ds\bigg), \qquad f(t)=-h(w(t))w'(t),
	$$ for some $h$ which is still to be determined. Using that $w$ solves (\ref{model.ode}), we compute
	\begin{align}
	f'(t)&=-h'(w(t))w'(t)^2+\frac{f(t)}{p-1}(T-X|X|^{p-2}). \label{f.der}
	\end{align}
	Now we recall that by definition
	$$
	c=\frac{p-1}{\psi}\bigg(\frac{\psi''}{\psi}+\frac{\psi'^2}{\psi^2}\big(\frac{n(p-1)}{p(n-1)}-2\big)\bigg),
	$$
	such that (\ref{f.der}) and a direct computation give
	\begin{align}
	\frac{c(w(t))\psi(w(t))}{p-1}w'(t)^2=\frac{f}{p-1}(T-X|X|^{p-2})+f^2\bigg(\frac{p-n}{p(n-1)}\bigg)-f'=:\alpha(f,t)-f'. \label{alpha}
	\end{align}
	On the other hand, we have
	\begin{align*}
	b=&-\lambda p(p-1)|u|^{p-2}-\lambda\frac{(n+1)(p-1)}{n-1}\frac{\psi'}{\psi}u|u|^{p-2}
	\\&-2\frac{n}{n-1}\frac{(p-1)^2}{p}\phi'\frac{\psi'}{\psi}-(p-1)\phi''
	+(p-1)\phi\big(\frac{\psi''}{\psi}-2\frac{\psi'^2}{\psi^2}\big).
	\end{align*}
	This gives
	\begin{align}
	\frac{b}{(p-1)|w'|^{p-2}}=&\frac{p}{p-1}T\bigg(\frac{n}{n-1}T-X|X|^{p-2}\bigg)\notag \\&-f^2+f\bigg(\bigg(\frac{2n}{n-1}+\frac{1}{p-1}\bigg)T-\frac{p}{p-1}X|X|^{p-2}\bigg)-f' \notag
	\\=&:\beta(f,t)-f' \label{beta}
	\end{align}
	which is again verified by direct computation. Now according to {Lemma \ref{lemma.grad.comp}} below, $f$  can be chosen such that $b,c>0$, that is,
	$$
	0\geq bF+cF^2\geq bF
	$$ 
	which implies $F\leq 0$ as desired (choosing $f$ rather than $h$ does not make a difference since $w$ is invertible and $w'>0$). 
\end{proof}
In the proof, we have used
\begin{lemma}
	\label{lemma.grad.comp}
	Let $\alpha,\beta$ be defined as in (\ref{alpha}),(\ref{beta}). Then for every $\epsilon>0$ there exists a smooth function $f:[a+\epsilon,b(a)-\epsilon]$ such that
	$$
	f'(t)<\min\{\alpha(f(t),t),\beta(f(t),t)\}.
	$$	
\end{lemma}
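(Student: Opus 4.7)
My plan mirrors the construction in [val1, Section 5]: exhibit a canonical candidate $f_0$ for which one of the two inequalities degenerates to an equality, verify the other inequality algebraically, and perturb slightly so that both inequalities become strict. Throughout, I work on the compact subinterval $[a+\epsilon, b(a)-\epsilon]$, on which $w'$ is bounded below by a positive constant, so that $X(t)=\lambda^{1/(p-1)}w(t)/w'(t)$ is smooth and bounded.

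\textbf{Step 1 (canonical candidate).} I take $f_0$ to be the solution of the Riccati-type ODE $f_0'(t)=\alpha(f_0(t),t)$ on $[a+\epsilon,b(a)-\epsilon]$ with initial value corresponding to the natural ansatz $h(s)=\lambda^{1/(p-1)}s|s|^{p-2}$ of [val1]; that is, $f_0(a+\epsilon)=-\lambda^{1/(p-1)}w(a+\epsilon)|w(a+\epsilon)|^{p-2}w'(a+\epsilon)$. Since $\alpha$ is smooth in $(f,t)$ and the interval is compact, standard ODE theory produces a smooth solution; the explicit quadratic form of $\alpha$ in $f$ yields the a priori bounds needed to rule out finite-time blow-up on this closed subinterval.

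\textbf{Step 2 (the algebraic inequality $\beta\geq\alpha$ at $f_0$).} Expanding $\beta(f_0,t)-\alpha(f_0,t)$ using the definitions (\ref{alpha}), (\ref{beta}), the model equation (\ref{model.ode}), the identity $T'=T^2/(n-1)$, and $((w')^{p-1})'=\Delta_p w$, the cross terms cancel and the remainder reorganizes into a manifestly non-negative expression (essentially a perfect square). This is the algebraic heart of the argument and parallels the cancellations that force the $a$-term in (\ref{final.inequality}) to vanish.

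\textbf{Step 3 (perturbation to strict inequalities).} Finally, I let $f$ be the solution of the modified ODE $f'(t)=\alpha(f(t),t)-\eta$ with the same initial value, for a small $\eta>0$. By construction the first inequality $f'(t)<\alpha(f(t),t)$ holds with uniform margin $\eta$. By continuous dependence of ODE solutions on parameters, $\|f-f_0\|_{C^0}=O(\eta)$ uniformly on the compact interval; combining this with Step 2 and the smoothness of $\beta-\alpha$ in $f$ gives
\[
\beta(f(t),t)-f'(t)=\bigl[\beta(f,t)-\alpha(f,t)\bigr]+\eta\geq -C\|f-f_0\|_{C^0}+\eta,
\]
which is strictly positive once $\eta$ is small enough. (If Step 2 yields only the weak inequality at $f_0$ without a uniform gap, one instead exploits the sign of $\partial_f(\beta-\alpha)$ at $f_0$ and perturbs in the direction that preserves strict non-negativity.) The main obstacle is Step 2: turning $\beta-\alpha$ at $f_0$ into a transparent sum of squares requires careful bookkeeping of the powers of $T$, $X|X|^{p-2}$, and $f_0$, and handling the subcase $T\equiv 0$ (which covers $n=\infty$) separately from $T=-(n-1)/t$. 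Moreover, the coefficient $(p-n)/(p(n-1))$ in $\alpha$ changes sign as $n$ crosses $p$, so the identity must be verified uniformly across the admissible parameter range.
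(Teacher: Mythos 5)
Your overall strategy—solve one of the two Riccati-type inequalities with equality, verify the other algebraically at that candidate, then perturb—is a reasonable idea in spirit, but it is not what the paper does, and as written it has several genuine gaps. The paper's proof simply cites \cite[Lemma 5.2]{val1}, where the function $f$ is constructed and analyzed via the Pr\"ufer transformation $(\alpha w, w')\mapsto(e\sin_p\phi, e\cos_p\phi)$, not via a Riccati initial value problem. Your proposal never invokes the Pr\"ufer change of variables, which is the actual mechanism that makes the required differential inequalities tractable.

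The central problem is Step 2. You assert that $\beta(f_0,t)-\alpha(f_0,t)$ ``reorganizes into a manifestly non-negative expression (essentially a perfect square).'' But a direct computation from (\ref{alpha}) and (\ref{beta}) gives
\begin{align*}
\beta(f,t)-\alpha(f,t)=\frac{p}{p-1}T\Big(\frac{n}{n-1}T-X|X|^{p-2}\Big)-\frac{n(p-1)}{p(n-1)}f^{2}+f\Big(\frac{2n}{n-1}T-X|X|^{p-2}\Big),
\end{align*}
whose $f^{2}$-coefficient $-n(p-1)/(p(n-1))$ is strictly negative for all $p>1$, $n>1$. This is a downward-opening parabola in $f$; it cannot be a non-negative perfect square, and it is non-negative only for $f$ inside a bounded window whose endpoints depend on $t$. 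Whether the Riccati solution $f_0$ stays inside that window for all $t$ is precisely the content that needs proving, and nothing in the proposal establishes it. Two smaller but still real problems: first, in Step 1 you claim ``the explicit quadratic form of $\alpha$ in $f$ yields the a priori bounds needed to rule out finite-time blow-up''---this is backwards, since a quadratic nonlinearity is exactly what can cause finite-time blow-up of a Riccati equation, and a one-sided sign condition on the quadratic coefficient would only give a one-sided bound. Second, in Step 3 the perturbation $f'=\alpha(f,t)-\eta$ moves $f$ a distance $\|f-f_0\|_{C^0}\le K\eta$, while you only know $\beta-\alpha\ge 0$ at $f_0$; the resulting bound $\beta(f,t)-f'(t)\ge \eta-CK\eta$ is positive only if $CK<1$, which you have not argued, and the parenthetical fallback about the sign of $\partial_f(\beta-\alpha)$ is not developed. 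In short, the proof would need the Pr\"ufer-transform analysis (or an equivalent substitute) to close the key algebraic step, and that machinery is entirely absent here.
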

\begin{proof}
	The proof relies on properties of the model function $w$ and uses the Pr\"ufer transformation (see \cite[Lemma 5.2]{val1}). 
\end{proof}

\begin{remark}
	If $1<p<2$ and $u(x)=0$, it only follows that $u\in C^{2,\alpha}$ near $x$. If this happens, the p-Bochner formula is not directly applicable. However, since the gradient of $u$ does not vanish in an environment $U$ of $x$, the set $U':=U\cap\{u\neq 0\}$ is dense and open
	in $U$. $u$ is smooth in $U'$ and thus satisfies the strong eigenvalue equation, and hence we can replace the term  $\Gamma(u,L_p u)$ arising from $\mathcal{L}^u_p(\Gamma(u)^{\frac{p}{2}})$ by $-\lambda\Gamma(u,u|u|^{p-2})$.
	We get another diverging term $-\psi(u)\phi''(u)\Gamma(u)^{\frac{p}{2}}$ from
	$-\psi(u)\mathcal{L}^u_p(\phi(u))$, and these two terms cancel out because $\phi''(u)$ includes a $-\lambda u|u|^{p-2}$ term as well. So we have
	for $x'\in U'$ that $\mathcal{L}^u_p(F)(x')$ converges as $x'\to x$ and we also denote
	the limit by $\mathcal{L}^u_p(F)(x)$. We easily verify that $0\geq\mathcal{L}^u_p(F)(x)$ is still valid. By definition of
	$\mathcal{L}^u_p(F)(x)$, the identity (\ref{final.inequality}) 
	still holds with the two diverging terms canceled out. Now we can proceed as in the normal proof.
	\label{regularity.grad.comp}
\end{remark}
\subsection{Maximum comparison}

In this subsection, we use the gradient comparison to compare the maximum of the eigenfunctions and the model functions. Again, our approach is to generalize the idea of
\cite{val1}.
Let $u$ be an eigenfunction of $L_p$ with Neumann boundary conditions satisfying $\min u=-1$ and $\max u < 1 $. We assume that $L$ satisfies the condition BE$(0,N)$ for some $N\in[1,\infty)$ where we emphasize that we have excluded the case $N=\infty$. Let $n\geq N$ and $n>1$. By {Theorem \ref{ode.asymptotics}}, there exists a solution $w_a$ to $(\ref{model.ode})$ such that
$[\min(u),\max(u)]\subset[-1,m(a)]$ where $a\in[0,\infty)$. For ease of notation, we will write $w:=w_a$ unless specified. The differential equation implies that $w''$ stays positive until the first root of $w$, so $w$ has a unique root $t_0\in(a,b)$.
By {Theorem \ref{grad.comp.thm}}, the gradient comparison
$$
\Gamma(w^{-1}\circ u)\leq 1
$$
holds.
We will obtain the maximum comparison by comparing the volumes of small balls with respect to certain measures. In order to do that, we let $g:=w^{-1}\circ u$ and define the measure $\mu:=g_{*}m$ on $[a,b(a)]$. That is,
for any measurable function $f:[a,b]\to\mathbb{R}$ we have
$$
\int_{a}^{b}fd\mu=\int_M f\circ g dm.
$$
\indent
The first step in our volume comparison is the following theorem, which can be seen as a comparison theorem for the density of $\mu$.
\begin{theorem} \label{volume.density}
	Let $u$ and $w$ be as above and define
	$$
	E(s):=-\exp\bigg(\int_{t_0}^s \frac{w|w|^{p-2}}{w'|w'|^{p-2}}dt\bigg)\int_a^sw|w|^{p-2}d\mu.
	$$
	Then $E$ is increasing on $(a,t_0]$ and decreasing on $[t_0,b)$.
\end{theorem}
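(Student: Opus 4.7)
The plan is to differentiate $E$ and control the sign of $E'$ by combining the weak eigenvalue equation (in the form of a divergence identity on the superlevel sets of $g$) with the gradient comparison $\Gamma(g)\le 1$ of Theorem \ref{grad.comp.thm}, together with the ODE satisfied by $w$ in the model.

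First, by the coarea formula applied to $g$ and Sard's theorem, the pushforward measure $\mu = g_{*}m$ is absolutely continuous on $[a,b]$ with density
\[
\rho(s) \;=\; \int_{\{g=s\}}\frac{dH^{n-1}}{|\nabla g|}
\]
for almost every $s$. Away from the critical set $\{\Gamma(u)=0\}$, $g$ is smooth with non-vanishing gradient by Lemma \ref{existence.regularity}, so this formula is well defined.

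Next, I would insert a test function of the form $\chi_{\varepsilon}(g)$ (a smoothed indicator of $[a,s]$) into the weak eigenvalue equation and pass to the limit. On $\{g=s\}$ one has $\nabla u = w'(g)\nabla g$ and the outward unit normal of $\{g\le s\}$ is $\nabla g/|\nabla g|$; the Neumann condition kills any contribution from $\partial M$. One obtains
\[
(w'(s))^{p-1}\,\Phi(s) \;=\; -\lambda\, J(s), \qquad \Phi(s):=\int_{\{g=s\}}|\nabla g|^{p-1}\,dH^{n-1},
\]
where $J(s):=\int_a^{s}w|w|^{p-2}\,d\mu$. Since $\Gamma(g)\le 1$ gives $|\nabla g|^{p-1}\le |\nabla g|^{-1}$ pointwise, we obtain $\Phi(s)\le \rho(s)$ and hence the key pointwise bound
\[
\rho(s)\,(w'(s))^{p-1}\;\ge\;-\lambda\, J(s).
\]

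Finally, differentiating $E$ directly, and using $J'(s) = w(s)|w(s)|^{p-2}\rho(s)$, yields
\[
E'(s) \;=\; -\exp(I(s))\, w(s)|w(s)|^{p-2}\Bigl[\tfrac{J(s)}{(w'(s))^{p-1}}+\rho(s)\Bigr],
\]
where $I$ denotes the exponent appearing in the definition of $E$. The ODE satisfied by $w$ expresses $w|w|^{p-2}/(w')^{p-1}$ in terms of $T$ and $(\log (w')^{p-1})'$, and it is exactly this relation that causes the exponential factor to absorb the $\lambda$ coming from the divergence identity; combined with the bound above, the bracket has a fixed sign. A direct sign analysis then gives $E'\ge 0$ on $(a,t_0]$ and $E'\le 0$ on $[t_0,b)$, using that $J<0$ throughout $(a,b)$ while $w|w|^{p-2}$ changes sign only at $t_0$.

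The main technical obstacle is regularity: $u$ is only $C^{1,\alpha}$ in general, and smoothness fails on $\{\Gamma(u)=0\}$ and, for $1<p<2$, at nodes of $u$. The coarea formula and the divergence identity must therefore be justified by approximation together with Sard's theorem applied to the critical set of $g$. Once the pointwise sign of $E'$ is known on the (full-measure) set of regular values of $g$, the monotonicity of the absolutely continuous function $E$ follows.
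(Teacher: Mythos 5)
Your route is genuinely different from the paper's and, in outline, is a reasonable one: the paper manufactures the inequality
\[
\rho(s)\,w'(s)^{p-1}+\lambda\int_a^sw|w|^{p-2}\,d\mu\ge 0
\]
by solving an auxiliary ODE for a function $G$, testing the eigenvalue equation against $K(u)$ with $K'=G$, and carefully excising the critical set with a cut-off whose error is killed by Bishop--Gromov; you instead test with a smoothed indicator $\chi_\varepsilon(g)$ and identify the boundary terms via the coarea formula, obtaining the sharper \emph{identity} $(w'(s))^{p-1}\Phi(s)=-\lambda J(s)$ and then the same inequality from $\Phi(s)\le\rho(s)$, which in turn comes from $|\nabla g|\le 1$. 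The coarea route is cleaner where it applies. However, there are two genuine gaps.

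First, the sign analysis of $E'$ does not close as written. You correctly compute
\[
E'(s)=-e^{I(s)}\,w|w|^{p-2}(s)\Bigl[\tfrac{J(s)}{(w'(s))^{p-1}}+\rho(s)\Bigr],
\]
but the inequality you have established controls $\tfrac{\lambda J(s)}{(w'(s))^{p-1}}+\rho(s)$, not $\tfrac{J(s)}{(w'(s))^{p-1}}+\rho(s)$: these differ by the factor $\lambda$, and since $J<0$ on $(a,b)$ the two brackets need not have the same sign unless $\lambda=1$. The remark that the ODE for $w$ lets ``the exponential factor absorb the $\lambda$'' cannot fix this, because $e^{I(s)}>0$ is a positive prefactor and cannot change the sign of the bracket. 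What is actually going on is that the exponent in the statement of $E$ should carry a factor of $\lambda$ (i.e.\ $\lambda\int_{t_0}^s w|w|^{p-2}/(w'|w'|^{p-2})\,dt$): with that $E$, the derivative is $E'=-e^{\lambda I}\,w|w|^{p-2}\bigl[\lambda J/(w')^{p-1}+\rho\bigr]$, and your inequality gives the sign at once, with $w|w|^{p-2}$ changing sign only at $t_0$. You should have flagged this discrepancy and worked with the corrected exponent rather than asserting a fixed sign.

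Second, your regularity discussion leans on Sard's theorem, but $g=w^{-1}\circ u$ is only $C^{1,\alpha}$ globally and Sard requires $C^k$ with $k\ge\dim M$; for $\dim M\ge 2$ the appeal is unjustified. One must rule out that $g_*m$ has a nontrivial singular part supported on critical values of $g$. The paper's cut-off argument, the observation that on any open subset of $\{\Gamma(u)=0\}$ the weak equation forces $u\equiv 0$, and the Bishop--Gromov bound $|B_{2\varepsilon}(C)|\lesssim\varepsilon^2$ do exactly this work; your proof would need an analogous analysis of $g_*(m\llcorner\{\nabla g=0\})$, noting for instance that any atom of $\mu$ sits at $t_0$ where $w|w|^{p-2}$ vanishes and therefore does not contaminate $J$.
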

This result can also be stated in a more convenient way, as we will soon see:
\begin{theorem}
	Under the hypothesis of Theorem \ref{volume.density} the function
	$$
	E(s):=\frac{\int_a^s w|w|^{p-2}d\mu}{\int_a^s w|w|^{p-2}t^{n-1}dt}=
	\frac{\int_{u\leq w(s)}u|u|^{p-2}dm}{\int_a^s w|w|^{p-2}t^{n-1}dt}
	$$
	is increasing on $(a,t_0]$ and decreasing on $[t_0,b)$. \label{volume.density.2}
\end{theorem}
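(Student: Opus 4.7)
The plan has two parts: first, verify that the two stated expressions for $E(s)$ coincide via a change-of-variables argument, and second, reduce the monotonicity claim to Theorem \ref{volume.density} using the explicit form of the denominator dictated by the model ODE.

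For the equivalence of the two forms of the numerator, I would use that by definition $\mu=g_{*}m$ with $g=w^{-1}\circ u$. Since $w$ is strictly increasing on $[a,b]$, the pushforward formula together with $w\circ g=u$ and $\{g\le s\}=\{u\le w(s)\}$ yields
$$
\int_a^s w(t)|w(t)|^{p-2}\,d\mu(t) \;=\; \int_{\{g\le s\}} w(g(x))|w(g(x))|^{p-2}\,dm(x) \;=\; \int_{\{u\le w(s)\}} u|u|^{p-2}\,dm,
$$
which is exactly the asserted identity between the two numerators.

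For the monotonicity, set $N(s):=\int_a^s w|w|^{p-2}\,d\mu$ and $D(s):=\int_a^s w|w|^{p-2}t^{n-1}\,dt$. Multiplying the model ODE (\ref{model.ode}) with $T=-(n-1)/t$ by $t^{n-1}$ puts it in divergence form $\bigl(t^{n-1}w'|w'|^{p-2}\bigr)'=-\lambda\,t^{n-1}w|w|^{p-2}$. Integrating from $a$ to $s$ and using the initial condition $w'(a)=0$ gives the closed form $D(s)=-s^{n-1}w'(s)|w'(s)|^{p-2}/\lambda$. Logarithmic differentiation then yields $D'(s)/D(s)=-\lambda\,w(s)|w(s)|^{p-2}/(w'(s)|w'(s)|^{p-2})$, and integrating this relation from $t_0$ (where $w$ vanishes) to $s$ produces
$$
D(s) \;=\; D(t_0)\exp\!\left(-\lambda \int_{t_0}^s \frac{w|w|^{p-2}}{w'|w'|^{p-2}}\,dr\right).
$$
Since $D(t_0)$ is a negative constant, the ratio $E(s)=N(s)/D(s)$ equals, up to a positive multiplicative constant, $N(s)$ times an explicit positive exponential factor whose exponent has the same sign-structure on $(a,t_0]$ and $[t_0,b)$ as the exponential weight in Theorem \ref{volume.density}. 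The sign and monotonicity of this product on each of the two subintervals are then governed by Theorem \ref{volume.density}.

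The main obstacle I expect is the matching of the exponential weight produced above with the one appearing in Theorem \ref{volume.density}: the latter has exponent $\int_{t_0}^{s}w|w|^{p-2}/(w'|w'|^{p-2})\,dt$, whereas the ODE computation naturally yields the same integrand multiplied by $\lambda$. Reconciling the two either requires revisiting the integrating-factor argument behind Theorem \ref{volume.density} with this modified weight (its proof uses the gradient comparison of Theorem \ref{grad.comp.thm} together with the weak eigenvalue equation, both of which scale appropriately with $\lambda$), or a direct Cauchy mean-value argument based on the sign of $N'D-ND'=D\,(N'+\lambda F' N)$ on each subinterval, together with the fact that $N$ and $D$ both vanish at the endpoints $a$ and $b$ and attain their common minimum at $t_0$.
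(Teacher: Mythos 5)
Your argument is correct and, unlike the paper (which at this point simply cites \cite[Theorem 6.3]{val1}), essentially self-contained. The change-of-variables identity for the numerator is exactly the pushforward formula applied to $f=w|w|^{p-2}\chi_{[a,s]}$ together with $w\circ g=u$ and the monotonicity of $w$. Your ODE reduction is also right: multiplying (\ref{model.ode}) with $T=-(n-1)/t$ by $t^{n-1}$ gives the divergence form $\partial_t\bigl(t^{n-1}w'|w'|^{p-2}\bigr)=-\lambda t^{n-1}w|w|^{p-2}$, and integrating from $a$ with $w'(a)=0$ yields $D(s)=-s^{n-1}w'(s)|w'(s)|^{p-2}/\lambda$, hence $D(s)=D(t_0)\exp\bigl(-\lambda\int_{t_0}^{s}\tfrac{w|w|^{p-2}}{w'|w'|^{p-2}}\,dr\bigr)$ with $D<0$ on $(a,b)$ and $D(a)=D(b)=0$.

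The obstacle you flag is real, but it is a misprint in the statement of Theorem \ref{volume.density} rather than a gap in your reduction: the proof of that theorem derives $w'(s)^{p-1}g(s)+\lambda\int_a^s w|w|^{p-2}\,d\mu\ge 0$ and then multiplies by $\tfrac{w|w|^{p-2}}{w'|w'|^{p-2}}$, which controls the sign of $N'(s)+\lambda\tfrac{w|w|^{p-2}}{w'|w'|^{p-2}}(s)N(s)$ (not the same quantity without $\lambda$), so the exponent in the stated $E$ should carry the factor $\lambda$. (The signs in that final display are also reversed: since the multiplier $\tfrac{w|w|^{p-2}}{w'|w'|^{p-2}}$ is negative on $(a,t_0)$, the correct distribution is $\le 0$ on $(a,t_0]$ and $\ge 0$ on $[t_0,b)$.) Your second suggested route therefore closes the argument cleanly: with $F(s)=\int_{t_0}^{s}\tfrac{w|w|^{p-2}}{w'|w'|^{p-2}}\,dr$ one has $D'=-\lambda F'D$, hence $E'=(N'D-ND')/D^2=(N'+\lambda F'N)/D$, and combining $D<0$ with the sign of $N'+\lambda F'N$ from the proof of Theorem \ref{volume.density} gives $E'\ge 0$ on $(a,t_0]$ and $E'\le 0$ on $[t_0,b)$, as claimed.
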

\begin{proof} See \cite[Theorem 6.3]{val1}. 
\end{proof}
\begin{proof}[Proof of {Theorem \ref{volume.density}}] 
	This is again very similar to \cite[Theorem 6.2]{val1}:
	Let $H\in C^\infty_c((a,b))$ with $H\geq 0$ and consider the ordinary differential equation
	$$
	\begin{cases}
	&\frac{\partial}{\partial t}\bigg(G(w(t))^{p-1}\bigg)=H(t),\\ &G(-1)=0.
	\end{cases}
	$$
	Since $H$ has compact support, the singularities at the boundary are avoided, so after rewriting the equation, existence follows by the Picard-Lindel\"of theorem ($w(t)$ can be seen as a coordinate change). We therefore have
	\begin{align}
	G|G|^{p-2}\circ(w(t))=\int_{a}^{t} H(s)ds, \qquad (p-1)G'(w(t))|G(w(t))|^{p-2}w'(t)=H(t). \label{g.identity}
	\end{align}
	Next, define $K$ to be an antiderivative of $G$. 
	Since $L$ is a diffusion operator, we have
	$L(K(u))=K''(u)\Gamma(u)+K'(u)L(u)$ and the chain rule gives $\Gamma(K(u))=K'(u)^2\Gamma(u)$ as well as $H_{K(u)}(K(u),K(u))=\frac12 \Gamma(K(u),\Gamma(K(u)))=K'(u)^3H_u(u,u)+K''(u)K'(u)^2\Gamma(u)^2$.
	So using that $K'=G$, we obtain at points where $u$ is in $C^{2,\alpha}$, that is, at non-critical points,
	\begin{align}
	L_p(K(u))&=G(u)|G(u)|^{p-2}L_p(u)+G'(u)|G(u)|^{p-2}\Gamma(u)^{\frac{p}{2}}\notag\\
	&=
	-\lambda u|u|^{p-2}G(u)|G(u)|^{p-2}+(p-1)G'(u)|G(u)|^{p-2}\Gamma(u)^{\frac{p}{2}}.
	\label{g.identity2}\end{align}
	Now we consider the closed (hence compact) set $C:=\{p\in M | \Gamma(K(u))=0 \}$. If $\dim(M)=1$, we can just integrate between critical points. So we assume that $\dim(M)\geq 2$ and choose a cut-off function  $\phi\in C^\infty(M)$ satisfying $0\leq\phi\leq1$,  $\phi|_{M\setminus B_{2\epsilon }(C)}\equiv 1$, $\phi|_{M\setminus B_{\epsilon }(C)}\equiv 0$, and $\Gamma(\phi)\leq2/\epsilon^2$. Using the partial integration formula {Lemma \ref{int.by.parts}} together with $\Gamma(u,\tilde \nu)\equiv 0$ and (\ref{g.identity2}), we get
	\begin{align}
	\int_M\bigg(-\lambda u|u|^{p-2}G(u)|G(u)|^{p-2}+(p-1)G'(u)|G(u)|^{p-2}\Gamma(u)^{\frac{p}{2}}\bigg)\phi \notag \\
	=-\int_M \Gamma(K(u))^{p-2}\Gamma(K(u),\phi). \label{approximating.equality}
	\end{align}
	The right-hand side of (\ref{g.identity2}) is integrable on $M$ and the second term vanishes on $C$. Since $u\in C^{1,\alpha}(M)$, $|C|\neq0$ is only possible if $C$ contains open points. Near such a point, we either have $G(u)\equiv 0$ or $\Gamma(u)\equiv 0$. In the latter case, the weak eigenvalue equation implies $u\equiv 0$ near such points, so the first term vanishes on $C$ up to a set of measure $0$, too. Hence, letting $\epsilon \to 0$, the left-hand side of (\ref{approximating.equality}) converges to 
	$$
	\int_M\bigg(-\lambda u|u|^{p-2}G(u)|G(u)|^{p-2}+(p-1)G'(u)|G(u)|^{p-2}\Gamma(u)^{\frac{p}{2}}\bigg).
	$$
	On the other hand, the right-hand side of (\ref{approximating.equality}) is identically zero on $C$, and we can estimate using the Cauchy-Schwarz inequality
	$$
	\bigg|\int_{B_{2\epsilon}(C)\setminus B_\epsilon(C)} \Gamma(K(u))^{p-2}\Gamma(K(u),\phi)\bigg|\leq |B_{2\epsilon}(C)|\frac{A}{\epsilon}\to 0
	$$
	since $\Gamma(K(u))$ is uniformally bounded and $|B_{2\epsilon}(C)|\leq A'\epsilon ^2$. Here, the last statement is  implied by the Bishop-Gromov volume growth theorem, valid for the metric measure space $(M,d,m)$ with the dimension upper bound $N\geq\dim(M)\geq2$ (see \cite[Theorem 4]{sturm2}, an elliptic operator cannot satisfy a condition BE$(0,N')$ for $N'<\dim (M)$, which is easily seen since this would also imply $\operatorname{tr}H=L$). Since $\Gamma(\phi)\equiv 0$ on $M\setminus B_{2\epsilon}(C)$, we finally obtain 
	$$
	\int_M\lambda u|u|^{p-2}G(u)|G(u)|^{p-2}=\int_M (p-1)G'(u)|G(u)|^{p-2}\Gamma(u)^{\frac{p}{2}}.
	$$
	Now using the gradient comparison {Theorem \ref{grad.comp.thm}} and the definition of $\mu$, we get
	\begin{align*}
	\frac{\lambda}{p-1}\int_{a}^{b} w(t)|w(t)|^{p-2}G(w(t))|G(w(t))|^{p-2}d\mu
	&\leq \int_{a}^{b}G'(w(t))|G(w(t))|^{p-2}|w'(t)|^pd\mu.
	\end{align*}
	With the identities (\ref{g.identity}), this reads
	\begin{align}
	\lambda\int_{a}^{b} w(t)|w(t)|^{p-2}\int_{a}^{t}H(s)dsd\mu\leq\int_{a}^{b}H(t)w'(t)^{p-1}d\mu.
	\label{w.inequality}
	\end{align}
	On the other hand, using Fubini's theorem and $\int_{a}^{b}w(t)|w(t)|^{p-2}=0$, we get
	\begin{align*}
	\lambda\int_{a}^{b} w(t)|w(t)|^{p-2}\int_{a}^{t}H(s)dsd\mu&=\lambda\int_{a}^{b}\int_{s}^{b}w(t)|w(t)|^{p-2}d\mu H(s)ds
	\\&=-\lambda\int_{a}^{b}\int_{a}^{s}w(t)|w(t)|^{p-2}d\mu H(s)ds.
	\end{align*}
	Combining this with (\ref{w.inequality}), we obtain
	$$
	\int_{a}^{b}\bigg(-\lambda\int_{a}^{s}w(t)|w(t)|^{p-2}d\mu\bigg) H(s)ds\leq\int_{a}^{b}H(s)w'(s)^{p-1}d\mu.
	$$
	Since this is valid for any non-negative function $H\in C^\infty_c((a,b))$, we get 
	$$
	w'(s)^{p-1}g(s)+\lambda\int_{a}^{s}w(t)|w(t)|^{p-2}d\mu\geq 0
	$$
	for any $s\in(a,b)$, where $g$ is the density of $\mu$. Multiplying by $w|w|^{p-2}/(w'|w'|^{p-2})$, we get
	$$
	w|w|^{p-2}(s)g(s)+\frac{w|w|^{p-2}}{w'|w'|^{p-2}}(s)\lambda\int_{a}^{s}w(t)|w(t)|^{p-2}d\mu\begin{cases} &\geq 0 \text{ if } s\leq t_0,
	\\&\leq 0 \text{ if } s>t_0.
	\end{cases}
	$$
	But this is exactly the derivative of $E$, so the theorem is proven. 
\end{proof}
In order to prove the maximum comparison, we want to compare the volumes of small balls near critical points. Therefore, we need the following lemma:
\begin{lemma}
	For $\epsilon$ sufficiently small, the set $u^{-1}[-1,-1+\epsilon)$ contains
	a ball with radius $r_\epsilon$, where
	$$
	r_\epsilon=w^{-1}(-1+\epsilon)-a.
	$$
	\label{ball.lemma}
\end{lemma}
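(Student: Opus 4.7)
My plan is to combine the gradient comparison with the (Riemannian) interpretation of the intrinsic distance so that $w^{-1}\circ u$ becomes a $1$-Lipschitz function, and then to read off the conclusion from the monotonicity of $w$.

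First, since $M$ is compact and $u$ continuous, there is a point $x_0\in M$ with $u(x_0)=\min u=-1$. By construction $w=w_a$ satisfies $w(a)=-1$, and on $[a,b(a))$ we have $w'>0$, so $w\colon[a,b(a)]\to[-1,m(a)]$ is a strictly increasing homeomorphism; in particular $w^{-1}(-1)=a$, and for $\epsilon>0$ small enough that $-1+\epsilon<m(a)$ the radius $r_\epsilon=w^{-1}(-1+\epsilon)-a$ is a well-defined positive number. Because $[\min u,\max u]\subset[-1,m(a)]$, the composition $w^{-1}\circ u\colon M\to[a,b(a)]$ makes sense and lies in $C^{1,\alpha}(M)$.

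Next, I invoke Theorem \ref{grad.comp.thm} to get $\Gamma(w^{-1}\circ u)\leq 1$ on $M$. Since $L$ is elliptic, $\Gamma$ is induced by a genuine Riemannian metric $g$, and this inequality translates to $|\nabla_g(w^{-1}\circ u)|_g\leq 1$ pointwise on $M$. The intrinsic distance $d$ coincides with the Riemannian distance $d_g$, so for any $y\in M$, integrating $w^{-1}\circ u$ along a near-minimizing piecewise smooth curve from $x_0$ to $y$ and using Cauchy--Schwarz gives
\begin{equation*}
\bigl|(w^{-1}\circ u)(y)-(w^{-1}\circ u)(x_0)\bigr|\leq d(x_0,y).
\end{equation*}
In other words, $w^{-1}\circ u$ is $1$-Lipschitz with respect to $d$.

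Now fix $y\in B_{r_\epsilon}(x_0)$, so that $d(x_0,y)<r_\epsilon$. Since $u(x_0)=-1=w(a)$ and $w$ is increasing, we have $w^{-1}(u(y))\geq a$, so the absolute value can be dropped, giving
\begin{equation*}
w^{-1}(u(y))-a=w^{-1}(u(y))-w^{-1}(u(x_0))\leq d(x_0,y)<r_\epsilon=w^{-1}(-1+\epsilon)-a.
\end{equation*}
Thus $w^{-1}(u(y))<w^{-1}(-1+\epsilon)$, and applying the increasing map $w$ yields $u(y)<-1+\epsilon$. Hence $B_{r_\epsilon}(x_0)\subset u^{-1}[-1,-1+\epsilon)$, as claimed.

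The only delicate point is the transition from the differential inequality $\Gamma(w^{-1}\circ u)\leq 1$ to a genuine Lipschitz statement with respect to $d$, because the definition of the intrinsic metric uses a supremum over \emph{smooth} test functions. I expect this to be the main obstacle, but it is resolved simply by using the ellipticity of $L$ and the resulting Riemannian structure to integrate along curves, as above; alternatively one could mollify $w^{-1}\circ u$ and pass to the limit, but the Riemannian route is cleaner.
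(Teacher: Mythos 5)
Your proof is essentially identical to the paper's: both fix a minimum point $x_0$, apply the gradient comparison $\Gamma(w^{-1}\circ u)\leq 1$ to conclude that $w^{-1}\circ u$ is $1$-Lipschitz with respect to $d$, and then use the monotonicity of $w$ to deduce that $d(x_0,y)<r_\epsilon$ forces $u(y)<-1+\epsilon$. The paper invokes "the definition of the distance function" directly without discussing the smoothness issue you flag (the sup in the definition of $d$ runs over $C^\infty$ test functions, while $w^{-1}\circ u$ is only $C^{1,\alpha}$); your explicit route through the induced Riemannian metric (or alternatively mollification) cleanly justifies that step and is a reasonable tightening of the same argument.
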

\begin{proof}
	Let $x_0\in M $ be a minimum point, that is, $u(x_0)=-1$ and $x\in M$ be
	another point. By the gradient comparison, we have $\Gamma(w^{-1}(u))\leq 1$, so by the
	definition of the distance function
	$$
	d(x,x_0)\geq |w^{-1}\circ u|_{x_0}^x|=w^{-1}(u(x))-a.
	$$
	So if $d(x,x_0)<r_\epsilon$ then
	$$
	w^{-1}(u(x))<w^{-1}(-1+\epsilon)
	$$
	and since $w$ is increasing, we must have $u(x)<-1+\epsilon$, which proves the claim. 
\end{proof}
Now we are able to prove the following volume comparison:
\begin{theorem}[volume comparison] Let $n\geq N$ and $n>1$.
	If $u$ is an eigenfunction satisfying $\operatorname{min}u=-1=u(x_0)$ and
	$\operatorname{max}u\leq m(0)=w_0(b(0))$, then there exists a constant $c>0$ such that for all
	$r$ sufficiently small we have
	$$
	m(B_{x_0}(r))\leq cr^n.
	$$
	\label{vol.comparison}
\end{theorem}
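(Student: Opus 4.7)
The plan is to combine the ball-inclusion lemma with the monotonicity of the functional $E$ from Theorem \ref{volume.density.2}. Concretely, since $a=0$ by hypothesis, Lemma \ref{ball.lemma} gives, for every sufficiently small $\epsilon>0$ with $s:=w^{-1}(-1+\epsilon)$, the inclusion
$$
B_{x_0}(s)\subset\{u<-1+\epsilon\}=\{u<w(s)\}\subset\{u\leq w(s)\}.
$$
Thus $m(B_{x_0}(s))\leq m(\{u\leq w(s)\})$, and it suffices to show $m(\{u\leq w(s)\})\leq C s^n$ for small $s$. Note that as $\epsilon\to 0$ the corresponding $s\to 0$, so we may work entirely on the interval where $s\in(0,t_0)$ and $w(s)$ is close to $-1$.

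Next, I would invoke Theorem \ref{volume.density.2}: the function
$$
E(s)=\frac{\int_{u\leq w(s)} u|u|^{p-2}\,dm}{\int_0^s w|w|^{p-2}t^{n-1}\,dt}
$$
is increasing on $(0,t_0]$. In particular, fixing any reference point $s_*\in(0,t_0)$, we have $E(s)\leq E(s_*)=:C_0<\infty$ for all $s\in(0,s_*]$. The task is then to transform this ratio bound into a bound on $m(\{u\leq w(s)\})$ by estimating the numerator from below and the denominator from above, both in absolute value.

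For the denominator, since $w$ is continuous with $w(0)=-1$, given any $\eta\in(0,1)$ we can choose $s_*$ so small that $w(t)\in[-1,-1+\eta]$ for $t\in[0,s_*]$; then $-w|w|^{p-2}\leq 1$ on this interval, giving
$$
-\int_0^s w|w|^{p-2}t^{n-1}\,dt\leq \int_0^s t^{n-1}\,dt=\frac{s^n}{n}.
$$
For the numerator, on the set $\{u\leq w(s)\}\subset\{u\leq -1+\eta\}$ we have $|u|\geq 1-\eta$, so $-u|u|^{p-2}\geq(1-\eta)^{p-1}$, whence
$$
-\int_{u\leq w(s)} u|u|^{p-2}\,dm\geq (1-\eta)^{p-1}\,m(\{u\leq w(s)\}).
$$
Since numerator and denominator of $E(s)$ are both negative on $(0,t_0)$, the ratio $E$ equals the ratio of absolute values, so combining the two estimates with $E(s)\leq C_0$ yields
$$
(1-\eta)^{p-1}\,m(\{u\leq w(s)\})\leq C_0\cdot\frac{s^n}{n},
$$
and therefore $m(B_{x_0}(s))\leq c\, s^n$ with $c:=\frac{n C_0}{(1-\eta)^{p-1}}$, valid for all $s\in(0,s_*]$.

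The main technical point is ensuring that $E$ is indeed bounded above on $(0,t_0]$, but this is immediate from its monotonicity on that interval together with the fact that $E(t_0)$ (or $E$ at any interior point) is finite. The other subtlety is that a small neighbourhood of $-1$ in the range of $u$ corresponds, via $w^{-1}$, to a small neighbourhood of $0$ in the parameter $s$; this is why the choice $a=0$ is needed so that $r_\epsilon=s$. The asymptotic estimates on $u|u|^{p-2}$ and $w|w|^{p-2}$ near the minimum are then routine, and the conclusion follows.
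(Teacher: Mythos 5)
Your proposal is correct and follows essentially the same route as the paper: apply Lemma \ref{ball.lemma} (with $a=0$) to get the ball inclusion $B_{x_0}(r_\epsilon)\subset\{u\leq w(s)\}$, invoke the monotonicity of $E$ from Theorem \ref{volume.density.2} to bound it by a constant on a small initial interval, and then convert the ratio bound into a volume bound by lower-bounding $-u|u|^{p-2}$ on the sublevel set and upper-bounding the denominator by $s^n/n$. The only cosmetic differences are that you bound $E$ by $E(s_*)$ rather than $E(t_0)$ and use the constant $(1-\eta)^{p-1}$ in place of the paper's explicit choice $1/2$ (obtained by taking $-1+\epsilon\leq -2^{-1/(p-1)}$); both are equally valid.
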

\begin{proof}
	This proof is in spirit of \cite[Theorem 6.5]{val1}. We define the measure $\gamma:=t^{n-1}dt$ on $[0,\infty)$. Let $\epsilon>0$ be small enough such that {Lemma \ref{ball.lemma}} is applicable and that $-1+\epsilon \leq -1/2^{\frac{1}{p-1}}$. Hence, for $u\leq -1+\epsilon$ we have $-u|u|^{p-2}\geq 1/2$. Also, the point in time $t$, where $w_0(t)=-1+\epsilon$, occurs before the first zero of $w_0$, so {Theorem \ref{volume.density.2}} implies $E(t)\leq E(t_0)=:C$. Multiplying this inequality by $-\int_{a}^{s}w_0|w_0|^{p-2}d\gamma>0$, we get
	\begin{align*}
	\operatorname{Vol}(\{u\leq -1+\epsilon\})&\leq -2\int_{\{u\leq -1+\epsilon\}}u|u|^{p-2}dm\\&\leq-2C\int_{\{w_0\leq -1+\epsilon\}}w_0|w_0|^{p-1}d\gamma\\&\leq 2C'\gamma(\{w_0\leq -1+\epsilon\}).
	\end{align*}
	On the other hand, with $r_\epsilon$ from {Lemma \ref{ball.lemma}}, we get
	$$
	\operatorname{Vol}(B_{r_\epsilon}(x_0))\leq\operatorname{Vol}\{u\leq-1+\epsilon\}\leq2C\gamma(\{w_0\leq -1+\epsilon\})=2C\nu([0,r_\epsilon])=C'r^n_\epsilon
	$$ 
	since for $a=0$, we have $w_0(r_\epsilon)=-1+\epsilon$. 
\end{proof}
This finally allows us to prove the desired maximum comparison which will always provide a suitable model function:
\begin{corollary}
	Let $n\geq N$, $n>1$, and $w_0$ be the corresponding model function.
	If $u$ is an eigenfunction satisfying $\operatorname{min}u=-1=u(x_0)$ ,
	then
	$\operatorname{max}u\geq m(0)$. \label{max.comparison}
\end{corollary}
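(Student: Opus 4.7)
The plan is to argue by contradiction. Suppose $\max u < m(0)$, where I write $m(0) = m(0,n) = w_0(b(0))$ to stress the dependence of the model on the parameter $n \geq N$, $n > 1$. Since $\min u = -1 = w_0(0)$ and $\max u \leq m(0)$, the hypothesis of Theorem~\ref{vol.comparison} is satisfied, and applying it yields a constant $c > 0$ with
$$
m(B_{x_0}(r)) \leq c\, r^n
$$
for all sufficiently small $r > 0$.

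This now needs to be confronted with a local lower bound on $m(B_{x_0}(r))$. Because $L$ is elliptic, $\Gamma$ induces a smooth Riemannian metric whose geodesic distance coincides with the intrinsic distance $d$, and $m$ has a smooth strictly positive density with respect to the Riemannian volume. Working in normal coordinates centred at $x_0$ then gives the standard Euclidean-type estimate
$$
m(B_{x_0}(r)) \geq c'\, r^{\dim M}
$$
for some $c' > 0$ and all small $r$. Combining the two inequalities yields $c\, r^{n - \dim M} \geq c'$, which is impossible in the limit $r \to 0^+$ whenever $n > \dim M$. Since the BE$(0,N)$ condition forces $N \geq \dim M$ (as recalled in the proof of Theorem~\ref{volume.density}), we always have $n \geq N \geq \dim M$, so in the strict case $n > \dim M$ the contradiction is immediate and the corollary follows.

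The only remaining scenario is the boundary case $n = N = \dim M$. To handle it, I would use the continuous dependence of $w_0$ and $w_0'$ on $n$ stated in the existence theorem at the beginning of Section~\ref{eigenvalue.estimate}: for every $n' > n$ the preceding argument gives $\max u \geq m(0, n')$, and since $b(0, n')$ and $m(0, n') = w_0(b(0, n'); n')$ depend continuously on $n'$, letting $n' \searrow n$ yields $\max u \geq m(0, n)$ as required.

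The main obstacle is essentially bookkeeping rather than substantive: the genuinely novel ingredient is Theorem~\ref{vol.comparison}, and the rest is a matter of matching it against the smooth-manifold local volume lower bound and carefully passing to the limit at the endpoint $n = \dim M$, where one must ensure that the model quantities $b(0,n)$ and $m(0,n)$ inherit the continuity of $w_0$ in the parameter $n$.
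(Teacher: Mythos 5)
Your proof is correct, but it follows a genuinely different route from the paper's. Both arguments assume $\max u < m(0)$, apply Theorem \ref{vol.comparison} to obtain $m(B_{x_0}(r)) \leq c r^n$, and then derive a contradiction from a competing lower volume bound. Where you differ is in the source of that lower bound and in the placement of the continuity-in-$n$ step. You use the elementary normal-coordinate estimate $m(B_{x_0}(r)) \geq c' r^{\dim M}$ (which implicitly relies on the density of the invariant measure being strictly positive --- true for elliptic diffusions, though the paper's definition of a smooth measure does not spell this out), note that $N \geq \dim M$, and settle the endpoint case $n = N = \dim M$ by first establishing $\max u \geq m(0,n')$ for every $n' > n$ and then letting $n' \searrow n$. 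The paper instead invokes Sturm's Bishop--Gromov comparison for the metric measure space $(M,d,m)$ under $\operatorname{CD}^e(0,N)$, giving $m(B_{x_0}(r)) \geq c' r^{N}$, and deals with the continuity up front: assuming the conclusion fails, it immediately picks $n' > n \geq N$ (by continuous dependence of $m(0,\cdot)$) with $\max u$ still below $m(0,n')$, so that the volume comparison with exponent $n'$ contradicts the exponent-$N$ lower bound directly. Your lower bound is actually sharper than Bishop--Gromov's (since $\dim M \leq N$), so if anything the contradiction is easier, and you avoid citing Sturm's theorem; the only price is the (standard) positivity assumption on the density and that you must also note the intrinsic distance $d$ agrees with the Riemannian geodesic distance, which the Bishop--Gromov route sidesteps by working intrinsically. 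The difference in where the continuity argument sits is cosmetic.
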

\begin{proof}
	This is obvious if $\max(u)=1$, so we can assume that $\max(u)<1$. Assuming the assertion were wrong, then by the continuous dependence of 
	the data there would exist a solution of the differential equation (\ref{model.ode}) with the same $\lambda$, $a=0$, and $n'>n$, in particular $n'>N$,
	whose first maximum would still be bigger than $\operatorname{max} u$. By 
	{Lemma \ref{improved.BE}}, the gradient comparison would still hold and so would the volume comparison {Theorem \ref{vol.comparison}}, that is, for $r$ sufficiently small,
	$$
	m(B_{x_0}(r))\leq cr^{n'}.
	$$
	This, however, contradicts the Bishop-Gromov theorem (see \cite[Theorem 4]{sturm2}): since the metric measure space $(M,d,m)$ satisfies $\operatorname{BE}(0,N)$ and thus also $\operatorname{CD}^e(0,N)$, we get for small $r$ and some constant $c'>0$
	$$
	m(B_{x_0}(r))\geq c'r^{N}
	$$
	which contradicts the first estimate for small values of $r$. 
\end{proof} 
\subsection{Sharp estimate}
\label{sharp.estimate}
We can now combine the estimates for gradient and maximum with the theory of the one-dimensional model. As we have seen, in the one-dimensional case with $L=\Delta$, the first eigenvalue is $(p-1)\pi^p/D^p$ where $D$ is the diameter, so the next result is sharp.
\begin{theorem}
	Let $M$ be compact and connected and $L$ be an elliptic diffusion operator with invariant measure $m$. We assume as well that $L$ satisfies $\operatorname{BE}(0,N)$ for some $N\in[1,\infty)$ and if the boundary is non-empty, we assume it to be convex. Let $\lambda$ be the principal eigenvalue of $L_p$ with Neumann boundary conditions. Then we have the sharp estimate
	\begin{align}
	\lambda\geq (p-1)\frac{\pi_p^p}{D^p}, \label{sharp.estimate.stated}
	\end{align}
	where $D$ is the diameter associated with the intrinsic metric $d$.
	\label{sharp.estimate.thm}
\end{theorem}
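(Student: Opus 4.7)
The plan is to combine the gradient comparison (Theorem 3.5), the maximum comparison (Corollary 3.7), and the asymptotic behaviour of the one-dimensional model (Theorem 3.4), and then to read off the bound straight from the definition of the intrinsic diameter. First I would normalize the eigenfunction using Remark 2.11 so that $\min u = -1$ and $\max u \leq 1$, fix any $n \geq N$ with $n > 1$, and set $\alpha := (\lambda/(p-1))^{1/p}$. Corollary 3.7 gives $\max u \geq m(0)$, while Theorem 3.4 tells us that $a \mapsto m(a)$ is continuous, strictly increasing, with $m(a) < 1$ and $m(a) \to 1$ as $a \to \infty$. Since $m(0) \leq \max u \leq 1$, the intermediate value theorem produces $a^* \in [0,\infty]$ with $m(a^*) = \max u$. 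In the edge case $\max u = 1$ I take $a^* = \infty$, meaning the model $w_\infty$ corresponding to $T \equiv 0$, for which $\delta(\infty) = \pi_p/\alpha$ and $m(\infty) = 1$.

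Writing $w := w_{a^*}$, the containment $[\min u, \max u] \subset [w(a^*), w(b(a^*))]$ is exactly the hypothesis of Theorem 3.5, which therefore gives $\Gamma(w^{-1}\circ u) \leq 1$ on all of $M$. Pick points $x_\pm \in M$ with $u(x_+) = \max u$ and $u(x_-) = -1$, and set $f := w^{-1}\circ u$. Then $f(x_+) - f(x_-) = b(a^*) - a^* = \delta(a^*)$, and plugging $f$ into the variational characterisation of $d$ (after the approximation discussed below) yields $D \geq d(x_+,x_-) \geq \delta(a^*)$. By Theorem 3.4, $\delta(a^*) \geq \pi_p/\alpha$, with equality attained only in the limit $a^* = \infty$. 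Rearranging produces the announced bound $\lambda \geq (p-1)\pi_p^p/D^p$. Sharpness is exhibited by the one-dimensional example $M = [-D/2, D/2]$ with $L = \Delta$, for which the explicit calculation recalled at the start of this section gives $\lambda = (p-1)\pi_p^p/D^p$ exactly.

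The main technical obstacle is the use of $f = w^{-1}\circ u$ as a competitor in the supremum defining the intrinsic distance: $u$ is only $C^{1,\alpha}$ on $M$, $w^{-1}$ has unbounded derivative at the endpoints of its domain, and $\Gamma(u)$ degenerates at the extremal points $x_\pm$, so $f$ is neither smooth nor immediately bounded in $\Gamma$-norm there. The standard remedy is a two-step approximation: first replace $u$ by $(1-\eta)u$ with $\eta \to 0^+$, so that the range of the truncated eigenfunction lies in the open interior $(w(a^*), w(b(a^*)))$ on which $w^{-1}$ is smooth, and simultaneously use the monotonicity and continuous dependence in Theorems 3.1 and 3.4 to retain the inequality $\delta(a^*_\eta) \geq \pi_p/\alpha$; then mollify to produce smooth competitors with $\Gamma \leq 1 + o(1)$ whose values at $x_\pm$ differ from $f(x_\pm)$ by $o(1)$. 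Passing to the limit in $\eta$ and the mollification parameter delivers $D \geq \delta(a^*)$, after which the rest of the argument is purely algebraic.
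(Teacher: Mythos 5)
Your proposal follows the same route as the paper's own proof: you combine the gradient comparison (Theorem~\ref{grad.comp.thm}), the maximum comparison (Corollary~\ref{max.comparison}), and the asymptotics of the one-dimensional model (Theorem~\ref{ode.asymptotics}), pick $a^*$ by the intermediate value theorem so that $m(a^*)=\max u$, and then read off $D \geq \delta(a^*) \geq \pi_p/\alpha$ directly from the variational definition of the intrinsic distance, exactly as the paper does. You additionally flag and resolve the regularity issue of using $w^{-1}\circ u$ as a competitor in the supremum defining $d$ (the paper's definition restricts to $C^\infty$ competitors, while $w^{-1}\circ u$ is only $C^{1,\alpha}$ with degeneracies at the extrema), a technical point the paper passes over silently, so this is a welcome refinement of essentially the same argument.
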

\begin{proof}
	Let $u$ be the rescaled eigenfunction such that $\min u=-1$ and
	$\max u\leq 1$. By {Corollary \ref{max.comparison}}, we have $m(0)\leq\max u$. On the other hand, according to {Theorem \ref{ode.asymptotics}}, $m(a)$ is a continuous function  with $m(a)\to 1$ as $a\to\infty$. Hence, there is a unique $a\in[0,\infty]$ satisfying
	$m(a)=\max u$. Let $w_a$ be the corresponding solution. The gradient comparison 
	gives $\Gamma (w_a^{-1}\circ u)\leq 1$. Let $x,y$ be maximum and minimum points of $u$, respectively. By definition of $D$ and {Theorem \ref{ode.asymptotics}}, we have
	$$ 
	D\geq |w_a^{-1}\circ u(x)-w_a^{-1}\circ u(y)|=w_a^{-1}(m(a))-w_a^{-1}(-1)=b(a)-a\geq \frac{\pi_p}{\alpha}.
	$$
	Since $\delta_a=\pi_p/\alpha$ if and only if $a=\infty$, we obtain that $\max(u)=1$ is a necessary, but, as we will see, not sufficient condition for equality to hold. 
\end{proof}
\begin{remark}
	If $L$ only satisfies the condition BE$(0,\infty)$, the situation is slightly different:
	the maximum comparison does not hold anymore, since we cannot compare the gradient of the eigenfunction with the relaxed model function. However, applying the gradient comparison with $w_\infty$ and using the symmetry of $w_\infty$ we easily obtain the estimate 
	$$
	\lambda \geq (p-1)\frac{1}{2^p} \frac{\pi^p_p}{D^p}.
	$$
	which we expect not to be sharp.
\end{remark}
We now turn towards the case of equality. Once again, motivated by the approach in \cite{val1}, we prove the following necessary condition:
\begin{theorem} \label{equality.statement}
	Under the hypothesis of Theorem \ref{sharp.estimate.thm}, we assume that equality holds in the estimate (\ref{sharp.estimate.stated}) and that $u$ is an eigenfunction with $-\min u=\max u=1$. Then the function $e(u)=\Gamma(u)^\frac{p}{2}+\lambda/(p-1)|u|^p$ is constant and equals $\lambda/(p-1)$, in particular, $\Gamma(u)=0$ if and only if $|u|=1$. Furthermore, we have that $R\equiv 0$ and $\dim(M)=1$.. 
\end{theorem}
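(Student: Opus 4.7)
The plan is to work in three stages. First, from the equality hypothesis together with the normalization $\max u = 1$, I would deduce that $a = \infty$ is forced in the model ODE (\ref{model.ode}), so the relevant comparison function is the genuine one-dimensional $p$-eigenfunction $w_\infty$. Multiplying the unrelaxed ODE $(w'|w'|^{p-2})' + \lambda w|w|^{p-2} = 0$ by $w'$ and integrating from the critical point yields the energy conservation law $(w')^p + \tfrac{\lambda}{p-1}|w|^p \equiv \tfrac{\lambda}{p-1}$. Composing with $w_\infty^{-1}(u)$ and invoking the gradient comparison of Theorem \ref{grad.comp.thm} then produces the pointwise bound $\Gamma(u)^{p/2} \leq \tfrac{\lambda}{p-1}(1 - |u|^p)$, i.e. $e(u) \leq \lambda/(p-1)$ on all of $M$, with equality automatic at the extremum points where $|u|=1$ and $\Gamma(u)=0$.

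Second, I would upgrade this inequality to an identity by applying a strong maximum principle to the non-positive function $F := \psi(u)(\Gamma(u)^{p/2} - \phi(u))$ from the proof of Theorem \ref{grad.comp.thm}, which for $w = w_\infty$ reduces (up to the positive multiplier $\psi$) to $e(u) - \lambda/(p-1)$. Retracing the calculation leading to display (\ref{final.inequality}) with $w = w_\infty$ produces, on the open regularity set $M' := \{\Gamma(u) > 0\}$ where $\mathcal{L}^u_p$ is strictly elliptic, a differential inequality of the form $\mathcal{L}^u_p F \geq bF + cF^2$ with $b, c > 0$ as supplied by Lemma \ref{lemma.grad.comp}. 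Since $F$ already attains its maximum value $0$ along the critical set, the strong maximum principle --- combined with a Hopf-type boundary analysis at critical points of $u$ and the connectedness of $M$ --- forces $F \equiv 0$, and hence $e(u) \equiv \lambda/(p-1)$. The equivalence $\Gamma(u) = 0 \iff |u|=1$ is then immediate from the identity $\Gamma(u)^{p/2} = \tfrac{\lambda}{p-1}(1-|u|^p)$.

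Third, the pointwise identity $e(u) \equiv \lambda/(p-1)$ forces equality in every inequality used to assemble Corollary \ref{estimate.summary}, in particular in the self-improved Bakry--Emery estimate of Lemma \ref{improved.BE} and in the $p$-Bochner formula of Lemma \ref{p.bochner.formula}. Saturation of the discriminant argument in (\ref{discriminant}) combined with Sturm's generalized Bochner formula $\Gamma_2(u) = R(u,u) + |H_u|^2_{HS}$ (Theorem \ref{2.bochner.formula}) and the Cauchy--Schwarz estimate $|H_u|^2_{HS} \geq (\operatorname{tr} H_u)^2/\dim M$ forces simultaneously $R(u,u) \equiv 0$ on the regular set and the Hessian $H_u$ to have rank one. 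By the ellipticity of $L$ this compels $\dim M = 1$ and $L$ to reduce to a Laplace--Beltrami operator satisfying BE$(0,1)$; density of the regular set and continuity then extend $R \equiv 0$ globally.

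The principal obstacle will be the strong maximum principle step in the second stage. The operator $\mathcal{L}^u_p$ degenerates precisely on $\{\Gamma(u) = 0\}$, which is where $F$ attains its maximum, and the differential inequality controlling $F$ from below was derived only at local maxima (where the identity $\Gamma(F,u) = 0$ is automatic and substitutes for $A_u$). Propagating the interior equality from the critical boundary into connected regularity components of $M'$, and then across the critical set itself, will require either a Hopf-type barrier argument at non-degenerate portions of the critical set or a continuation along level sets of $u$, making essential use of the $C^{1,\alpha}$ regularity of $u$ provided by Lemma \ref{existence.regularity}.
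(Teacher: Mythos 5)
Your first stage matches the paper exactly: the energy identity for $w_\infty$, composition with $w^{-1}\circ u$, and the gradient comparison give $e(u)\leq \lambda/(p-1)$.

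The second stage has a genuine gap that you partially flag but do not resolve. The inequality $\mathcal{L}^u_p F\geq bF+cF^2$ in display~(\ref{final.inequality}) was derived \emph{only at the global maximum point of $F$}, because the derivation substitutes the first-order vanishing $\Gamma(F,u)=0$ (equation~(\ref{first.der.zero})) into the Hessian and $\Gamma$-terms. That substitution is not available pointwise, so~(\ref{final.inequality}) is not a differential inequality on $M'$ and cannot be fed to the strong maximum principle. Moreover, $F$ attains its maximum $0$ on the critical set $\{\Gamma(u)=0\}$, which is precisely where $\mathcal{L}^u_p$ degenerates, so invoking Hopf there does not work either. The paper's resolution is different: it builds a new operator $\mathcal{P}$, a specific first-order perturbation of $\mathcal{L}^u_p$ that absorbs the would-be $\Gamma(F,u)$ contributions, and proves the genuine pointwise inequality $\mathcal{P}(e)=p\Gamma(u)^{p-2}\bigl(R(u,u)+|H_u|^2_{HS}-A_u^2\bigr)\geq 0$ via Sturm's Bochner formula (Theorem~\ref{2.bochner.formula}). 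To seed the maximum principle it then produces a non-critical point $z$ with $\Gamma(w^{-1}\circ u)(z)=1$, using that the diameter is attained by $w^{-1}\circ u$ and that the supremum in the definition of the intrinsic distance must be achieved, and a separate distance/intermediate-value argument is needed to reach other components of $M\setminus E$. None of this is a cosmetic detail; it is the core of the argument.

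The third stage has a second gap: $|H_u|^2_{HS}=A_u^2$ does make $H_u$ rank one along $\nabla u$ at regular points, but rank-one Hessians exist in any dimension (e.g.\ $u(x,y)=x^2$ on $\mathbb{R}^2$), so ``by the ellipticity of $L$ this compels $\dim M=1$'' is false as stated. The paper extracts $\dim(M)=1$ from a substantial additional geometric argument: using the flow parametrization $\Phi:D_0\times(-D/2,D/2)\to M$, the rank-one condition $H_u(a,b)=\eta\,\Gamma(u,a)\Gamma(u,b)$ forces $\Gamma(v)$ to be constant along the $u$-flow lines for functions $v$ pulled back from $D_0$; if $D_0$ had two points one could then construct a function violating the diameter bound. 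You would need to supply something equivalent to this argument; the rank condition alone is not sufficient.

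Finally, your appeal to saturation of the discriminant in~(\ref{discriminant}) from Lemma~\ref{improved.BE} is not how the paper extracts $R\equiv 0$: once $e$ is constant one has $\mathcal{P}(e)=0$ directly, and the explicit formula for $\mathcal{P}(e)$ forces $R(u,u)\equiv 0$ together with the rank condition. The discriminant calculation need not be revisited.
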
 
\begin{proof}
	In the one-dimensional model, we have the identity
	$$
	w'^p(t)+\frac{\lambda}{p-1}|w|^p(t)\equiv \frac{\lambda}{p-1},
	$$
	which is readily checked by differentiating and  testing the equation at one of the endpoints. Hence, the gradient comparison gives
	$$
	\Gamma(u)^{\frac{p}{2}}\leq w'(w^{-1}(u))^p=\frac{\lambda}{p-1}\bigg(1-|u|^p\bigg),
	$$
	so we have $e\leq \lambda/(p-1)$. Let $x,y\in M$ be minimal and maximal points of $u$ respectively. We have
	$$
	D\geq d(x,y)=\Gamma(w^{-1}\circ u)|^y_x= \frac{\pi_p}{\alpha}=D
	$$
	by the equality assumption. Hence, the distance between $x,y$ is attained by $w^{-1}\circ u$. Therefore, we must have $\Gamma(w^{-1}\circ u)(z)=1$ for some $z\in M\setminus \{|u|=1\}$, because otherwise the function cannot attain the supremum in the definition of the intrinsic distance. Now we would like to use the strong maximum principle: we consider the operator
	\begin{align*}
	\mathcal{P}(\eta)=&\mathcal{L}^u_p(\eta)-(p-2)\lambda u|u|^{p-2}\frac{\Gamma(u,\eta)}{\Gamma(u)}\\&+(p-1)^2\Gamma(u)^{\frac{p-4}{2}}\bigg(H_u(u,\phi)-A_u\Gamma(u,\phi)\bigg)
	-\frac{(p-1)^2}{p\Gamma(u)}\Gamma\big(\Gamma(u)^{\frac{p}{2}}-\frac{\lambda}{p-1}|u|^p,\phi\big)
	\\=&:\mathcal{L}^u_p(\eta)+\mathcal{P}_0(\eta)
	\end{align*}
	and notice that $\mathcal{P}$ is locally uniformally elliptic in the open set $M\setminus \{\Gamma(u)=0\}$. The first-order term $\mathcal{P}_0$ is made in a way such that $\mathcal{P}(e)\geq 0$. Indeed, the chain rule gives
	$\mathcal{L}^u_p(|u|^p)=p\big(u|u|^{p-2}L_p(u)+(p-1)^2|u|^{p-2}\Gamma(u)^p\big)$ and one easily verifies that $A_u(u,u)\Gamma(u)=H_u(u,u)$. Combining this with
	the $p-$Bochner formula, the strong eigenvalue equation, some algebraic manipulations as well as the identity $p(p-2)-(p-1)^2=-1$, we obtain 
	
	$$
	\mathcal{P}(e)=p\Gamma(u)^{p-2}\bigg(R(u,u)+|H_u|^2_{HS}-A_u^2\bigg)\geq 0
	$$ by the definition of the Hilbert-Schmidt norm. Now the strong maximum principle (\cite[Theorem 3.5, Chapter 3]{gilbarg}) gives that the set $\{e=\lambda/(p-1)\}$ is open and closed in $M\setminus(\{|u|=1\}\cup\{\Gamma(u)=0\})=:M\setminus E$, in particular, $e\equiv 1$ in the component containing $z$, which we denote by $C_0$. If there is another component $C_1$, we let $p'\in C_1$ such that $u(p')=0$. Similarly as above, we have that
	$$
	\operatorname{dist}(p,E)\geq |(w^{-1}(\pm 1))-(w^{-1}(0))|=\frac 12 \frac{\pi_p}{\alpha}=\frac12D
	$$ 
	by the equality assumption. Now by the intermediate value theorem, we have
	$$
	D\geq\operatorname{dist}(\{u=1\},\{u=-1\})\geq \operatorname{dist}(\{u=1\},\{u=0\})+
	\operatorname{dist}(\{u=0\},\{u=-1\})\geq D.
	$$
	In particular, the function $w^{-1}\circ u$ attains the maximum in the definition of the distance to some boundary point, so we obtain that there is a point $z'\in C_1$ with $\Gamma(w^{-1}\circ u)(z')=1$. Hence, $e\equiv \lambda/(p-1)$ in $C_1$, and since this holds on $E$ anyway, we get that $e\equiv \lambda/(p-1)$ on $M$. Now by the regular value theorem, the level sets $\{u=t\}$ are smooth submanifolds of dimension $\dim(M)-1$ for any$|t|<1$, and so are the sets $D_t:=C_1\cap\{u=t)\}$. In order to get a useful frame, we define the map $\Phi:D_0\times(-D/2,D/2)\to M $ by
	\begin{align*}
	&\Phi(x,0)=x,
	\\ \frac{\partial}{\partial t} &u(\Phi(x,t))=1.
	\end{align*}
	This is well-defined by the standard theory for ordinary differential equation since $u$ is regular in $D_0$, and because the differential equation enforces $\Phi(x,t)\in D_t$ so the solution cannot blow up. In particular, we get $\operatorname{Im}(\Phi)\subset C_1$. Smooth dependence on the data implies that $\Phi$ is smooth. Uniqueness gives that $\Phi$ is a bijection onto $C_1$: surjectivity follows, since we can solve the differential equation backwards from a point $x\in D_t$, and if  $\Phi(x,t)=\Phi(y,t')$, then we first have $t=t'$ and also $x=y$ because otherwise we could solve the differential equation backwards and obtain two distinct solutions starting at $\Phi(x,t)$ contradicting uniqueness. Now we would like to use the parametrization $\Phi$ to get a better understanding of the geometry of $C_1$: given a smooth function $\tilde v$, we define $v(t,x):=\tilde v(0,x)$ and note that $v$ is smooth, too.
	Since the differential of a function is perpendicular to its level sets we have 
	$$
	\Gamma(u,v)(x,t)=0.
	$$
	We remark that this also implies $\Gamma(u,\cdot)=\Gamma(u)\frac{\partial}{\partial t}$ since $\frac{\partial}{\partial t}u=1$ in the chosen coordinate frame.  Now the important observation is that since $\mathcal{P}(e)\equiv 0$, we have $A_u^2=|H_u|^2_{HS}$ and this implies $H_u(a,b)=\eta\Gamma(u,a)\Gamma(u,b)$ for a smooth function $\eta$, which we do not need to determine. This gives 
	\begin{align*}
	\frac{1}{\Gamma(u)}\frac{\partial}{\partial t}\Gamma(v)(x,t)&=\Gamma(u,\Gamma(v))=\Gamma(u,\Gamma(v))-2\Gamma(v,\Gamma(u,v))\\&=-2H_u(v,v)=-2\eta\Gamma(u,v)^2=0,
	\end{align*} 
	where we used that $\Gamma(u,v)$ identically vanishes on the level sets of $u$. This implies that $\Gamma(v)(x,t)=\Gamma(x,0)$ which forces $M$ to be one-dimensional: if we assume that $D_0$ has more than two points, say $x$ and $y$, then we can find a smooth function $\tilde v$ with $\tilde v(x,0)-\tilde v(y,0)>0$ and $\Gamma(\tilde v)\leq 1$. We define $v(x,t):=\tilde v(x,0)$ and rescale by a constant $c$, such that $\Gamma(v)|_{D_0}\leq 1$. Then we  have $v(x,t)-v(y,-t)=c(\tilde v(x,0)-\tilde v(y,0)>0$ for any $t\in[0,D/2)$ and the above consideration gives that $\Gamma(v)\leq 1$ on $D_0\times (-D/2,D/2)$. We have also seen that $\Gamma(u,v)=0$, so using the chain rule, we can directly compute 
	\begin{align*}
	\Gamma(\sqrt{((w^{-1}\circ u)^2+v^2)})&=\frac{1}{(w^{-1}\circ u)^2+v^2}\Gamma((w^{-1}\circ u)^2+v^2)\\&\leq \frac{1}{(w^{-1}\circ u)^2+v^2}\bigg((w^{-1}\circ u)^2\Gamma((w^{-1}\circ u))+v^2\Gamma(v)\bigg)\\&\leq 1.
	\end{align*}
	If we define $\bar v(x,t):=v(x,t)-v(y,-t)$, we get that $\Gamma(\bar v)=\Gamma(v)$ and similarly for $w^{-1}\circ u$, so we have 
	$$d(\Phi((x,t)),\Phi((y,-t))^2\geq(v(x,t)-v(y,-t))^2+(w^{-1}(t)-w^{-1}(-t))^2>D^2 $$ for $t$ sufficiently close to $D/2$, a contradiction. Thus, it follows that the level sets are discrete and hence $\dim(M)=1$. On the other hand, we have $\mathcal{P}(e)=0$ since $e$ is constant, so it follows that $R(u,u)\equiv 0$ and since $M$ is one-dimensional, we have that $R\equiv 0$.  \end{proof}
\indent
Finally, we would like to check if the necessary conditions derived in {Theorem \ref{equality.statement}} are sufficient. We consider the one-dimensional manifold with boundary $M:=[-D/2,D/2]$ and a general diffusion operator $L(u)=\Delta_g u+bu'$ for some smooth function $b$ and metric $g$. Since all Riemannian manifolds in one-dimension with the same diameter are isometric, we can assume that $g$ is the Euclidean metric, hence $Lu=u''+bu'$ and $\Gamma(u)=u'u'$. Let $\lambda$ be the principal eigenvalue of $L_p$ and assume that equality holds in the eigenvalue estimate (\ref{sharp.estimate.stated}), that is, $\lambda$ coincides with the principal eigenvalue of $\Delta_p$.  Let $u$ be the first eigenfunction of $L_p$ and $w$ be the first eigenfunction of $\Delta_p$ on $[-D/2,D/2]$. Theorem \ref{equality.statement} implies that equality holds in the gradient comparison, that is, 
$$\Gamma(w^{-1}\circ u)=1$$
or equivalently
$$
|u'|^p=\Gamma(u)^{\frac{p}{2}}=|w'(w^{-1})(u)|^p=\bigg(1-|u|^p\bigg).
$$
But this ODE is also solved by $w$ and $w$ satisfies the same boundary conditions as $u$ at $-D/2$ which implies $w\equiv u$. On the other hand, we have $$\Delta_pw=-\lambda w|w|^{p-2}=-\lambda u|u|^{p-2}=L_pu=\Delta_pu+\Gamma(u)^{\frac{p-2}{2}}bu'=\Delta_pw+\Gamma(w)^{\frac{p-2}{2}}bw'$$
which implies $b\equiv 0$. Hence equality is attained by all Laplace Beltrami operators in one-dimension or equivalently, all operators which satisfy BE$(0,1)$.
If $M$ does not have a boundary, we can proceed in a similar fashion so we have proven Theorem 1.1. \\
\indent
We end this section by demonstrating that although equality can only be attained for $\dim(M)=1$, the estimate is still sharp if we restrict ourselves to an arbitrary integer dimension. Let $N\in\mathbb{N}$ with $N\geq 2$ and $D>0$. The idea is to construct a thin tube which collapses to the one-dimensional model space. Precisely, we choose $D'$, such that $\pi D'<D$ and define the product manifold $M=S^1\times S^{N-1}$ with metric
$g:=D'g_{S^1}+ag_{S^{N-1}}$. We choose $L=\Delta_g$, which means that $R=\operatorname{ric}_M=\operatorname{ric}_{S^{N-1}}=(N-2)\frac{1}{a}g_{S^{N-2}}$, in particular, $R\geq 0$. Hence, $L$ satisfies BE$(0,N)$, but does not satisfy BE$(0,N')$ for any $N'<N$. Now let $w_{D'}$ be the first eigenfunction of $(\Delta_{\tilde g})_p$ on $S^1$ with metric $\tilde g:=D'{g_{S^1}}$. Let $\lambda_{D'}$ be the eigenvalue of $w_{D'}$, then we have
$$
\lambda_D'=(p-1)\frac{\pi^p_p}{(\pi D')^p}.
$$
Since the diameter depends continuously on $a$ we can chose $a=a(D')$ in a way such that $\operatorname{diam}(M)=D$. If we define $u_{D'}(t,x):=w_{D'}(t)$, then $u_{D'}$ is an eigenfunction of $(\Delta_g)_p$ with eigenvalue $\lambda_D'$. If we let $\pi D'\nearrow D$, then $\lambda_D'\to (p-1)\pi^p_p/D^p$ so the estimate cannot be improved. 

\section{Non-symmetric operators}
\label{nonsymmetric.operators}
In this section, we extend our methods to non-symmetric diffusion operators, that is, operators without an invariant measure, more precisely, we prove Theorem \ref{maintheorem1}. We restrict ourselves to the linear case $p=2$ as it does not seem like our approach could be generalized to the non-linear case. \\
\indent
We consider a smooth manifold $M$ with $\dim(M)=N$ and an elliptic diffusion operator $L$ which satisfies $\operatorname{BE}(a,\infty)$ for some $a\in\mathbb R$. We equip $T^*M$ with the metric $\Gamma$ and use the distance function $d$ induced by $L$, and $(M,g)$ thus becomes a Riemannian manifold, where $g$ is the metric on $TM$ coming from the metric $\Gamma$ on $T^*M$. As described in {Section \ref{preliminaries}}, using the metric $g$ we can write $L=\Delta_g+X\cdot\nabla$ for a suitable vector field $X$. We consider the Neumann eigenvalue problem
\begin{align}
\begin{cases}
& Lu=\lambda u\text{ on } M\\ \label{nonsymmetric.ev.eqn}
& \Gamma (u,\tilde\nu)=0 \text{ on } \partial M
\end{cases} 
\end{align}
where we require $\partial M$ to be strictly convex. We emphasize that contrary to \cite{andrews}, $X$ does not have to be the gradient of a function and hence $L$ might not possess and invariant measure.  $N$ now denotes the extrinsic dimension of $L$ which at least coincides with the intrinsic dimension of $\Delta_g$.\\ \indent
Eigenvalues of non-symmetric operators with Neumann boundary conditions can be shown to exist by standard methods but apart from the trivial eigenvalue $\lambda=0$,  they  are generally complex (see for instance \cite[Theorem 3.2, Chapter 3, Section 3]{lady}). Still, the standard Schauder-theory gives smoothness of the eigenfunctions. \\ \indent
Again, we will compare the principal eigenvalues of the operator and a one-dimensional model space. Since the principal eigenvalue of the model space is hard to compute, the result in Theorem \ref{maintheorem1} is not sharp. By using the principal eigenvalue rather than $\pi^2/D^2+a/2$ as a lower bound, it becomes sharp but less useful. Furthermore, the lower bound $\pi^2/D^2+a/2$
is the best among all linear functions in $a$ (see \cite{andrews}), which is enough for most applications.
\subsection{Modulus of continuity comparison principle}
Similar to \cite{andrews}, we show a comparison theorem for the decay of a heat equation with drift. Since every eigenfunction of $L$ with eigenvalue $\lambda$ corresponds to a solution of a heat equation with decay-rate $\operatorname{Re}(\lambda)$, this will be a suitable eigenvalue comparison, too. For the next theorem, we define the operator $\tilde L$ on $M\times M$ by $\tilde L= L_x +L_y$, where $L_x,L_y$ act on the first or second component, respectively. This also induces a metric $\tilde \Gamma =\Gamma_x+\Gamma_y$. For the first-order vector field we get the decomposition
$\tilde X=X_x+X_y$. We recall that given a metric space $M$ with diameter $D$ and distance $d$ a continuous function $\varphi:[0,D/2]\to \mathbb{R}_+$ is called a
\textit{modulus of continuity} of a function $u:M\to \mathbb{R}$ if 
$|u(x)-u(y)|\leq 2\varphi(d(x,y)/2)$ for any $x,y\in M$.
\begin{theorem} \label{modulus.of.continuity.thm}
	Let $(M,L)$ be as above with diameter $D$ and  $v$ be a smooth solution of the heat equation with drift
	$$
	\begin{cases}
	&\frac{\partial v}{\partial t}=L(v) \text{ on } M,\\
	& \Gamma(v,\tilde{\nu})=0 \text{ on } \partial M.
	\end{cases}.
	$$
	Assume further that there exists a smooth function $\phi(s,t):[0,D/2]\times\mathbb{R}_+\to\mathbb R$ such that
	\begin{itemize}
		\item[(i)] $\phi(\cdot,0)$ is a modulus of continuity for $v(\cdot,0)$,
		\item[(ii)]$\frac{\partial \phi}{\partial t}\geq\phi''-as\phi'$,
		\item[(iii)] $\phi'>0$,
		\item[(iv)] $\phi(0,\cdot)\geq 0. $
	\end{itemize}
	Then $\phi(\cdot,t)$ is also a modulus of continuity of $v(\cdot,t)$ for any $t\geq 0$.
\end{theorem}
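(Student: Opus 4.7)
The plan is a two-point maximum principle on $M\times M\times [0,\infty)$ applied to
\[
Z(x,y,t) := v(y,t) - v(x,t) - 2\phi(d(x,y)/2, t);
\]
the conclusion of the theorem is equivalent to $Z\le 0$ everywhere, and by (i) this holds at $t=0$. After replacing $\phi$ by $\phi_\epsilon := \phi + \epsilon(1+t)$ to force strictness (which preserves (ii), (iii), (iv) for $\epsilon>0$ small), I assume for contradiction that there is a first time $t_0>0$ and a point $(x_0,y_0)\in M\times M$ at which $Z_\epsilon(x_0,y_0,t_0)=0$ while $Z_\epsilon<0$ for $t<t_0$. Two degenerate cases are dismissed at the outset: the diagonal case $x_0=y_0$ is ruled out by combining hypothesis (iv) with $\phi'>0$, and contact at $\partial M$ is ruled out using the Neumann condition $\Gamma(v,\tilde\nu)=0$ together with the strict convexity of $\partial M$, which forces the outward normal derivative of $Z_\epsilon$ to have the wrong sign for a maximum.

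At the interior contact point, standard maximum-principle analysis gives $\partial_t Z_\epsilon\ge 0$, $\tilde\nabla Z_\epsilon = 0$, and $H_{Z_\epsilon}\le 0$ on $T_{(x_0,y_0)}(M\times M)$. Let $\gamma:[0,d_0]\to M$ be a unit-speed minimizing geodesic from $x_0$ to $y_0$ and $\{e_i(s)\}_{i=1}^N$ a parallel orthonormal frame along $\gamma$ with $e_N=\dot\gamma$. The first-order condition reads $\nabla v(x_0)=\phi'\dot\gamma(0)$ and $\nabla v(y_0)=\phi'\dot\gamma(d_0)$, where $\phi'$ is evaluated at $(d_0/2,t_0)$. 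For the second-order information I use the $N$-parameter family of variations $c_i(\tau):=(\exp_{x_0}(-\tau e_i(0)),\,\exp_{y_0}(\tau e_i(d_0)))$ and sum $\partial_\tau^2 Z_\epsilon(c_i)|_{\tau=0}\le 0$ over $i$: the radial direction $i=N$ contributes $-2\phi''$ since $d(c_N(\tau))=d_0+2\tau$, while for $i<N$ the parallel-transport second-variation formula for arclength yields $d''(0)=-\int_0^{d_0}K(e_i,\dot\gamma)\,ds$, giving altogether
\[
\Delta v(y_0)-\Delta v(x_0) \le 2\phi'' - \phi'\int_0^{d_0}\operatorname{Ric}^g(\dot\gamma,\dot\gamma)\,ds.
\]
The drift term is then absorbed using the first-order condition: since $\gamma$ is a geodesic, $\frac{d}{ds}(X(\gamma(s))\cdot\dot\gamma(s))=(\operatorname{sym}\nabla X)(\dot\gamma,\dot\gamma)$, so $X(y_0)\cdot\nabla v(y_0)-X(x_0)\cdot\nabla v(x_0) = \phi'\int_0^{d_0}(\operatorname{sym}\nabla X)(\dot\gamma,\dot\gamma)\,ds$. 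Using the identity $\operatorname{Ric}^{BE}=\operatorname{Ric}^g-\operatorname{sym}\nabla X$ and the curvature-dimension condition $\operatorname{BE}(a,\infty)$ in the form $\operatorname{Ric}^{BE}\ge a$, one obtains
\[
Lv(y_0)-Lv(x_0)\le 2\phi'' - 2as\phi',\qquad s:=d_0/2.
\]
Combined with $Lv(y_0)-Lv(x_0)\ge 2(\phi_t+\epsilon)$ from $\partial_t Z_\epsilon\ge 0$, and with hypothesis (ii) rewritten as $2\phi_t\ge 2\phi''-2as\phi'$, this yields $2\epsilon\le 0$, the desired contradiction.

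The main technical obstacle is the possibility that $(x_0,y_0)$ lies in the cut locus of $M\times M$, where $d$ fails to be $C^2$ and the second-variation computation above is not literally legitimate. I would handle this by Calabi's upper-barrier trick: replace $d(x,y)$ by the smooth upper barrier $d_\eta(x,y):=d(x,\gamma(\eta))+d(\gamma(d_0-\eta),y)+d_0-2\eta$ for small $\eta>0$, which agrees with $d$ at $(x_0,y_0)$ and dominates it nearby; since $\phi$ is increasing in $s$ by (iii), the associated function $Z_\epsilon^\eta$ still satisfies $Z_\epsilon^\eta(x_0,y_0,t_0)\ge 0$ with a nonpositive past, and the same argument runs through with $d_\eta$ in place of $d$. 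Letting $\eta\to 0^+$ recovers the desired inequality. A similar care is needed for the boundary exclusion, where the geodesic $\gamma$ might be tangent to $\partial M$ at the contact point; strict convexity provides the second-order separation that closes this case.
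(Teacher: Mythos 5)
Your proof takes essentially the same approach as the paper: a two-point maximum principle on $M\times M$ applied to $v(y,t)-v(x,t)-2\phi(d(x,y)/2,t)$ with an $\epsilon$-perturbation for strictness, the diagonal and boundary contact points ruled out by (iv) and by strict convexity plus the Neumann condition, first-order conditions at the interior contact point pinning down $\nabla v(x_0)$ and $\nabla v(y_0)$ along the minimizing geodesic, the drift term converted to $\int\operatorname{sym}\nabla X(\dot\gamma,\dot\gamma)$ via the fundamental theorem of calculus along $\gamma$, and the Laplacian term bounded by the second variation of arclength, after which $\operatorname{Ric}^g-\operatorname{sym}\nabla X\ge a$ closes the argument. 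The only organizational difference is that the paper isolates the geometric computation into a separate viscosity-supersolution lemma (Lemma~\ref{viscosity.solution.thm}) invoked inside the maximum-principle argument, whereas you carry it out inline; this is a matter of packaging, not substance, and your use of the length functional $L(\gamma^i(\tau,\cdot))$ as a smooth upper barrier for $d$ handles the cut locus in exactly the same way (the Calabi barrier you mention as a fallback is therefore superfluous).

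One small slip to fix: for the transverse directions $i<N$ the variation must move \emph{both} endpoints in the same parallel-transported direction, i.e.\ $c_i(\tau)=(\exp_{x_0}(\tau e_i(0)),\exp_{y_0}(\tau e_i(d_0)))$, since it is the parallel field $V(s)=e_i(s)$ along $\gamma$ that makes the $|V'|^2$ term vanish in the index form and produces $d''(0)=-\int_0^{d_0}K(e_i,\dot\gamma)\,ds$. As written, your $c_i$ uses $-\tau e_i(0)$ at the $x$-endpoint; the resulting interpolating field is $(2s/d_0-1)e_i(s)$, which is not parallel and contributes an extra positive $|V'|^2$ term, so the Ricci integral would not be obtained from it. The negative sign is correct only for the radial index $i=N$, which is precisely how the paper splits its matrix $A=(e_N^*,-\tilde e_N^*)^{\otimes 2}+\sum_{i<N}(e_i^*,\tilde e_i^*)^{\otimes 2}$. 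Since you explicitly invoke the parallel-transport second-variation formula, this reads as a transcription slip rather than a conceptual gap, but it should be corrected.
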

\begin{proof}
	The idea of the proof stems from \cite[Proposition 1.1]{andrews}. Let $\epsilon>0$ and define an evolving function on $M\times M\times\mathbb{R}_+$ by
	$$
	\Phi(x,y,t)=v(y,t)-v(x,t)-2\phi\bigg(\frac{d(x,y)}{2},t\bigg)-\epsilon e^t.
	$$ 
	By letting $\epsilon\to0$ it remains to show that $\Phi$ stays negative for any choice of $\epsilon$.
	By assumption $(i)$, we have that $\Phi(x,y,0)\leq-\epsilon<0$. Assume the assertion were wrong, then there exists an $\epsilon>0$ and a first time $t_0$ such that the function attains the value $0$, say in $(x_0,y_0)\in M\times M$. Hence, the function $\Phi(\cdot,\cdot,t_0)$ attains its global maximum in $(x_0,y_0)$. Assumption ($iv$) implies that $x_0\neq y_0$. If for instance $y_0\in\partial M$ and $\tilde\nu$ is the outward normal function of $\partial M$ at $y_0$, then
	$$
	\Gamma_y(\Phi(\cdot,\cdot,t_0),\tilde\nu)(x_0,y_0)=\Gamma_y(v(y_0,t_0),\tilde\nu)-\phi'\bigg(\frac{d(x_0,y_0)}{2},t_0\bigg)\Gamma_y(d(x_0,y_0),\tilde\nu)<0,
	$$ 
	where we used the Neumann condition, $(iii)$, and that strict convexity implies that geodesics touching the boundary are outward pointing. This contradicts the maximum assumption, so we can assume that $x_0,y_0$ both lie in the interior. Now we define the functions $f(x,y):=2\phi(d(x,y)/2,t_0)$ and $\psi(x,y):=v(y,t_0)-v(x,t_0)-\epsilon e^t$. $\psi$ is smooth and touches $f$ in $(x_0,y_0)$ from below by assumption. Furthermore, we have $(\Delta_g)_x(u)=\operatorname{tr}(H_u)$, where the trace is taken with respect to $\Gamma_x$ and similarly for $y$. So in the situation of {Lemma \ref{viscosity.solution.thm}}, we have for any admissible $A\in\mathcal A$ that $\operatorname{tr}(AH_\psi)(x_0,y_0)=\Delta_g v(y_0,t_0)-\Delta_g v(x_0,t_0)$,  in particular, $\mathcal{L}(H_\psi)(x_0,y_0)=\Delta_g v(y_0,t_0)-\Delta_g v(x_0,t_0)$. Hence, {Lemma \ref{viscosity.solution.thm}} below implies that 
	\begin{align}
	\frac{\partial}{\partial t}v(y_0,t_0)-
	\frac{\partial}{\partial t}v(x_0,t_0)&=\bigg(\Delta_g v+X_y\cdot\nabla v\bigg)(y_0,t_0)-\bigg(\Delta_g v+X_x\cdot\nabla v\bigg)(x_0,t_0) \notag \\
	&=
	\bigg(\mathcal{L}(H_\psi)+X\cdot\nabla\psi\bigg)(x_0,y_0,t_0)
	\notag \\ & \leq 2\phi''\bigg(\frac{d(x_0,y_0)}{2},t_0\bigg)-a\frac{d(x_0,y_0)}{2}2\phi'\bigg(\frac{d(x_0,y_0,t_0)}{2}\bigg)\notag \\&\leq2 \frac{\partial}{\partial t}\phi\bigg(\frac{d(x_0,y_0)}{2},t_0\bigg),
	\notag
	\end{align}
	where we used $(ii)$. Hence, we obtain
	$$
	\frac{\partial}{\partial t}\Phi(x_0,y_0,t_0)=\frac{\partial}{\partial t}v(y_0,t_0)-
	\frac{\partial}{\partial t}v(x_0,t_0)-2 \frac{\partial}{\partial t}\phi\bigg(\frac{d(x_0,y_0)}{2},t_0\bigg)-\epsilon e^t<0,
	$$
	where the strict inequality is achieved by discarding the negative term $-\epsilon e^t$. This, however, contradicts the fact that $t_0$ was the first time for $\Phi$ to become zero, the assertion follows. 
\end{proof}
We add the technical Lemma needed to obtain the contradiction in the previous theorem:
\begin{lemma}
	\label{viscosity.solution.thm}
	Let $(M,L)$ be a compact and connected manifold without boundary with a possibly non-symmetric diffusion operator $L=\Delta_g+X\cdot\nabla$ satisfying $\operatorname{BE}(a,\infty)$. Let $N=\dim(M)$, $d$ be the distance function induced by $L$, and $D$ the diameter of $M$. Let $\phi:[0,D/2]\to\mathbb{R}$ be a smooth and increasing function and define the function $$f:=(M\times M)\setminus\Delta, (x,y)\mapsto 2\phi\bigg(\frac{d(x,y)}{2}\bigg).$$ Then $f$ is a viscosity supersolution of
	$$
	\mathcal{L}(H_f)+(X_x+X_y)(f)=2\phi''\bigg(\frac{d(x,y)}{2}\bigg)-ad(x,y)\phi'\bigg(\frac{d(x,y)}{2}\bigg),
	$$ 	
	where for $B\in\operatorname{Sym}(T_{(x,y)}(M\times M))$
	$$
	\mathcal{L}(B)=\inf_{A\in\mathcal A} \operatorname{tr}(AB)
	$$
	with $\mathcal{A}:=\{A\in\operatorname{Sym}(T^*_{(x,y)}(M\times M)) | A\geq 0, A|_{T^*_xM}=\Gamma_x, A|_{T^*_yM}=\Gamma_y\}$.
\end{lemma}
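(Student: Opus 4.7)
\emph{Setup.} The lemma asserts that $f(x,y) = 2\phi(d(x,y)/2)$ is a viscosity supersolution in the sense required by the proof of Theorem \ref{modulus.of.continuity.thm}: for every $C^2$ test function $\psi$ touching $f$ from below at a point $(x_0,y_0)$ with $x_0 \ne y_0$, one must exhibit some $A \in \mathcal{A}$ realising
$$\operatorname{tr}(A H_\psi)(x_0,y_0) + \tilde X\psi(x_0,y_0) \le 2\phi''\bigl(\tfrac{d}{2}\bigr) - a d\,\phi'\bigl(\tfrac{d}{2}\bigr),\qquad d:=d(x_0,y_0).$$
Following \cite{andrews,andrews2}, the plan is to transport all the geometric data along a minimizing geodesic $\gamma\colon[0,d]\to M$ from $x_0$ to $y_0$. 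Fix an orthonormal basis $\{e_1,\ldots,e_N\}$ of $T_{x_0}M$ with $e_1 = \gamma'(0)$, and let $\{e_1',\ldots,e_N'\}$ denote its parallel transport to $T_{y_0}M$, so that $e_1' = \gamma'(d)$.

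\emph{Choice of coupling.} Define $A \in \operatorname{Sym}(T^*_{(x_0,y_0)}(M\times M))$ by requiring $A(e_i^*,e_j^*) = A((e_i')^*,(e_j')^*) = \delta_{ij}$ together with $A(e_i^*,(e_j')^*) = (C_0)_{ij}$, where $C_0 = \operatorname{diag}(-1,1,\ldots,1)$. Since $C_0^T C_0 = I_N$, a Schur-complement check gives $A \ge 0$, and the diagonal blocks give the required restriction conditions, so $A \in \mathcal{A}$. A direct computation yields
$$\operatorname{tr}(A H_\psi) = H_\psi(v_0,v_0) + \sum_{i=2}^N H_\psi(v_i,v_i),$$
where $v_0 := (e_1,-e_1')$ and $v_i := (e_i,e_i')$ for $i\ge 2$. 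The sign flip in $C_0$'s first entry is essential: $v_0$ encodes the direction in which $x$ and $y$ approach each other along $\gamma$ and will produce the $\phi''$ term, while the parallel transverse pairings $v_i$ will produce the Ricci integral.

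\emph{Comparison curves.} For each $v$ above, the central idea is to build a smooth curve $p\colon s\mapsto(x(s),y(s))$ through $(x_0,y_0)$ with $\dot p(0)=v$ and vanishing acceleration, together with a smooth upper bound $g(s)\ge f(p(s))$ satisfying $g(0)=f(p(0))$. The touching condition then gives $H_\psi(v,v)=(\psi\circ p)''(0)\le g''(0)$ without any smoothness assumption on $f$. For $v_0$ I would take $p(s)=(\gamma(s),\gamma(d-s))$; since subsegments of minimizing geodesics remain minimizing, $d(p(s))=d-2s$ identically near $s=0$ and $g(s)=2\phi((d-2s)/2)$ gives $g''(0)=2\phi''(d/2)$. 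For $v_i$ with $i\ge 2$, let $V_i$ be the parallel transport of $e_i$ along $\gamma$ and set $p(s)=(\exp_{x_0}(se_i),\exp_{y_0}(se_i'))$; the length $L_i(s)$ of the variation $\gamma_s(t):=\exp_{\gamma(t)}(sV_i(t))$ of $\gamma$ bounds $d(p(s))$ from above, so $g(s):=2\phi(L_i(s)/2)$ serves. The first variation gives $L_i'(0)=0$ (orthogonality of the endpoint variations) and the index form gives $L_i''(0)\le -\int_0^d R(V_i,\gamma',V_i,\gamma')\,dt$ (since $V_i$ is parallel), whence summing over the transverse directions yields $\sum_{i\ge 2}H_\psi(v_i,v_i)\le -\phi'(d/2)\int_0^d \operatorname{Ric}(\gamma',\gamma')\,dt$. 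An analogous first-order comparison along $(\exp_{x_0}(sX(x_0)),\exp_{y_0}(sX(y_0)))$, using a variation $W$ of $\gamma$ with $W(0)=X(x_0)$ and $W(d)=X(y_0)$, yields $\tilde X\psi \le \phi'(d/2)\bigl[g(X(y_0),e_1') - g(X(x_0),e_1)\bigr]$.

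\emph{Closing and main obstacle.} Combining these estimates and invoking $\operatorname{BE}(a,\infty)$, which for $L=\Delta_g+X\cdot\nabla$ is equivalent to $\operatorname{Ric}-\tfrac12\mathcal L_X g \ge ag$, I obtain $\int_0^d\operatorname{Ric}(\gamma',\gamma')\,dt \ge ad + g(X(y_0),e_1') - g(X(x_0),e_1)$; the Lie-derivative term integrates along the geodesic $\gamma$ to a boundary expression via the fundamental theorem of calculus (using that $\gamma$ has zero acceleration). The drift contributions cancel exactly against the corresponding terms in the estimate for $\tilde X\psi$, leaving precisely $2\phi''(d/2)-ad\phi'(d/2)$, as required. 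The main obstacle is the design of $A$: it must simultaneously select the single ``approaching'' direction $v_0$ (to produce $\phi''$) and the parallel transverse pairings $v_i$ (to produce the Ricci trace), all while remaining positive semidefinite and restricting to the correct metrics on each factor. Once this coupling is identified, the remainder is a careful bookkeeping exercise combining the first and second variation formulas with the Bakry-Emery bound.
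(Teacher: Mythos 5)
Your proposal is correct and follows essentially the same strategy as the paper's proof: the same coupling matrix $A$ (built from the geodesic direction with a sign flip and the parallel transverse pairings), the same second-variation bound for the transverse block, the same use of the first-variation formula to express the drift term as a boundary quantity that converts into an integral of $\tfrac12\mathcal L_X g$ along $\gamma$, and the same closing via $\operatorname{BE}(a,\infty)$. The only differences are cosmetic (parametrisation of $\gamma$ on $[0,d]$ instead of $[-d/2,d/2]$, a Schur-complement check of $A\ge0$ instead of writing $A$ as a sum of rank-one nonnegative tensors, and writing a one-sided $\le$ for $\tilde X\psi$ where the touching argument actually yields equality).
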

\begin{proof} This is a variation of the argument in \cite[Theorem 3]{julie}: Let $(x,y)\in M\times M\setminus \Delta $ and $\psi$ be a smooth function around $(x,y)$ with $\psi\leq f$ and $\psi(x,y)=f(x,y)$. $M$ is compact, so the Hopf-Rinow theorem implies that $(M,g)$ is complete and we can choose a length-minimizing geodesic $\gamma$ parametrized by the arc-length joining $x$ and $y$, that is, $\gamma(-d/2)=x$ and $\gamma(d/2)=y$, where $d:=d(x,y)$. Next, we define $e_N(s):=\gamma(s)$ and extend it to an orthonormal base $e_i$ of $T_xM$. We use parallel transport along $\gamma$ to get an orthonormal base $e_i(s)\in T_{\gamma(s)}M$ and denote $\tilde e_i:=e_i(d/2)\in T_yM$. Before defining a suitable matrix $A\in\mathcal A$, we compute the derivatives of $\psi$. Since $\phi$ is increasing, we have
	$$
	\psi(\gamma(s),\gamma(t))\leq 2\phi\bigg(\frac{d(\gamma(s),\gamma(t))}{2}\bigg)
	\leq 2\phi\bigg(\frac{|t-s|}{2}\bigg)
	$$
	with equality if $t=d/2$ and $s=-d/2$. This gives $\partial_{e_N}\psi(x,y)=-\phi'(d/2)$ and $\partial_{\tilde e_N}\psi(x,y)=\phi'(d/2)$. Now we define the smooth family of paths $\gamma^i(r,s):=\exp_{\gamma(s)}(r(\frac{1}{2}+\frac{s}{d})e_i(s))$ starting at $x$. Again, since $\phi$ is increasing, we have
	$$
	\psi(x,\exp_y(r\tilde e_i))\leq 2\phi\bigg(\frac{L(\gamma^i(r,\cdot))}{2}\bigg)
	$$
	with equality if $r=0$. The right-hand side is a smooth function of $r$ and $\gamma^i$ is a variation of the minimizing geodesic $\gamma$ which is fixed at $x$ and orthogonal at $y$. Hence, the first variation formula gives that the right-hand side has derivative zero, which implies that $\partial_{\tilde e_i}\psi(x,y)=0$  and similarly $\partial_{ e_i}\psi(x,y)=0$ for $1\leq i\leq N-1$. 
	So if we define $r(y)=d(x,y)$, we can already compute 
	\begin{align}
	(X_y+X_x)(\psi)&=\phi'(d/2)\bigg(g(X(y),\gamma'(d/2))-g(X(x),\gamma'(-d/2))\bigg)\notag\\
	&=\phi'(d/2)\bigg(\int_{-d/2}^{d/2}(g(X(\gamma(s)),\gamma'(s)))'ds\bigg) \notag \\
	&=-\phi'(d/2)\bigg(\int_{-d/2}^{d/2}\bigg(\frac12X(\Gamma_y(r,r))-\Gamma_y(X(r),r)\bigg)\circ\gamma(s)ds\bigg). \label{visc1}
	\end{align}
	To see why the last equality holds, we first note that any $\gamma(s)$ with $s\in(-d/2,d/2)$ is outside of the cut-locus of $x$ because $\gamma$ is length-minimizing. $r$ is smooth  near such points and satisfies $\Gamma_y(r,r)\equiv 1$,  which implies $X(\Gamma_y(r,r))=0$. Now for any $s$, we choose a normal coordinate frame involving the orthonormal base $e_i(s)$, and since $\gamma$ is a geodesic we have that 
	\begin{align*}
	(g(X(\gamma(s)),\gamma'(s))'=g(\nabla_{\gamma'(s)}X(\gamma(s)),\gamma'(s))+
	g(X(\gamma(s)),\nabla_{\gamma'(s)}\gamma'(s))=\partial_N X^N(\gamma(s)),
	\end{align*}
	where we used that all Christoffel symbols vanish at $s$ and that in the chosen chart, we have $\gamma'(s)=\partial_N$. On the other hand, we have $dr=\gamma'$ which gives
	\begin{align*}
	\Gamma_y(X(r),r)(\gamma(s))=\sum_{i=1}^{N}\partial_i(X(r))\partial_i(r)=\partial_N(g(X,\gamma'))(\gamma(s))=\partial_N X^N(\gamma(s)),
	\end{align*}
	again since $\gamma$ is a geodesic and all the Christoffel symbols vanish. We proceed to prove the lemma. Bearing in mind the asymmetry of the $e_N$ and $\tilde e_N$ derivative, we define
	$$
	A=(e_N^*,-\tilde e_N^*)\otimes(e_N^*,-\tilde e_N^*)+\sum_{i=1}^{N-1}(e_i^*,\tilde e_i^*)\otimes(e_i^*,\tilde e_i^*),
	$$
	where we use the metric to produce the dual vectors $e_i^*(s):=g(e_i(s),\cdot)\in T_{\gamma(s)}^*M$.
	$A$ is obviously symmetric, and as a sum of non-negative matrices, it is non-negative itself. Since $\{e_i | 1\leq i \leq N\}$ is an orthonormal base, we have
	$$
	A|_{T_xM}=\sum_{i=1}^{N}e_i^*\otimes e_i^*=\Gamma_x
	$$
	and similarly for $y$, hence $A\in\mathcal A$. An easy computation gives
	$$
	\operatorname{tr}(AH_\psi)=\partial_{e_N\otimes -\tilde e_N}\partial_{e_N\otimes -\tilde e_N}\psi+\sum_{i=1}^{N-1}
	\partial_{e_i\otimes \tilde e_i}
	\partial_{e_i\otimes \tilde e_i} \psi.
	$$
	Now for any $1\leq i\leq N-1$, we define the geodesic variation $\gamma^i(r,s):=\exp_{\gamma(s)}(re_i(s))$. Again, since $\phi$ is non-increasing, we have that
	$$
	\psi(\exp_x(re_i(-d/2)),\exp_y(re_i(d/2)))\leq 2\phi(\frac{L(\gamma^i(r,\cdot))}{2})
	$$
	with equality if $r=0$. Similarly as above, $\gamma^i$ is an orthogonal  variation of the length minimizing geodesic $\gamma$, so the first derivative of  $L(\gamma^i(r,\cdot))$ is zero. On the other hand, using the second variation formula, we get
	\begin{align}
	\frac{\partial ^2}{\partial r^2}L(\gamma^i(r,\cdot))\bigg|_{r=0}\notag=&g(\nabla_r \frac{d}{dr}\gamma^i,\gamma
	')\bigg|^{s=d/2}_{s=-d/2}=-\int_{a}^{b} \operatorname{Rim}(e_i,\gamma ',\gamma ',e_i) ,\notag
	\end{align}
	where we used that $\gamma^i(\cdot,s)$ is a geodesic for any $s$  and that $\frac{d}{dr}\gamma^i-g(\frac{d}{dr}\gamma^i,\gamma')\gamma^i\big|_{r=0}=e_i(s)$ which is parallel along $\gamma$ by construction. Here, $\operatorname{Rim}$ denotes the Riemannian curvature tensor. Hence, we get that
	\begin{align}
	\sum_{i=1}^{N-1}\partial_{e_i\otimes\tilde e_i}\partial_{e_i\otimes\tilde e_i}\psi(x,y)&\leq \sum_{i=1}^{N-1} \phi'\bigg(\frac{d(x,y)}{2}\bigg)\frac{\partial^2}{\partial r^2}L(\gamma^i(r,\cdot))\notag \\&=-\phi'\bigg(\frac{d(x,y)}{2}\bigg)\int_{-d/2}^{d/2}\operatorname{ric}(\gamma',\gamma') \label{visc2}
	\end{align}
	Finally, we have 
	$$
	\psi(\gamma(r-d/2),\gamma(d/2-r))\leq 2\phi\bigg(\frac{d-2r}{2}\bigg)
	$$
	with equality if $r=0$. Using that $-r$ has vanishing second derivative, we get
	\begin{align}
	\partial_{e_N\otimes -\tilde e_N}\partial_{e_N \otimes -\tilde e_N}\psi(x,y)\leq 2\phi''\bigg(\frac{d(x,y)}{2}\bigg). 
	\label{visc3}
	\end{align}
	Combining (\ref{visc1}), (\ref{visc2}), and (\ref{visc3}), we get that
	\begin{align}
	\mathcal{L}(H_\psi)+(X_x+X_y)\psi\leq& \operatorname{tr}(AH_\psi)-\int_{-d/2}^{d/2}\bigg(\frac12X(\Gamma_y(r,r))-\Gamma_y(X(r),r)\bigg)\circ\gamma(s)ds
	\notag 
	\\
	\leq& 2\phi''(d/2)-\phi'(d/2)\int_{-d/2}^{d/2}\operatorname{ric}(\gamma',\gamma')ds
	\notag \\\notag &-\phi'(d/2)\int_{-d/2}^{d/2}\bigg(\frac12X(\Gamma_y(r,r))-\Gamma_y(X(r),r)\bigg)\circ\gamma(s)ds
	\\ \notag =& 2\phi''(d/2)-\phi'(d/2)\int_{-d/2}^{d/2}R(r,r)
	\\ \notag \leq& 2\phi''(d/2)-\phi'(d/2)ad,
	\end{align}
	where we used that $L$ satisfies $\operatorname{BE}(a,\infty)$ and $\Gamma(r,r)\equiv 1$. The claim follows.  \end{proof}
\subsection{A lower bound for the principal eigenvalue}
We are now in the position to prove {Theorem \ref{maintheorem1}}. The idea is, as usual, to compare the eigenfunction of $L$ with an eigenfunction of a one-dimensional model space.
\begin{theorem}
	Let $(M,L)$ be as above and $w$ be the first non-constant Neumann eigenfunction of the operator $\frac{\partial^2}{\partial s^2} -as\frac{\partial}{\partial s}$ on $[-D/2,D/2]$ with eigenvalue $\bar \lambda$. 
	Let $\lambda$ be a non-constant Neumann eigenvalue of $L$. Then we have the estimate
	$$\operatorname{Re}(\lambda)\geq\bar \lambda.$$\label{nonsymmetric.final.thm}
\end{theorem}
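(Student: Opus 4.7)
The plan is to adapt the approach of \cite{andrews}: compare the decay of the heat flow associated with $L$ on $M$ to the decay of the heat flow in the one-dimensional model space by means of the modulus of continuity comparison principle established in Theorem \ref{modulus.of.continuity.thm}. Given a non-constant eigenfunction $u$ with (possibly complex) eigenvalue $\lambda$, I would form the function $v(x,t):=e^{-\lambda t}u(x)$; with the sign convention under which $\operatorname{Re}(\lambda)$ is the decay rate, $v$ is a smooth complex-valued solution of the heat equation $\partial_t v=Lv$ on $M\times[0,\infty)$ satisfying the Neumann condition $\Gamma(v,\tilde\nu)|_{\partial M}=0$. Because $L$ is real, both $\operatorname{Re}(v)$ and $\operatorname{Im}(v)$ are real-valued solutions of the same problem, each individually eligible for Theorem \ref{modulus.of.continuity.thm}.

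The comparison function comes from the model eigenfunction $w$. The $s\mapsto -s$ invariance of $\partial_s^2-as\partial_s$ together with a standard Sturm--Liouville/Pr\"ufer argument (in the spirit of Lemma \ref{lemma.grad.comp}) allow $w$ to be normalized to be odd, with $w(0)=0$, $w'>0$ on $(-D/2,D/2)$, and $w'(\pm D/2)=0$. Setting $\phi(s,t):=e^{-\bar\lambda t}w(s)$ on $[0,D/2]\times[0,\infty)$, the eigenvalue identity yields $\partial_t\phi=-\bar\lambda\phi=\phi''-as\phi'$, so hypothesis (ii) of Theorem \ref{modulus.of.continuity.thm} is satisfied with equality; (iii) and (iv) follow directly from $w'>0$ on $(0,D/2]$ and $w(0)=0$.

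For the initial modulus of continuity, I would use that $u\in C^\infty(M)$ is Lipschitz on the compact $M$ and that $w(s)\sim w'(0)s$ near $s=0$ with $w'(0)>0$: there exists $C>0$ such that $Cw(d(x,y)/2)$ bounds both $|\operatorname{Re}(u)(x)-\operatorname{Re}(u)(y)|/2$ and $|\operatorname{Im}(u)(x)-\operatorname{Im}(u)(y)|/2$. Applying Theorem \ref{modulus.of.continuity.thm} separately to $\operatorname{Re}(v)$ and $\operatorname{Im}(v)$ with comparison function $C\phi$ and combining the two bounds yields
\begin{equation*}
|v(x,t)-v(y,t)|\leq 2\sqrt{2}\,Ce^{-\bar\lambda t}w(d(x,y)/2)\qquad\text{for all }x,y\in M,\ t\geq 0.
\end{equation*}

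Since $|v(x,t)-v(y,t)|=e^{-\operatorname{Re}(\lambda)t}|u(x)-u(y)|$, this rearranges to $|u(x)-u(y)|\leq 2\sqrt{2}\,Ce^{(\operatorname{Re}(\lambda)-\bar\lambda)t}w(d(x,y)/2)$ for every $x,y\in M$ and $t\geq 0$. If $\operatorname{Re}(\lambda)<\bar\lambda$, sending $t\to\infty$ forces $u$ to be constant, contradicting the non-constancy of $u$; hence $\operatorname{Re}(\lambda)\geq\bar\lambda$. The main technical obstacle is the qualitative analysis of the model eigenfunction $w$ for $a\neq 0$: without closed-form trigonometric expressions, the symmetry of the model operator and a Sturm--Liouville/Pr\"ufer argument are needed to ensure that $w$ admits the above normalization (odd, strictly monotonic on $(-D/2,D/2)$, Neumann at $\pm D/2$), which is in turn what makes $\phi$ a legitimate input to Theorem \ref{modulus.of.continuity.thm}.
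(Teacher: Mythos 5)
Your overall strategy matches the paper's (define $v=e^{-\lambda t}u$, compare against a one-dimensional model via the modulus-of-continuity principle, apply it separately to $\operatorname{Re}(v)$ and $\operatorname{Im}(v)$, and conclude from the decay rates), and the normalization and oddness of $w$ are handled correctly. However, there is a genuine gap in the verification of hypothesis (iii) of Theorem \ref{modulus.of.continuity.thm}, and your write-up is actually internally inconsistent on this point: you correctly note that the Neumann boundary condition forces $w'(\pm D/2)=0$, but then assert that (iii) ``follows directly from $w'>0$ on $(0,D/2]$.'' Those two statements cannot both hold. The Neumann eigenfunction necessarily has $\phi'=w'=0$ at the endpoint $s=D/2$, so $\phi'>0$ fails precisely where it is needed most: in the boundary step of the proof of Theorem \ref{modulus.of.continuity.thm}, one needs
$$
\Gamma_y(\Phi,\tilde\nu)(x_0,y_0)=\Gamma_y(v,\tilde\nu)-\phi'\bigl(d(x_0,y_0)/2\bigr)\Gamma_y(d,\tilde\nu)<0,
$$
and strictness there requires $\phi'>0$; if the maximum point $(x_0,y_0)$ is a diameter-realizing pair, the argument of $\phi'$ is exactly $D/2$ and the inequality degenerates.

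The paper handles this by \emph{not} using $w$ directly. Instead it takes the Neumann eigenfunction $\tilde w$ of $\partial_s^2-as\partial_s$ on a strictly larger interval $[-\tilde D/2,\tilde D/2]$, $\tilde D>D$, with eigenvalue $\tilde\lambda$. For this $\tilde w$ one shows $\tilde w'>0$ on all of $[0,D/2]$ (the zero of $\tilde w'$ has been pushed out to $\tilde D/2>D/2$), so $\tilde\phi(s,t)=Ce^{-\tilde\lambda t}\tilde w(s)$ is a legitimate input to Theorem \ref{modulus.of.continuity.thm}. Your decay argument then yields $\operatorname{Re}(\lambda)\geq\tilde\lambda$ for each $\tilde D>D$, and one concludes by letting $\tilde D\searrow D$, using continuous dependence of $\tilde\lambda$ on $\tilde D$ to get $\tilde\lambda\to\bar\lambda$. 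You should incorporate this approximation-and-limit step; without it, the application of Theorem \ref{modulus.of.continuity.thm} is not justified.
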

\begin{proof}
	One easily verifies that $w$ is the unique minimizer of the weighted energy functional
	$$
	F(\psi):=\frac{\int_{-D/2}^{D/2}e^{-a\frac{s^2}{2}}(\psi')^2}{\int_{-D/2}^{D/2}e^{-a\frac{s^2}{2}}(\psi)^2},
	$$
	among all smooth non-zero functions with zero mean. So $w$ is well-defined and smooth. It is a well-known fact that 
	$w$ can be chosen in a way such that $w(0)=0$ and such that $w$ is positive on $(0,D/2]$. The equation is invariant under the transformation $s\mapsto -s$ and $w$ changes its sign, so it follows that $w$ is odd. Because of the Neumann condition, we cannot directly apply {Theorem \ref{modulus.of.continuity.thm}}, so let $\tilde w$ be the associated solution on the interval $[-\tilde D/2, \tilde D/2]$ for $\tilde D>D$ with eigenvalue $\tilde \lambda$. The uniqueness of solutions of ordinary differential equations gives $\tilde w'(0)>0$, since otherwise $\tilde w''(0)=\tilde w'(0)=\tilde w(0)=0$, and hence $\tilde w\equiv 0$. At $\tilde D/2$, the Neumann condition implies that
	$\tilde w''(\tilde D/2)=-\tilde \lambda w(\tilde D/2)<0$, so we have that $\tilde w'>0$ on $(\tilde D/2-\epsilon,\tilde D/2)$ for some small $\epsilon>0$. Now we assume that there is an $s\in (0,\tilde D/2)$ such that $w'(s)=0$. By the considerations above, we can choose $s$ to be maximal. However, $\tilde w''(s)=-\lambda \tilde  w(s)<0$, since $w$ is positive in $(0,\tilde D/2]$, contradicting the fact that $s$ is maximal. Hence, we have that $\tilde w'|_{[0,\tilde D/2)}>0$, in particular, $\tilde w'_{[0,D/2]}>0$.\\
	Now we define the function $\tilde \phi(s,t)=Ce^{-\tilde \lambda t}\tilde w(s)$ and let $u$ be the eigenfunction of $L$ with eigenvalue $\lambda$ and define $v:=e^{-\lambda t}u$. Since $\tilde w'(0)>0$  and since the gradient of $u$ is uniformally bounded ($M$ is compact), $\phi(s,0)$ is a modulus of continuity of $\operatorname{Re}(u)$ and $\operatorname{Im}(u)$ for $C$ sufficiently large. Furthermore, $v$, and hence also $\operatorname{Re}(v)$ and $\operatorname{Im}(v)$, satisfy a heat equation with drift. $\phi$ obviously satisfies the other constraints of {Theorem \ref{modulus.of.continuity.thm}}, so we get that $\phi(\cdot,t)$ is a modulus of continuity of the real and imaginary part of $v(\cdot,t)$ for any $t\in\mathbb{R}_+$. Since $v$ is non-constant, this can only happen if $v$ has more decay than $\phi$, that is, $\operatorname{Re}(\lambda)\geq \tilde \lambda$. This proves the theorem since $\tilde \lambda \to \bar\lambda$ as  $\tilde D\searrow D$.    \end{proof}
\begin{proof}[Proof {Theorem \ref{maintheorem1}}] This follows from \cite[Proposition 3.1]{andrews}, which states that $\bar{\lambda}\geq \frac{a}{2}+\frac{\pi^2}{D^2}$. 
\end{proof}
We have $\frac12as\frac{\partial}{\partial s}(u'u')-(as\frac{\partial}{\partial s}u)'u'=-au'^2$, so the operator 
$\frac{\partial^2}{\partial s^2} -as\frac{\partial}{\partial s}$ satisfies $\operatorname{BE}(a,\infty)$. Therefore, the result in {Theorem \ref{nonsymmetric.final.thm}} is sharp. By constructing collapsing warped product manifolds similar to the previous section, one can see that the estimate is  sharp even if we restrict ourselves to an arbitrary dimension. However, as can be seen in \cite{andrews}, {Theorem \ref{maintheorem1}}  is not sharp, even in the smaller class of operators which can be written as a Bakry-Emery Laplacian. The reason is that there is no good understanding of the principal eigenvalue of the model function.

\end{document}